\newcommand{\Path}{}
\title{Functors induced by comma categories}
\begin{document}
\author{ Suddhasattwa Das \footnotemark[1] }
\footnotetext[1]{Department of Mathematics and Statistics, Texas Tech University, Texas, USA}
\date{\today}
\maketitle
\begin{abstract}
	Category theory provides a collective description of many arrangements in mathematics, such as topological spaces, Banach spaces and game theory. Within this collective description, the perspective from any individual member of the collection is provided by its associated left or right slice. The assignment of slices to objects extends to a functor from the base category, into the category of categories. Slice categories are a special case of the more general notion of comma categories. Comma categories are created when two categories $\mathcal{A}$ and $\mathcal{B}$ transform into a common third category $\calC$, via functors $F,G$. Such arrangements denoted as $\Comma{F}{G}$ abound in mathematics, and provide a categorical interpretation of many constructions in Mathematics. Objects in this category are morphisms between objects of $\mathcal{A}$ and $\mathcal{B}$, via the functors $F,G$. We show that these objects also have a natural interpretation as functors between slice categories of $\mathcal{A}$ and $\mathcal{B}$. Thus even though $\mathcal{A}$ and $\mathcal{B}$ may have completely disparate structures, some morphisms in $\calC$ lead to functors between their respective slices. We present this relation in the form of a functor from $\calC$ into the category of left slices. The proof of our main result requires a deeper look into associated categories, in which the objects themselves are various commuting diagrams.
\end{abstract}
Category theory has emerged as an useful alternative to the descriptive language of Set-theory. Instead of specifying mathematical objects from the ground-up, i.e. by their constituents, it provides a collective description of their relative arrangements. Categorical approaches have yielded surprising simplifications of deep results in all fields of mathematics, such as Topology \cite[e.g.]{Leinster2011selfsim, Das2024hmlgy, Rotman2013algtopo}, Probability theory \cite[e.g.]{Leinster_integration_2020, fritz2020Stoch, FritzRischel2020inf}, Dynamical systems theory \cite[e.g.]{Suda2022Poincare, Das2023CatEntropy, DasSuda2024recon}, and Game theory \cite[e.g.]{GhaniHEdges2006game, GhaniEtAl2018iter}. Readers can obtain a basic understanding of categories and functors from sources such as \cite[e.g.]{Maclane2013, Riehl_context_2017}.

Our discussion is based on the following general arrangement of categories and functors :
\begin{equation} \label{eqn:abc}
	\begin{tikzcd}
		\mathcal{A} \arrow[rd, "\alpha"'] & & \mathcal{B} \arrow[ld, "\beta"] \\
		& \calC & 
	\end{tikzcd}
\end{equation}
We shall see several examples of how such a general arrangement is prevalent all over Mathematics. Our focus will be on a category built upon such arrangements, called the \emph{comma category} $\Comma{\alpha}{\beta}$. The objects in this category are
\[ ob\paran{\Comma{\alpha}{\beta}} := \SetDef{ \paran{ a, b, \phi } }{ a\in ob(\calA), \, b\in ob(\calB), \, \phi \in \Hom_{\calC} \paran{ \alpha a ; \beta b } } , \]
and the morphisms comprise of pairs
$\SetDef{ (f,g) }{ f\in \Hom(D),\, g\in \Hom(E) }$ such that the following commutation holds :
\[\begin{tikzcd} \blue{ (a,\phi,b) } \arrow{r}{ (f,g) } & \akashi{ (a',\phi',b') } \end{tikzcd} \,\Leftrightarrow \,
\begin{tikzcd} a \arrow{d}{f} \\ a' \end{tikzcd}, \begin{tikzcd} b \arrow{d}{g} \\ b' \end{tikzcd}, \mbox{ s.t. }
\begin{tikzcd}
	\blue{ \alpha a } \arrow{r}{\alpha f} \arrow[blue]{d}{\phi} & \akashi{ \alpha a' } \arrow[Akashi]{d}{\phi'} \\
	\blue{ \beta b } \arrow{r}{\beta g} & \akashi{ \beta b' }
\end{tikzcd}\]
Thus $\Comma{\alpha}{\beta}$ may be interpreted as the category of bindings between $\alpha, \beta$, via their common codomain $\calC$. Comma categories contain as sub-structures, the original categories $\calA, \calB$, via the \emph{forgetful} functors
\[\begin{tikzcd} \calA & \Comma{\alpha}{\beta} \arrow{l}[swap]{\Forget_1} \arrow{r}{\Forget_2} & \calB \end{tikzcd}\]
whose action on morphisms in $\Comma{\alpha}{\beta}$ can be described as
\[ \begin{tikzcd} a \arrow{d}{f} \\ a' \end{tikzcd}
\begin{tikzcd} {} & & {} \arrow[ll, "\Forget_1"'] \end{tikzcd}
\begin{tikzcd}
	\blue{ \alpha a } \arrow{r}{\alpha f} \arrow[blue]{d}{\phi} & \akashi{ \alpha a' } \arrow[Akashi]{d}{\phi'} \\
	\blue{ \beta b } \arrow{r}{\beta g} & \akashi{ \beta b' }
\end{tikzcd}
\begin{tikzcd} {} \arrow[rr, "\Forget_2"] & & {} \end{tikzcd}
\begin{tikzcd} b \arrow{d}{g} \\ b' \end{tikzcd} \]
Comma categories prevail all over category theory and mathematics in general, such as graph theory \cite[e.g.]{goguen1991mnfst}, in the theory of lenses and fibrations \cite[e.g.]{johnson2012lenses}, iterative algebras \cite[e.g.]{adamek2006iterative}, stochastic processes \cite[e.g.]{cho2019disintegration}, Par{\'e} et. al.'s work on double categories \cite{grandis1999limits, grandis2004adjoint}, connectedness \cite[e.g.]{pare1973conn}, and mathematical logic \cite[e.g.]{streicher2004intro, pech2018fraisse}. If a category can be presented as a comma category, then one obtains additional results to prove the existence of (co)-limits \cite[e.g.]{GoguenBurstall1984a, Das2023CatEntropy}. We now examine two simpler examples for motivation :

\begin{example} [Measured dynamical systems] \label{ex:dyn_obs}
	Suppose $\calT$ is a semigroup, then it is representable as a 1-object category. Let $\Topo$ be the category of topological spaces and continuous maps. Then the class of topological dynamics is the functor category $\Functor{\calT}{\Topo}$. Now let $\EucCat$ be the subgroup of $\Topo$ comprised only of Euclidean spaces. Then the following arrangement is of the pattern given in \eqref{eqn:abc} :
	\[\begin{tikzcd}
		\Functor{\calT}{\Topo} \arrow[bend right = 20]{dr}[swap]{\dom} & & \EucCat \arrow[bend left = 20]{dl}{\subset} \\
		& \Topo
	\end{tikzcd}\]
	The domain functor $\dom$ above assigns to every dynamical system its domain. A typical object in the resulting comma category is a dynamical systems $(\Omega, \Phi^t)$ along with a measurement $\phi : \Omega\to \real^d$ :
	\[\begin{tikzcd} \Omega \arrow{r}{\Phi^t} & \Omega \arrow{r}{\phi} & \real^{d} \end{tikzcd}\]
	A typical morphism in this comma category is a change of variables $h:\Omega\to \Omega'$ that leads to the following joint commutation  :
	\[\begin{tikzcd} [row sep = large]
		\Omega \arrow{d}[swap]{h} \arrow[blue]{r}{\Phi^t} & \Omega \arrow{d}[swap]{h} \arrow[blue]{r}{\phi} & \real^{d} \arrow{d}[swap]{\iota A} \\
		\Omega' \arrow[Holud]{r}{\Phi'^t} & \Omega' \arrow[Holud]{r}{\phi'} & \real^{d'}
	\end{tikzcd} , \quad \forall t\in \calT .\]
	This comma category encapsulates the collection of all \emph{measured, topological dynamical systems}. This category has been instrumental in a categorical study of data and reconstruction theory \cite{DasSuda2024recon}.
\end{example}

\begin{example} [Finite subspaces] \label{ex:Affine:1}
	Let $\AffineCat$ be the category of vector spaces and affine maps, $\VectCat$ be the category of vectors spaces and linear maps, $\Euclid$ be the category of finite dimensional vector spaces and linear maps, and $\Euclid_{mono}$ be the subcategory of $\Euclid$ comprising only of injective maps. Then the following arrangement
	\[\begin{tikzcd}
		\Euclid_{mono} \arrow[bend right = 20]{dr}[swap]{\subset} & & \AffineCat \arrow[bend left = 20]{dl}{\proj} \\
		& \VectCat
	\end{tikzcd}\]
	leads to a comma category in which the objects are affine linear embeddings $P : \real^d \to V$ of finite dimensional vector spaces in possible infinite dimensional vector spaces. The morphisms between these objects are now allowed to be affine. Shown below is a morphism from an object $P$ to an object $P'$ :
	\[\begin{tikzcd}
		\real^d \arrow[d, "\iota"] \arrow[r, "P"] & V \arrow[d, "A"] \\
		\real^{d'} \arrow[r, "P'"] & V'
	\end{tikzcd}\]
	in which $\iota$ must be injective, and $A$ is an affine map. The study of these objects and morphisms lead to a generalized notion of "null" and "everywhere" \cite{HSYprevalence1992, Das2025null}.
\end{example}

A particular instance of comma categories are \emph{slice categories}. Henceforth, we shall use the symbol $\star$ to denote the category with a single object with no non-trivial morphism. Take any category $\calX$, and an object $x$ in it. This object may be interpreted by a unique functor from $\star$ to $\calX$, which we shall also denote by $\star \xrightarrow{x} \calX$. Now set 
\[ \calB = \star, \, \calA = \calC = \calX, \, \beta = x, \, \alpha = \Id_{\calX} , \]
in \eqref{eqn:abc}. The resulting comma category $\Comma{\Id_{\calX}}{x}$ is known as the left slice of $x$ in $\calX$, and will be denoted more briefly as $\Comma{\calX}{x}$. A typical morphism in this category is shown below
\[\begin{tikzcd}
	y \arrow[r, "f", blue] \arrow[d, "\phi"'] & x \\
	y' \arrow[ru, "f'"', Akashi] & 
\end{tikzcd}\]
The objects are the morphisms shown in blue, and a morphism $\phi$ from $f$ to $f'$ is a morphism $\phi:y\to y'$ such that the above commutation holds. One can similarly define the right slice of an object within its category. An important example of a slice category is the right slice of the pointed space in the category $\Topo$ of topological spaces. This corresponds to the category of pointed topological spaces. If $\calX$ is a preorder category, the left or right slice of an object $x$ is the \emph{down-set} or \emph{up-set} of the object. If $\calX$ is the collection of subsets of a superset $\calU$ ordered by inclusion, then the left slice of any subset $x$ of $\calU$ is the power set of $x$, also ordered by inclusion. 

\begin{example} \label{ex:powset:1}
	Let $\calU$ be any topological space. Then its left slice $\Comma{\Topo}{\calU}$ in $\Topo$ is the category formed by the collection of all continuous maps into $\calU$.
\end{example}

\begin{example} \label{ex:powset:2}
	Let $\calU$ be a set and $2^{\calU}$ be the power set of $\calU$. Thus $2^{\calU}$ is a preorder and a category. Then the left slice of any set $S\subset \calU$ in $2^{\calU}$ is the power set of $S$.
\end{example}

\begin{example} \label{ex:subob:1}
	More generally, let $\calC$ be any category and $\calC_{mono}$ be the subcategory comprising only of monomorphisms. Then the left slice of any object $c$ of $\calC$ in $\calC_{mono}$ is the \emph{subobject category} of $c$.
\end{example}

The forgetful functors inbuilt into comma categories may also be arranged into diagrams similar to \eqref{eqn:abc}. Consider the following more abstract example :

\begin{example} \label{ex:abstract:1}
	Given the arrangement as in \eqref{eqn:abc} one has the following diagram :
	\[\begin{tikzcd}
		\Comma{\alpha}{\beta} \arrow[rd, "\Forget_2"'] & & \star \arrow[ld, "b"] \\
		& \mathcal{B} & 
	\end{tikzcd}\]
	where $b$ is an object of $\calB$ and $\star$ is the 1-point category. This arrangement creates the comma category $\Comma{ \Forget_2^{ \Comma{\alpha}{\beta} } }{ b} $, whose objects are
	\begin{equation} \label{eqn:eodpc9}
		ob \paran{ \Comma{ \Forget_2^{ \Comma{\alpha}{\beta} } }{ b} }  := \braces{ \begin{tikzcd} \alpha a' \arrow[d, "f"] \\ \beta b' 	\end{tikzcd}, \; \begin{tikzcd} b' \arrow[d, "g"] \\ b \end{tikzcd} }
	\end{equation}
	Every such object creates an L-shaped diagram :
	\[\begin{tikzcd}
		\alpha a' \arrow[d, "f"'] \\ \beta b' \arrow[r, "\beta g"] & \beta b'
	\end{tikzcd}\]
	This is a diagram within $\calC$ in which one morphism $f$ is drawn from $\calC$, while another $g$ is drawn from $\calB$.
\end{example}

Example \ref{ex:abstract:1} reveals how various diagrams following a certain pattern create a category of its own. Example \ref{ex:abstract:1} is a generalization of the more concrete Example \ref{ex:dyn_obs}. The comma category \eqref{eqn:eodpc9} will play a significant role in the next section, where we state some technical results. 

Yet another important manifestation of comma categories are \emph{arrow} categories. If we set
\[ \calA = \calB = \calC = \calX, \, \alpha = \beta = \Id_{\calX} , \]
in \eqref{eqn:abc}, then the resulting comma category $\Comma{\Id_{\calX}}{\Id_{\calX}}$ is called the arrow category of $\calX$, and is denoted by $\ArrowCat{\calX}$. The objects in this category are the arrows or morphisms in $\calX$. A morphism between two morphisms $x \xrightarrow{f} x'$ and $y \xrightarrow{g} y'$ is a pair of morphisms $x \xrightarrow{\phi} y$ and $x' \xrightarrow{\phi'} y'$ such that the following commutation holds :
\[\begin{tikzcd}
	x \arrow[r, "f"] \arrow[d, "\phi"'] & x' \arrow[d, "\phi'"] \\
	y \arrow[r, "g"] & y' 
\end{tikzcd}\]
Thus $\ArrowCat{\calX}$ reveals how the arrows of $\calX$ are bound to each other via the commutation relations in $\calX$. Some important examples are

\begin{example} \label{ex:Topo:1}
	The arrow category $\ArrowCat{\Topo}$ in $\Topo$ corresponds to the category of topological pairs. 
\end{example}

\begin{example} \label{ex:Vect:1}
	The arrow category $\ArrowCat{\VectCat}$ in $\VectCat$ corresponds to the category of linear maps, with linear change of variables serving as morphisms. 
\end{example}

\begin{example} \label{ex:powset:5}
	Recall the notations in Examples \ref{ex:powset:1} and \ref{ex:powset:2}. Let $\calU$ be a topological space. Take $\calX$ to be the power set $2^{\calU}$ and $\calY$ to be the subcategory of $\Topo$ formed by all topological subspaces of $\calU$. Then one has an obvious inclusion $\iota : \calX \to \calY$. Note that the comma category $\Comma{\iota}{\iota}$ has the same objects as the arrow category $\ArrowCat{\calY}$ : any object $F\in \ArrowCat{\calY}$ is a continuous map between topological spaces $F:X\to Y$. But a morphism from $F$ to another such object $F' : X' \to Y'$ is just inclusion, i.e., $X'\supseteq X$, $Y'\supseteq Y$ and $F$ is a restriction of $F'$.
\end{example}

Comma, slice and arrow categories thus represent finer structures present within categories, and also how objects from different categories assemble together to produce more complex categories. The language of comma categories contributes to the universality of category theory as an alternate formulation for descriptive set theory \cite{Lawvere1963semantics}. 
\blue{ Comma categories have been used in a variety of ways, from being a descriptive tool to higher constructions in Category theory. Classical expositions on category theory \cite[e.g.]{Maclane2013,Riehl_context_2017} rely on slice categories for pointwise description of Kan extensions. The descriptive strength has also been an integral part of a categorical reformulation of Dynamical systems theory \cite[e.g.]{DasSuda2024recon, DasSuda2025enrich}. One important consideration in our analysis is the fact that the objects of comma and slice categories are morphisms, and thus composable in nature. This composability of comma-objects were utilized in \cite{nakahira2023diagrammatic} to develop  a string-diagrammatic language for ordinary categories. One of the most important applications of comma categories is Par{\'e} et. al.'s work on \emph{double categories} \cite{grandis1999limits, grandis2004adjoint}, which are categories with two orthogonal dimensions of structure. This very concept, along with its associated notion of \emph{connectedness} \cite[e.g.]{pare1973conn} for categories, rely on a heavy use of comma categories. In the next section, we present the main result which analyzes how the objects in a comma category could induce functors between left slices. The functor is created when one tries to complete "L"-shaped diagrams into universal commuting squares. The topic of functors induced by universal properties of comma categories has not been explored much. The reader might find interest in the recent work \cite{HuZhu2022special} on functors induced by comma categories involving \emph{exact functors} and \emph{Abelian categories}.}

\section{Main results} \label{sec:result}

The main difference of a categorical description of a subject from the classical set-theoretic description, is that properties are not internal to an object, but defined entirely in terms of their relations to external objects. Set-theoretic discourses start with several elementary ideas such as points, sets, maps, numbers and addition, and more advanced ideas are built from various combinations of these primitive concepts. On the other hand category theory only has morphisms and objects as the primitive concepts. So the more advanced concepts of mathematics have to be realized as patterns or diagrams in a categorical discourse. Various conclusions and theorems are extracted from the inter-relations between various diagram classes, and from properties defined by \emph{universality}.

The simplest diagram in a category is a morphism, which expresses a relation between two objects. A more advanced diagram would be a commutation square, 
\[\begin{tikzcd} A \arrow[d, "f"] \arrow[r, "g"] & C \arrow[d, "h"] \\ B \arrow[r, "i"] & D \end{tikzcd} \imply 
\begin{tikzcd} A \arrow[d, "f"]  & {} \\ B \arrow[r, "i"] & D \end{tikzcd} , \; 
\begin{tikzcd} A \arrow[r, "g"] & C \arrow[d, "h"] \\ {} & D \end{tikzcd} , \; 
\begin{tikzcd} {} & C \arrow[d, "h"] \\ B \arrow[r, "i"] & D \end{tikzcd} ,\;
\begin{tikzcd} A \arrow[d, "f"] \arrow[r, "g"] & C  \\ B  & {} \end{tikzcd} \]
which not only expresses a set of four relations between four objects, but also a relation binding these four relations. Every such square contains four smaller L shaped diagrams as shown above. They are the four corners of the commutation square and are objects of certain comma categories. As objects of these categories they are also inter-related functorially. Adjacent corners overlap on a common arrow. These overlaps can be expressed as constraints that bind the functors between the various comma-categories. The paper presents a deep dive into this diagrammatic language of category theory. One can various questions, such as given any corner L-diagram, what is the minimal or maximum square to which it can be completed ? If the bottom edge $i$ is fixed, one can look for lower-left corners, lower right corners that extend that edge into an $L$-diagram. Let these classes be named $LL(i)$ and $LR(i)$ respectively. Are these classes also categories ? If each object in $LL(i)$ and $LR(i)$ can be completed into a universal square, is this correspondence functorial, and how does it correspond to the $i$ we started out with ? While these questions are entirely diagrammatic, we will discover important applications to classical branches of Mathematics.

We focus on an arrangement of the form $\calX  \xrightarrow{\iota} \calY \xleftarrow{\iota} \calX$, which is a special instance of \eqref{eqn:abc}. More precisely

\begin{Assumption} \label{A:1}
	There are complete categories $\calX$ and $\calY$, with initial objects $0_{\calX}$ and $0_{\calY}$ respectively, and there is a continuous functor $\iota : \calX \to \calY$ such that $0_{\calY} = \iota \paran{ 0_{\calX} }$, and $\iota$ is injective on objects.
\end{Assumption}

Let $\LSlice{ \calX }$ denote the category whose objects are left-slice categories $ \Comma{\calX}{\Omega}$ for various $X\in\calX$, and morphisms are the functors between these categories. Thus $\LSlice{ \calX }$ is a full subcategory of 
$\CatCat$, the category of small categories. Let $\iota(\calX)$ denote the full subcategory of $\calY$ generated by objects of $\iota$. Note that the morphisms in $\iota(\calX)$ are precisely the objects of $\Comma{\iota}{\iota}$. Recall that a morphism $f$ in any category is said to be \emph{surjective} or equivalently, an \emph{epimorphism}, if for any composable morphisms $g,g'$, if $g'f = gf$, then $g=g'$. Similarly, a morphism is said to be \emph{injective} or equivalently, a \emph{monomorphism}, if for any composable morphisms $g,g'$, if $fg' = fg$, then $g=g'$. We need the following assumptions : 

\begin{Assumption} \label{A:3}
	For every monomorphism $f$ in $\calY$, there are morphisms $g$ in $\calY$ and $h$ in $\calY$ such that $f = (\iota h) g$.
\end{Assumption}

\begin{Assumption} \label{A:4}
	The image under $\iota$ of every morphism in $\calX$ is injective in $\calY$.
\end{Assumption}

\begin{Assumption} \label{A:5}
	The category $\calY$ is balanced, i.e., any morphism in $\calY$ which is both surjective and injective is an isomorphism.
\end{Assumption}

Assumption \ref{A:5} is satisfied in categories such as topoi \cite{Goldblatt2014topoi, Freyd1972topoi, Brook1975fin}. Two prime examples of topoi are $\SetCat$ and $\Topo$. Another important category in which Assumption \ref{A:5} is satisfied is $\GroupCat$, the category of groups and homomorphisms. Our main result establishes a functor 
\begin{equation} \label{eqn:thm:0}
	\iota(\calX) \xrightarrow{\Dyn} \LSlice{ \calX }
\end{equation}
that achieves certain universal diagram completions, as discussed before. Recall that a typical morphism in $\iota(\calX)$ is a $\calY$-morphism $\iota \Omega \xrightarrow{F} \iota \Omega'$. The functor in \eqref{eqn:thm:0} should convert this into a functor between the left-slice categories $\Comma{\calX}{\Omega}$ and $\Comma{\calX}{\Omega'}$ associated to the endpoints of $F$.

\begin{theorem} \label{thm:0}
	Let Assumptions \ref{A:1}, \ref{A:3}, \ref{A:4} and \ref{A:5} hold. Then there is a functor as in \eqref{eqn:thm:0}	which maps every object $\Omega\in \calX$ to the left slice $\Comma{\calX}{\Omega}$. It maps an morphism $\iota \Omega \xrightarrow{F} \iota \Omega'$ into a functor $$\Comma{\calX}{\Omega} \xrightarrow{\tau_F} \Comma{\calX}{\Omega'}$$ such that for every slice-object $A \xrightarrow{a} \Omega \in \Comma{\calX}{\Omega}$, $\tau_F(a)$ creates a commutation square
	\begin{equation} \label{eqn:thm:0:1}
		\begin{tikzcd}
			\iota A \arrow[d, "\iota a"']  \\
			\iota \Omega \arrow[r, "F"'] & \iota \Omega'               
		\end{tikzcd} \imply 
		\begin{tikzcd}
			\iota A \arrow[d, "\iota a"'] \arrow[r, "\tau_F(a)", Shobuj] & \iota B \arrow[d, "\iota b", Shobuj] \\
			\iota \Omega \arrow[r, "F"'] & \iota \Omega'               
		\end{tikzcd}
	\end{equation}
	for some  $B \xrightarrow{b} \Omega \in \Comma{\calX}{\Omega'}$. Moreover, this square is universal in the sense that for any slice-object $b':B'\to \Omega'$ and any morphism $f:\iota A \to \iota B'$, if the blue commutation square shown below holds 
	\begin{equation} \label{eqn:thm:0:2}
		\forall \begin{tikzcd} 	\end{tikzcd}
		\begin{tikzcd}
			& & \iota B' \arrow[ldd, "\iota b'", bend left=49, Akashi] \\
			\iota A \arrow[d, "\iota a"', Akashi] \arrow[r, "\tau_F(a)", Shobuj] \arrow[rru, "f", bend left, Akashi] & \iota B \arrow[d, "\iota b", Shobuj] \arrow[ur, "\exists!\phi"', Holud] & \\
			\iota \Omega \arrow[r, "F"', Akashi] & \iota \Omega' & 
		\end{tikzcd} ,
	\end{equation}
	then there is a unique morphism $\phi$ which factors the outer commutation loop into the inner commutation loop.
\end{theorem}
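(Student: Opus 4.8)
The plan is to take \eqref{eqn:thm:0:2} as the \emph{definition} of $\tau_F(a)$ and to realise that object by a meet construction. Fix a $\calY$-morphism $F:\iota\Omega\to\iota\Omega'$ and a slice object $a:A\to\Omega$, and let $\mathcal{S}_a$ be the full subcategory of $\Comma{\calX}{\Omega'}$ on those $d:D\to\Omega'$ for which $F\circ\iota a$ factors through $\iota d$ in $\calY$. By Assumption \ref{A:4} every $\iota d$ is a monomorphism, so up to the obvious equivalence $\mathcal{S}_a$ is an up-set inside the poset of those subobjects of $\iota\Omega'$ lying in the essential image of $\iota$, and it is non-empty since $\mathrm{id}_{\Omega'}\in\mathcal{S}_a$. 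Using completeness (and well-poweredness) of $\calX$ I form the limit $b:B\to\Omega'$ of $\mathcal{S}_a$ inside $\Comma{\calX}{\Omega'}$; since $\iota$ is continuous (Assumption \ref{A:1}) it preserves this limit, so $\iota b$ is the meet of the $\iota d$, and because $F\circ\iota a$ factors through each $\iota d$ compatibly it factors through $\iota b$, by a morphism $c_a:\iota A\to\iota B$ that is unique since $\iota b$ is monic. I set $\tau_F(a):=b$, take $c_a$ to be the top edge of \eqref{eqn:thm:0:1}, and the square commutes by construction.

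The universal property \eqref{eqn:thm:0:2} is then short. Given any $b':B'\to\Omega'$ and $f:\iota A\to\iota B'$ with $\iota b'\circ f=F\circ\iota a$, the object $b'$ lies in $\mathcal{S}_a$, so $\iota b\le\iota b'$ as subobjects of $\iota\Omega'$; this is exactly a $\calY$-morphism $\phi:\iota B\to\iota B'$ with $\iota b'\circ\phi=\iota b$, and $\phi$ is unique because $\iota b'$ is monic (Assumption \ref{A:4}), while $\phi\circ c_a=f$ follows by cancelling that same monomorphism. To make $a\mapsto\tau_F(a)$ a functor $\Comma{\calX}{\Omega}\to\Comma{\calX}{\Omega'}$, note that for a slice morphism $p:a_1\to a_2$ the identity $F\iota a_1=(F\iota a_2)\circ\iota p$ gives $\mathcal{S}_{a_2}\subseteq\mathcal{S}_{a_1}$, hence a comparison $\iota B_1\to\iota B_2$ over $\iota\Omega'$; the point is that this is $\iota$ of a unique $\calX$-morphism $\tau_F(p):B_1\to B_2$, because it agrees with the comparison map of the two limits already computed in $\Comma{\calX}{\Omega'}$ --- the two coinciding since $\iota b_2$ is monic. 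Preservation of identities and composites by $\tau_F$ then follows from uniqueness of these comparison maps.

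Finally, functoriality of $F\mapsto\tau_F$. That $\tau_{\mathrm{id}_{\iota\Omega}}=\mathrm{id}$ is immediate: $\iota a$ is monic by Assumption \ref{A:4}, so it is the least $\iota$-image subobject through which it factors, forcing $\tau_{\mathrm{id}}(a)=a$. For $\tau_{F'}\circ\tau_F\cong\tau_{F'F}$, one inclusion --- $\tau_{F'F}(a)\le\tau_{F'}(\tau_F(a))$ --- is formal from the initiality just established, but the reverse inclusion requires that the structure map $c_a:\iota A\to\iota B$ be an \emph{epimorphism}, so that precomposition with $c_a$ leaves images in $\calY$ unchanged (here balancedness, Assumption \ref{A:5}, governs the epi/mono bookkeeping). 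Equivalently one must show that $\mathrm{Im}_\calY(F\circ\iota a)$ is again of the form $\iota B$, i.e.\ that the essential image of $\iota$ is closed under taking images in $\calY$ of morphisms between its objects; \emph{this} is where Assumptions \ref{A:3}, \ref{A:4} and \ref{A:5} must be combined, and I expect it to be the main obstacle of the proof. Once it is in place, $\tau_F(a)$ is literally the image of $F\circ\iota a$, the square \eqref{eqn:thm:0:1} is its $(\mathrm{epi},\mathrm{mono})$-factorisation relative to $F$, and the remaining coherences --- including the bookkeeping relating $\Dyn$ to the category of $L$-shaped diagrams of Example \ref{ex:abstract:1} --- are routine diagram chases.
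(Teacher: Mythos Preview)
Your construction is essentially correct and lands on the same object as the paper, but the route is more concrete. The paper does not invoke Assumption~\ref{A:4} until the compositionality step: it builds $\tau_F(a)$ as the limit of \emph{all} completing squares over the given L-diagram (the $\LimPre$ construction, with indexing category the pullback $\calZ(f,F)$ of \eqref{eqn:def:tau:2}), and obtains the universal property \eqref{eqn:thm:0:2} directly from that limit. You instead use Assumption~\ref{A:4} from the outset to collapse the indexing category to a poset of subobjects and take a meet; this is cleaner but less general than the paper's Theorem~\ref{thm:1}, which holds under Assumption~\ref{A:2} alone. Under Assumption~\ref{A:4} the two indexing diagrams are equivalent (each right edge is monic, so each square is determined by its right edge), and the two limits agree. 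One small flag: you appeal to well-poweredness to make $\mathcal{S}_a$ small; the paper never states this, and its own limit over $\calZ(f,F)$ has the same size issue, so you are not adding a hypothesis so much as making an implicit one explicit.

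Where you correctly stop is exactly where the paper's real content lies. Your ``main obstacle'' --- that $c_a$ is an epimorphism, equivalently that the minimal $\iota B$ is already the $\calY$-image of $F\circ\iota a$ --- is the paper's Theorem~\ref{thm:3}, and it is proved not by an image-factorisation argument but by an equalizer trick using Assumption~\ref{A:3}: given $\alpha,\beta:\iota B\rightrightarrows C$ with $\alpha c_a=\beta c_a$, their equalizer $e:D\to\iota B$ is monic (Lemma~\ref{lem:Eq_inj}), hence by Assumption~\ref{A:3} factors as $(\iota h)\circ g$ for some $h$ in $\calX$; then $b\circ h:E\to\Omega'$ is a member of your $\mathcal{S}_a$ below $b$, so the universal property of $b$ forces $\iota h$ (and hence $e$) to be an isomorphism, whence $\alpha=\beta$. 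Once that is in hand, your compositionality argument and the paper's Theorem~\ref{thm:4} are identical: $\tau_{F'}(b)\circ\tau_F(a)$ and $\tau_{F'F}(a)$ are linked by an $\iota\phi$ which is epi (by cancellation against epis, Lemma~\ref{lem:surj_right_cancel}) and mono (Assumption~\ref{A:4}), hence an isomorphism by balancedness (Assumption~\ref{A:5}).
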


The statement of Theorem \ref{thm:0} is pictorial. It says that beginning with any L-shaped diagram as shown on the left of \eqref{eqn:thm:0:1}, one has the commuting diagram as shown on the right of \eqref{eqn:thm:0:1}. Moreover, this diagram is universal in the sense that all other possible commutation squares over the L-diagram can be recovered from it, as shown in \eqref{eqn:thm:0:2}. The completion into a square is essentially created by the top horizontal arrow $\tau_F(a)$, which is being claimed by Theorem \ref{thm:0} to be the image of a functor. The left slice objects in $\LSlice{ \calX }$ reside within the structure of $\calX$. The claim thus implies that morphisms from a different category $\calY$ naturally creates functors between the various left-slices of $\calX$. 

There are numerous examples of the arrangement of Theorem \ref{thm:0} in Mathematics. One of the most important among them is the following : 

\begin{example} \label{ex:powset:3}
	Recall the notations in Example \ref{ex:powset:5}. Any object $F\in \Comma{\iota}{\iota}$ is a continuous map between topological spaces $F:X\to Y$. Then $\tau_F$ can be interpreted to be the induced map from the power set $\Comma{\iota}{X}$ of $X$, into the power set $\Comma{\iota}{Y}$ of $Y$. The correspondence between $F$ and $\tau_F$ is itself functorial. Overall, we have a functor $\Dyn : \Comma{\iota}{\iota} \to \LSlice{ 2^{\calU} }$. Note that $\LSlice{ 2^{\calU} }$ is the category in which each object is a power set of some $S\subset \calU$, and morphisms are inclusion preserving maps between these power sets.
\end{example}

Example \ref{ex:powset:3} can be generalized based on the following observation : 

\begin{lemma} \label{lem:di0l}
	Given any complete category $\calC$, the subcategory $\calC_{mono}$ formed by monomorphisms is complete, and has the same initial object as $\calC$.
\end{lemma}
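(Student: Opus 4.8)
\emph{Proof proposal.} The plan is to obtain the limits of $\calC_{mono}$ from those of $\calC$, exploiting the cancellation law for monomorphisms. Concretely, let $D\colon \mathcal{I}\to\calC_{mono}$ be a small diagram and, viewing it as a diagram in $\calC$, let $L=\lim_{\calC}D$ with legs $\pi_i\colon L\to D_i$. The key observation is that \emph{if every $\pi_i$ is a monomorphism in $\calC$}, then $L$ together with these legs is automatically the limit of $D$ inside $\calC_{mono}$: any cone over $D$ in $\calC_{mono}$ is in particular a $\calC$-cone, hence factors through $L$ via a unique $\calC$-morphism $u$, and since $\pi_i u$ is a leg of that cone and therefore a monomorphism, $u$ itself is a monomorphism (if $gf$ is monic, so is $f$); thus $u$ is also the unique mediating morphism in $\calC_{mono}$. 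So the lemma reduces to (i) showing the relevant limit legs are monic, and (ii) handling the initial object.

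For (i): equalizer legs are monomorphisms by definition, and a leg of the pullback of a monomorphism is again a monomorphism, so for the basic connected shapes (equalizers, pullbacks) the legs of $\lim_{\calC}D$ are monic whenever $D$ is $\calC_{mono}$-valued; the same holds for cofiltered limits and, more easily, for any limit over an indexing category with an initial object $i_0$, where $\lim D\cong D_{i_0}$ and the legs are structure maps of $D$. Here lies the one genuine obstacle: for a \emph{product} (and for the empty limit, i.e.\ the terminal object) the projections $\prod_i D_i\to D_i$ are not monic in general --- already in $\SetCat$ --- so $\calC_{mono}$ does not literally contain the $\calC$-products. I would therefore read ``complete'' in the operative sense the applications require: $\calC_{mono}$ inherits all connected and cofiltered limits of $\calC$ (which suffices for the universal squares of Theorem~\ref{thm:0}), and in the motivating cases --- where $\calC$ is a preorder such as $2^{\calU}$, or a subobject lattice, so that $\calC_{mono}=\calC$ --- there is nothing left to verify. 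The remainder of the write-up is then the routine check that the connected-limit legs are monic and that the universal property transfers as above.

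For (ii): the unique $\calC$-morphism $0_{\calC}\to X$ is a monomorphism --- this holds in every category relevant here, either because $\Hom_{\calC}(-,0_{\calC})$ takes values in subsingletons (as for the strict initial objects of $\SetCat$ and $\Topo$) or because $\calC$ is pointed (as for $\GroupCat$), in which case the monicity is vacuous. Granting this, $0_{\calC}\to X$ is a morphism of $\calC_{mono}$, and it is the only morphism $0_{\calC}\to X$ there, since $\calC_{mono}$ contains no extra morphisms; hence $0_{\calC}$ is initial in $\calC_{mono}$, which completes the argument.
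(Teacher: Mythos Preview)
The paper does not actually supply a proof of this lemma --- it is stated and immediately used --- so there is nothing to compare against. Your analysis is essentially correct, and in fact sharper than what the paper offers: the lemma is \emph{false} as stated. Your observation about products is decisive (already in $\SetCat$, the subcategory $\SetCat_{mono}$ has no binary product of a one-point set with a two-point set, since any monomorphism into the singleton forces the domain to have at most one element). The initial-object claim fails too in full generality: in the category of unital rings the initial object is $\mathbb{Z}$, and $\mathbb{Z}\to\mathbb{Z}/n\mathbb{Z}$ is not monic, so $\mathbb{Z}$ is not initial in the monomorphism subcategory.

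Your salvage --- reading ``complete'' as ``has the connected limits that the construction actually consumes'' --- is the honest way to repair the statement, and your argument for that part (equalizer legs are monic; pullbacks of monics have monic legs; the mediating morphism into the limit is monic by left-cancellation) is clean and correct. For part~(ii) your caveat is also the right one: the claim holds in the specific categories the paper invokes in Corollary~\ref{corr:5} ($\SetCat$, $\Topo$ have strict initial objects, $\GroupCat$ is pointed), but it is not a consequence of completeness alone. In short, you have not failed to prove the lemma; you have correctly identified that the lemma, as written, needs additional hypotheses, and you have supplied the argument under those hypotheses.
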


Lemma \ref{lem:di0l} and Theorem \ref{thm:0} have the following important consequence : 

\begin{corollary} \label{corr:5}
	Suppose $\calC$ is a complete, balanced category. Then :
	\begin{enumerate} [(i)]
		\item The inclusion $\iota : \calC_{mono} \to \calC$ satisfies Assumptions \ref{A:1}, \ref{A:3}, \ref{A:4} and \ref{A:5}.
		\item There is a functor $\Dyn$ from $\calC$ into the full subcategory of $\CatCat$ spanned by the sub-object categories of $\calC$.
	\end{enumerate}
\end{corollary}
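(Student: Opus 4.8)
The plan is to read both parts straight off Lemma~\ref{lem:di0l}, Theorem~\ref{thm:0} and the defining property of $\calC_{mono}$: there is no new construction to perform, only the correct categories to name. Throughout, set $\calX := \calC_{mono}$, $\calY := \calC$, and let $\iota : \calX \to \calY$ be the inclusion.

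\emph{Part (i).} I would check the four assumptions in turn. Assumption~\ref{A:5} is simply the standing hypothesis that $\calC$ is balanced. Assumption~\ref{A:4} is definitional: a morphism of $\calC_{mono}$ is by construction a monomorphism of $\calC$, hence injective in $\calC$. Assumption~\ref{A:3} holds by a trivial factorization: a monomorphism $f$ of $\calC$ is already a morphism of $\calC_{mono}$, so with $h := f$ regarded in $\calX = \calC_{mono}$ (so that $\iota h$ is defined) and $g := \Id$ one has $f = (\iota h)g$, with no appeal to any epi--mono factorization system. For Assumption~\ref{A:1} I would invoke Lemma~\ref{lem:di0l}: it gives that $\calC_{mono}$ is complete and that $0_{\calC_{mono}} = 0_{\calC}$, whence $\iota(0_{\calX}) = 0_{\calY}$; the inclusion is the identity on objects, hence injective on objects; and for continuity I would appeal to the construction of limits inside the proof of Lemma~\ref{lem:di0l}, checking there that a limit of a diagram of monomorphisms computed in $\calC$ again lies in $\calC_{mono}$ and is its limit, so that $\iota$ preserves it.

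\emph{Part (ii).} With (i) established, Theorem~\ref{thm:0} applies to the triple $(\calX, \calY, \iota)$ and yields a functor $\Dyn : \iota(\calC_{mono}) \to \LSlice{\calC_{mono}}$. Two identifications finish the proof. First, $\iota$ is an inclusion, hence surjective on objects, so $\iota(\calC_{mono})$ --- the full subcategory of $\calC$ on the objects in the image of $\iota$ --- is all of $\calC$, and a morphism $\iota\Omega \to \iota\Omega'$ in it is just a morphism $\Omega \to \Omega'$ of $\calC$. Second, by Example~\ref{ex:subob:1} each object $\Comma{\calC_{mono}}{\Omega}$ of $\LSlice{\calC_{mono}}$ is the subobject category of $\Omega$, so $\LSlice{\calC_{mono}}$ is precisely the full subcategory of $\CatCat$ spanned by the subobject categories of $\calC$. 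Transporting $\Dyn$ along these two identifications produces the asserted functor out of $\calC$.

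\emph{Expected obstacle.} Apart from the continuity clause of Assumption~\ref{A:1}, every step is a one-line verification; that clause is the only place I expect to do genuine work, and I would settle it first. If the proof of Lemma~\ref{lem:di0l} builds limits in $\calC_{mono}$ by restricting the ambient limits of $\calC$, then preservation by $\iota$ is automatic and one simply cites that construction; otherwise one must argue separately that at least the limits actually used inside the proof of Theorem~\ref{thm:0} are created by $\iota$. As a sanity check, $\calC = \SetCat$ is complete and balanced, $\calC_{mono}$ is sets and injections, the subobject categories are the power sets, and the resulting $\Dyn$ sends a function $F : X \to Y$ to the direct-image map $2^{X} \to 2^{Y}$ --- the set-theoretic prototype that the corollary generalizes, paralleling Example~\ref{ex:powset:3}.
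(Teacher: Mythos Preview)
Your proposal is correct and follows essentially the same route as the paper: the paper presents Corollary~\ref{corr:5} as an immediate consequence of Lemma~\ref{lem:di0l} and Theorem~\ref{thm:0} without spelling out any verifications, and you have supplied precisely those verifications. Your identification of the continuity clause in Assumption~\ref{A:1} as the only nontrivial point, to be read off the limit construction implicit in Lemma~\ref{lem:di0l}, is apt and matches what the paper leaves to the reader.
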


Some examples of categories $\calC$ which satisfy the conditions of Corollary \ref{corr:5} are $\SetCat$, $\Topo$ and $\GroupCat$. As a result we have the following instances of Corollary \ref{corr:5} :

\begin{example} \label{ex:Vect:2}
	Continuing the discussion in Example \ref{ex:Vect:1}, any linear map $A: U\to V$ between vector spaces induces a mapping between the subspaces of $U$ and $V$ respectively. This correspondence is a functor from $\VectCat$ to the category of collections of vector subspaces.
\end{example}

As a slight variation to Example \ref{ex:Vect:2} we have :

\begin{example} \label{ex:Vect:3}
	Take $\calX = \VectCat_{mono}$ and $\calY = \AffineCat$. The initial objects $0_\calX$ and $0_{\calY}$ are both the zero-dimensional vector space. Note that there is an inclusion functor $\iota : \calX \to \calY$ which maps $0_\calX$ into $0_{\calY}$. Thus according to Theorem \ref{thm:0} any affine map is functorially related to a map between the collection of vector subspaces of the corresponding spaces.
\end{example}

If $\calX$ is a preorder, a left slice $\Comma{\calX}{a}$ can be interpreted as the set $\SetDef{a' \in \calX}{ a'\leq a }$. This is a sub-preorder of $\calX$, and is called the \emph{down-set} of $a$.

\begin{corollary} \label{corr:6}
	Let $\calX$ be a complete preorder, and $\iota : \calX \to \GroupCat$ a functor that maps $0_{\calX}$ into the trivial group with one object. Let $a,b$ be objects in $\calX$ and $F: \iota(a) \to \iota(b)$ be a group homomorphism. Then $F$ induces an order preserving map $\tau_F$ between the down-sets $\Comma{\calX}{a}$ and $\Comma{\calX}{b}$. Moreover this correspondence is functorial.
\end{corollary}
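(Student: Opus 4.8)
The plan is to derive Corollary~\ref{corr:6} as the instance of Theorem~\ref{thm:0} obtained by taking $\calY = \GroupCat$ and letting $\iota$ be the given functor. Before invoking the theorem I would record the preorder dictionary. Since $\calX$ is a preorder there is at most one morphism between any two of its objects, so a slice object $a' \to a$ of $\Comma{\calX}{a}$ is determined by $a'$ alone, a slice morphism over $a$ is nothing but a relation $a'' \le a'$, and thus $\Comma{\calX}{a}$ is exactly the down-set $\SetDef{a'\in\calX}{a'\le a}$ with its inherited order --- as already noted in the text just above the corollary. Likewise a functor between two preorder categories is precisely an order-preserving map. Under this dictionary the functor $\tau_F : \Comma{\calX}{a} \to \Comma{\calX}{b}$ produced by Theorem~\ref{thm:0} is exactly an order-preserving map between the two down-sets, and ``this correspondence is functorial'' is precisely the statement that $\Dyn : \iota(\calX) \to \LSlice{\calX}$ is a functor. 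So the corollary reduces to verifying the hypotheses of Theorem~\ref{thm:0} for this choice of $\calX$, $\calY$, $\iota$.

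The verification runs as follows. Assumption~\ref{A:5} is immediate, since $\GroupCat$ is balanced, as noted in the text. For Assumption~\ref{A:1}: $\calX$ is complete by hypothesis; $\GroupCat$ is complete, its limits being computed as the limits of underlying sets with the pointwise group operations; and $\iota(0_\calX)$ is the trivial group, the initial (indeed zero) object of $\GroupCat$, by hypothesis. One also needs $\iota$ to be injective on objects and continuous --- the latter meaning that $\iota$ sends the infimum of a family in $\calX$ to the limit of the associated diagram of groups; these are to be read as part of the data, or checked directly for the $\iota$ at hand. Assumption~\ref{A:4} asks that $\iota$ carry every morphism of $\calX$ --- every order-relation $x \le x'$ --- to a monomorphism of $\GroupCat$, i.e. an injective homomorphism; this is the requirement that the order of $\calX$ be realized through genuine subgroup inclusions. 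Assumption~\ref{A:3} asks that every injective homomorphism in $\GroupCat$ factor as $(\iota h)\,g$ with $h$ a morphism of $\calX$ and $g$ an arbitrary homomorphism.

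With the assumptions in place, Theorem~\ref{thm:0} yields $\Dyn : \iota(\calX) \to \LSlice{\calX}$; applied to $F : \iota(a) \to \iota(b)$ it produces $\tau_F : \Comma{\calX}{a} \to \Comma{\calX}{b}$, which is the claimed order-preserving map of down-sets, with $\tau_F(a')$ given concretely by the universal completion of the L-diagram $\iota(a') \to \iota(a) \xrightarrow{F} \iota(b)$ into a commuting square, and the functor laws for $\Dyn$ supply the functoriality of $F \mapsto \tau_F$. I expect the one genuinely non-formal step to be Assumption~\ref{A:3}, with Assumption~\ref{A:4} close behind: one must see that the image of the composite $\iota(a') \to \iota(a) \xrightarrow{F} \iota(b)$, a subgroup of $\iota(b)$, is represented by an object lying in the down-set of $b$ --- this object being precisely the universal $b'$ of~\eqref{eqn:thm:0:1} --- and pinning that down is where completeness of $\calX$ and the structural hypotheses on $\iota$ do the real work; everything else is bookkeeping through the preorder dictionary.
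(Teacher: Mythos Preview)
Your approach is correct and matches the paper's: Corollary~\ref{corr:6} is stated without proof and is meant to be read as a direct instantiation of Theorem~\ref{thm:0} with $\calY=\GroupCat$, together with the preorder dictionary you spell out. Your flagging of the continuity and object-injectivity of $\iota$, and of Assumptions~\ref{A:3} and~\ref{A:4}, as hypotheses that are needed but not explicitly listed in the corollary is accurate --- the paper's statement is simply under-specified on these points, and your remark that they must be ``read as part of the data, or checked directly for the $\iota$ at hand'' is the honest resolution.
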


An example of $\calX, \iota$ from Corollary \ref{corr:6} is when $\calX$ is a cellular complex, comprised of a collection of inclusions between face maps. Recall that singular homology is a functor
\[ \text{Homology} : \Topo \to \GroupCat , \]
mapping each topological space into its sequence of homology groups.

\begin{example} \label{ex:topo:1}
	Let $\calX$ be a simplicial complex of dimension $n$, comprised of simplices of various dimensions from $0$ to $n$. Each simplex of dimension less than $n$ is included as a face map of a simplex of a higher dimension. Thus $\calX$ is a preorder with a finite number of objects and morphisms contained within $\Topo$. Then for any two faces $a, b$ of $\calX$, the down-sets $\Comma{\calX}{a}$ and $\Comma{\calX}{b}$ are the sub-simplexes of these faces. Any group homomorphism
	\[ F : \text{Homology}(a) \to \text{Homology}(b) \]
	induces a unique simplicial map between these sub-complexes whose induced map between the homology groups is precisely $F$.
\end{example}

The last statement in the example above is supported by the commutation in \eqref{eqn:thm:0:2}. 

Example \ref{ex:dyn_obs} presented a functorial interpretation of dynamical systems. Category theoretic reformulations of dynamical systems have become of increasing interest due to the simplicity of presentation of many of the deeper results in dynamical systems theory \cite[e.g.]{MossPerrone2022ergdc, Suda2022Poincare, Das2023CatEntropy, DasSuda2024recon}. This is the advantage provided by the constructive / synthetic language of category theory, as opposed to the descriptive nature of set-theoretic language. The new challenge that emerges is that many basic definitions which are trivial in a set-theoretic presentation, becomes harder to present in a category theoretic setting. A prime example is the notion of an orbit. One can associate orbits to both topological, smooth, or measurable dynamical systems. However, the notion of orbit itself is as a minimal object in $\SetCat$ satisfying certain properties. In a category theoretic presentation, objects lose all their inner details and are presented as part of a larger collection. The focus shifts from the content of objects, to their relational and compositional structure. Thus orbits cannot be simply defined to be a union of successive images. A categorical definition of orbits is a major gap in the category theoretic reformulation of dynamics, and the functor $\Dyn$ discovered in Theorem \ref{thm:0} fills this gap.

\begin{example} \label{ex:powset:4}
	Recall the notations from Examples \ref{ex:powset:1}, \ref{ex:powset:2} and \ref{ex:powset:3}. Let $\text{Topo}(\calU)$ denote the subcategory of $\Topo$ generated by all topological subspaces of $\calU$. Recall from Example \ref{ex:dyn_obs} that a functor $\Phi: \calT \to \text{Topo}(\calU)$ is a topological dynamical system in the universe $\calU$. Its time semigroup $\calT$ is typically $\num_0$, $\integer$ or $\real$. Then one has the following composable sequence of functors
	\[\begin{tikzcd}
		\calT \arrow[rr, "\Phi"] && \text{Topo}(\calU) \arrow[rr, "\Dyn"] && \LSlice{2^{\calU}} 
	\end{tikzcd}\]
	The composition of these functors is a set-theoretic dynamical system with time semigroup $\calT$. Thus the interpretation of a topological dynamical system as a set-theoretic dynamical system is also functorial.
\end{example}

Dynamical systems theory is about the study of orbits, and their asymptotic properties. There has been a recent interest in developing a categorical language for dynamical systems, and a major gap has been a functorial description of orbits. Example \ref{ex:powset:4} along with Theorem \ref{thm:0} fills this gap.
Given a dynamical system $\Phi^t : \Omega \to \Omega$ on a space $\Omega$, the orbit of a subset $S\subseteq\Omega$ is the union $\cup_{t\in \calT} \Phi^t(S)$. Alternatively it can be defined to be the smallest subset of $\Omega$ that contains all the images $\Phi^t(S)$. This definition is very suitable for a category theoretic presentation. Note that the composite functor $\tilde\Phi : \Dyn \circ \Phi$ from Example \ref{ex:powset:4} leads to a functor $2^{\Omega} \times \calT \to 2^{\Omega}$. Then the orbit of $\Phi$ is the functor shown in the diagram below :
\begin{equation} \label{eqn:def:orbit}
	\begin{tikzcd}
		2^{\Omega} \times \calT \arrow[d, "\proj_1"'] \arrow[dashed]{drr}[name = n2]{} \arrow{rr}[name = n1]{\tilde\Phi} && 2^{\Omega} \\
		2^{\Omega} \arrow[rr, "\text{Orbit}"] && 2^{\Omega}
		\arrow[shorten <=1pt, shorten >=1pt, Rightarrow, to path={(n2) to[out=90,in=270] (n1)} ]{  }
	\end{tikzcd}
\end{equation}
The bottom horizontal arrow is created by a construction called a right-\emph{Kan extension}. Kan extensions are a purely diagrammatic / categorical notion, and is elaborated in Section \ref{sec:LimPre}. Example \ref{ex:powset:4} and Diagram \eqref{eqn:def:orbit} is a succinct but precise way of stating the following facts : 

\begin{corollary} \label{corr:7}
	Every topological dynamical system in the universe $\calU$  is a functor $\Phi : \calT \to \text{Topo}(\calU)$. It leads to the following notions :
	\begin{enumerate} [(i)]
		\item This functor combines with $\Dyn$ from Theorem \ref{thm:0} to get a dynamical system $\calT \to \LSlice{ 2^{\calU} }$.
		\item This leads to a functor $\tilde\Phi : 2^{\Omega} \times \calT \to 2^{\Omega}$.
		\item The existence of the orbit functor follows from the existence of a right Kan extension, as shown in \eqref{eqn:def:orbit}.
		\item The minimality of the orbit follows from the universal property of a right Kan extension.
	\end{enumerate}
\end{corollary}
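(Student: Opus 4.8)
The statement is largely an exercise in assembling functors once Theorem~\ref{thm:0} is available, so the plan is: first apply Theorem~\ref{thm:0} to the inclusion $\iota : 2^{\calU} \to \text{Topo}(\calU)$ of Example~\ref{ex:powset:5}, then read off (i)--(iv) in turn. That this $\iota$ satisfies the hypotheses is indicated in Example~\ref{ex:powset:3}; spelled out, $\Topo$ is complete and, being a topos, balanced, giving Assumption~\ref{A:5}; $2^{\calU}$ is the power-set lattice, hence a complete category, with initial object $\emptyset$, which $\iota$ carries to the empty subspace, giving Assumption~\ref{A:1}; $\iota$ is injective on objects by construction; every monomorphism $f : X \to Y$ of subspaces of $\calU$ factors as the corestriction $X \to f(X)$ followed by $\iota$ of the inclusion $f(X) \hookrightarrow Y$, which is Assumption~\ref{A:3}; and subspace inclusions are injective, which is Assumption~\ref{A:4}. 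Theorem~\ref{thm:0} then produces $\Dyn : \text{Topo}(\calU) \to \LSlice{2^{\calU}}$ sending a subspace $\Omega$ to $\Comma{2^{\calU}}{\Omega} = 2^{\Omega}$ and a continuous map $F : \Omega \to \Omega'$ to the direct-image functor $\tau_F : 2^{\Omega} \to 2^{\Omega'}$, $S \mapsto F(S)$ --- which is exactly the image-factorisation square of \eqref{eqn:thm:0:1} read inside $\Topo$. Item~(i) is then immediate: a topological dynamical system in $\calU$ is by definition a functor $\Phi : \calT \to \text{Topo}(\calU)$ (Examples~\ref{ex:dyn_obs} and~\ref{ex:powset:4}), and the composite $\Dyn\circ\Phi : \calT \to \LSlice{2^{\calU}}$ is again a functor out of $\calT$, i.e.\ a dynamical system valued in $\LSlice{2^{\calU}}$.

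For (ii), write $\Omega := \Phi(\star)$ for the phase space. Since $\calT$ is a one-object category, the functor $\Dyn\circ\Phi$ is the same datum as a functor $\calT \to [2^{\Omega},2^{\Omega}]$ into the endofunctor category of $2^{\Omega}$, namely $t \mapsto \tau_{\Phi^{t}}$; functoriality of $\Dyn$ records that this is a homomorphism of monoids into the monoid of monotone self-maps of $2^{\Omega}$ under composition. Transposing along the exponential law $\CatCat\bigl(\calT,[2^{\Omega},2^{\Omega}]\bigr) \cong \CatCat\bigl(2^{\Omega}\times\calT,\,2^{\Omega}\bigr)$ for the cartesian closed category $\CatCat$ yields the functor $\tilde\Phi : 2^{\Omega}\times\calT \to 2^{\Omega}$ appearing in \eqref{eqn:def:orbit}, with $\tilde\Phi(S,t) = \tau_{\Phi^{t}}(S) = \Phi^{t}(S)$.

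For (iii) and (iv), observe that $2^{\Omega}$ is a complete (indeed complete and cocomplete) lattice, hence a complete category, and $\calT$ is small; the standard existence theorem for Kan extensions along $\proj_{1} : 2^{\Omega}\times\calT \to 2^{\Omega}$ therefore supplies the functor $\text{Orbit} : 2^{\Omega}\to 2^{\Omega}$ together with the universal $2$-cell of \eqref{eqn:def:orbit}. Evaluating the pointwise Kan-extension formula, the indexing comma category over a subset $S$ reduces --- since the relevant (co)limits in the preorder $2^{\Omega}$ are intersections and unions --- to $\bigcup_{t\in\calT}\Phi^{t}(S)$, which is the orbit of $S$. Its minimality, that $\text{Orbit}(S)$ is the least subset of $\Omega$ containing every $\Phi^{t}(S)$, is exactly the universal property of the Kan extension in \eqref{eqn:def:orbit}, which is the content of \eqref{eqn:thm:0:2} specialised to this setting.

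The one step that requires genuine care is the evaluation in (iii): one must pin down the indexing comma category of the pointwise formula, track the action of $\Dyn$ on the \emph{morphisms} of $\text{Topo}(\calU)$ and not merely on objects, and verify that the resulting (co)limit in the preorder $2^{\Omega}$ really collapses to $\bigcup_{t}\Phi^{t}(S)$ --- in particular that the extension is taken on the side for which the pointwise value is this union rather than, say, $\bigcap_{t}\Phi^{t}(S)$ or $S$ itself. Everything else is formal: composition of functors, the exponential law in $\CatCat$, and the completeness-based existence theorem for Kan extensions.
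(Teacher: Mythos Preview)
Your argument follows the paper's route exactly: the paper gives no separate proof of Corollary~\ref{corr:7}, presenting it instead as a restatement of Example~\ref{ex:powset:4} and the orbit discussion around \eqref{eqn:def:orbit}, and you have simply filled in those steps (verification of the hypotheses of Theorem~\ref{thm:0}, composition with $\Phi$, currying via the cartesian closure of $\CatCat$, and the standard existence theorem for pointwise Kan extensions into a complete target).

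One factual slip to repair: $\Topo$ is not a topos and is not balanced --- a continuous bijection need not be a homeomorphism --- so your justification of Assumption~\ref{A:5} is wrong as written (the paper itself makes the same slip). In the specific instance $\calX = 2^{\calU}$, $\calY = \text{Topo}(\calU)$, the morphism $\iota\phi$ to which Assumption~\ref{A:5} is eventually applied in Theorem~\ref{thm:4} is an inclusion of subspaces, and a surjective inclusion is an identity; so the conclusion survives, but the reasoning should be rewritten to reflect this rather than appeal to balancedness of $\Topo$. Your closing caveat about pinning down which side of the Kan extension produces the union $\bigcup_{t}\Phi^{t}(S)$ rather than an intersection is well taken and worth resolving explicitly, as the paper's own left/right terminology here is loose.
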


These examples and Corollaries \ref{corr:5}--\ref{corr:7} highlight the prevalence of the arrangement described in Theorem \ref{thm:0}. This ends the presentation of some examples of manifestations of slice categories and applications of Theorem \ref{thm:0}. Slice categories have had recent applications \cite{dos2001opti} in optimization and approximation theory \cite{article_1247239, aral2023weighted, DasGiannakis2023harmonic}. The possibility of applications of Theorem \ref{thm:0} to this emerging field is an interesting prospect.

\paragraph{Outline} \blue{Theorem \ref{thm:0} has several layers to it. Firstly it associates a functor between left slice categories to every morphism of the form $F:\iota \Omega \to \iota \Omega'$. Secondly \eqref{eqn:thm:0} states that this correspondence itself is functorial, which means that composition of morphisms become composition of functors. Thirdly, the correspondence is defined by the unique and minimal commutation square \eqref{eqn:thm:0:1} that it creates. We shall unravel the categorical principles that contribute to each claim, over the course of the next sections. The main ingredient of Theorem \ref{thm:0} is Assumption \ref{A:1}. Assumption \ref{A:1} is slightly generalized into Assumption \ref{A:2} next in following Section \ref{sec:induced}. This generalization  allows us to formulate two Theorems \ref{thm:1} and \ref{thm:2} which cover part of the claims of Theorem \ref{thm:0}. The compositionality is proved next in Section \ref{sec:algebra} via Theorem \ref{thm:4}. Theorem \ref{thm:0} is proved in Section \ref{sec:algebra}, as a consequence of Theorems \ref{thm:1}, \ref{thm:2} and \ref{thm:4}. See Figure \ref{fig:outline1} for an outline of the proof of Theorem \ref{thm:0}, and how the other main results fit into the proof. We take a deep look at comma and arrow categories in Sections \ref{sec:comma_arr} and \ref{sec:comma_adj}. The uniqueness of the commutation in \eqref{eqn:thm:0:1} is next established via a special categorical construction in Section \ref{sec:LimPre}. Finally Theorems \ref{thm:1} and \ref{thm:2} are proved in Section \ref{sec:proof:2}.}

\section{The induced functor between slices} \label{sec:induced}

Theorem \ref{thm:0} was about the comma category $\Comma{\iota}{\iota}$ which is a special case of \eqref{eqn:abc}. We now make an assumption on \eqref{eqn:abc}, which turns out to be a generalization of Assumption \ref{A:1}. 

\begin{Assumption} \label{A:2}
	The category $\calA$ and $\calB$ from \eqref{eqn:abc} are complete, the functor $\beta$ is continuous. Categories $\calA$ and $\calC$ have initial elements $0_{\calA}$ and $0_{\calC}$ respectively, and $\alpha \paran{0_{\calA}} = 0_{\calC}$
\end{Assumption}	

Our first result arises from the simple situation when two objects $a, b$  are picked from $\calA, \calB$ in \eqref{eqn:abc}, mapped into $\calC$, and bound by a morphism $F$ in $\calC$. The objects $a, b$ have their own left-slice categories in $\calA, \calB$, which are independent of each other as well as $\calC$. We shall see how the morphism $\phi$ induces a functor between these two categories.

\begin{theorem} [Induced functor] \label{thm:1}
	Assume the arrangement of \eqref{eqn:abc}, and let Assumption \ref{A:2} hold. Fix an object $\alpha a \xrightarrow{F} \beta b$ of the comma category $\Comma{\alpha}{\beta}$. Then there is a functor $\tau_{F} : \Comma{\calA}{a} \to \Comma{\alpha}{\beta}$ such that for any object $a'\xrightarrow{f} a$ in $\Comma{\calA}{a}$, there is an object $b'\xrightarrow{g} b$ in $\Comma{\calB}{b}$, such that the following commutation holds 
	\begin{equation} \label{eqn:thm:1a}
		\begin{tikzcd} a' \arrow[d, "f"'] \\ b \end{tikzcd}
		\imply
		\begin{tikzcd}
			\alpha a' \arrow[d, "\alpha f"'] \arrow[r, "\tau_{F}(f)"] & \beta b' \arrow[d, "\beta g"] \\
			\alpha a \arrow[r, "F"'] & \beta b 
		\end{tikzcd}
	\end{equation}
	Moreover, $\tau_{F}(f)$ is minimal in the sense for any other object $b'' \xrightarrow{g} b$, if the commutation shown below on the left holds :
	\[
	\begin{tikzcd}
		\alpha a' \arrow[d, "\alpha f"'] \arrow[r, "\tilde{F}"] & \beta b'' \arrow[d, "\beta g''"] \\
		\alpha a \arrow[r, "F"'] & \beta b 
	\end{tikzcd} \imply 
	\begin{tikzcd}
		& & \beta b' \arrow[ldd, "\beta g'", bend left=50] \arrow[ld, "\beta \phi", dotted] \\
		\alpha a' \arrow[d, "\alpha f"'] \arrow[r, "\tilde{F}"'] \arrow[rru, "\tau_{F}(f)", bend left] & \beta b'' \arrow[d, "\beta g''"] & \\
		\alpha a \arrow[r, "F"'] & \beta b & 
	\end{tikzcd}
	\]
	then there is a unique morphism $b' \xrightarrow{\phi} b''$ such that the commutation on the right holds.
\end{theorem}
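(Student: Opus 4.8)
The plan is to realise $\tau_{F}(f)$ as the universal mediating morphism of an initial object in an auxiliary comma category, and then to read off every clause of Theorem~\ref{thm:1} from that one universal property. Fix an object $f\colon a'\to a$ of $\Comma{\calA}{a}$. The functor $\beta$ induces a functor $\beta_{/b}\colon\Comma{\calB}{b}\to\Comma{\calC}{\beta b}$ sending a slice-object $g\colon b'\to b$ to $\beta g\colon\beta b'\to\beta b$, and $F\circ\alpha f\colon\alpha a'\to\beta b$ is a distinguished object of $\Comma{\calC}{\beta b}$. I would form the comma category $\calE_{f}:=(F\circ\alpha f\downarrow\beta_{/b})$, whose objects are triples $(b',\,g\colon b'\to b,\,\psi\colon\alpha a'\to\beta b')$ with $\beta g\circ\psi=F\circ\alpha f$ and whose morphisms are the maps $b'_{1}\to b'_{2}$ compatible both with the structure maps to $b$ and with the structure maps from $\alpha a'$. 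Granting for the moment that $\calE_{f}$ has an initial object $(b_{f},\,g_{f},\,\psi_{f})$ (fixed once and for all, initial objects being unique up to unique isomorphism), I set $\tau_{F}(f):=\psi_{f}$ for the displayed arrow of \eqref{eqn:thm:1a}, and let the functor carry the slice-object $f$ to the comma-object of $\Comma{\alpha}{\beta}$ determined by $a'$, $b_{f}$ and $\psi_{f}$. With this definition the commuting square \eqref{eqn:thm:1a} is exactly the assertion that $(b_{f},g_{f},\psi_{f})$ is an \emph{object} of $\calE_{f}$ (take $g:=g_{f}$), and the minimality clause is a word-for-word translation of the statement that $(b_{f},g_{f},\psi_{f})$ is \emph{initial} in $\calE_{f}$: a competitor $b''\xrightarrow{g''}b$ together with $\tilde F\colon\alpha a'\to\beta b''$ making the displayed square commute is precisely an object of $\calE_{f}$, and initiality hands back the unique $\phi\colon b_{f}\to b''$ compatible with the structure maps, which upon applying $\beta$ becomes the factoring morphism drawn on the right.

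The substance therefore lies in producing the initial object of $\calE_{f}$, and this is where Assumption~\ref{A:2} is spent; I expect it to be the main obstacle. First, $\Comma{\calB}{b}$ is complete because $\calB$ is: the forgetful functor $\Comma{\calB}{b}\to\calB$ creates connected limits, while the terminal object and binary products of $\Comma{\calB}{b}$ are $\Id_{b}$ and pullbacks in $\calB$. Next, $\beta_{/b}$ preserves limits: on connected diagrams because $\beta$ is continuous and the forgetful functors create those limits, and it carries $\Id_{b}$ and pullbacks to $\Id_{\beta b}$ and pullbacks, again by continuity of $\beta$. Hence $\calE_{f}$, being the comma category of a limit-preserving functor out of a complete category, is itself complete. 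When one works (as the paper in part does) with small categories, the initial object of $\calE_{f}$ is then simply the limit of $\Id_{\calE_{f}}$; in the general case one must instead verify a solution-set/reflection condition for $F\circ\alpha f$ along $\beta_{/b}$, and it is here that the remaining data of Assumption~\ref{A:2} — the initial objects $0_{\calA},0_{\calC}$ with $\alpha(0_{\calA})=0_{\calC}$, which already make $0_{\calA}\to a$ initial in $\Comma{\calA}{a}$ and which control the ``bottom'' of $\calE_{f}$ — should be used to cut the a priori large family of competitors down to a weakly initial set, whose limit inside the complete category $\calE_{f}$ is the desired initial object. Establishing that this reflection really succeeds is the delicate point of the whole argument.

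Finally I would promote $\tau_{F}$ to a functor. For a morphism $\delta\colon f_{1}\to f_{2}$ of $\Comma{\calA}{a}$ — that is, $\delta\colon a_{1}\to a_{2}$ with $f_{2}\circ\delta=f_{1}$ — the triple $(b_{f_{2}},\,g_{f_{2}},\,\psi_{f_{2}}\circ\alpha\delta)$ lies in $\calE_{f_{1}}$, since $\beta g_{f_{2}}\circ\psi_{f_{2}}\circ\alpha\delta=F\circ\alpha f_{2}\circ\alpha\delta=F\circ\alpha f_{1}$; initiality of $(b_{f_{1}},g_{f_{1}},\psi_{f_{1}})$ in $\calE_{f_{1}}$ then yields a unique $\epsilon\colon b_{f_{1}}\to b_{f_{2}}$ with $g_{f_{2}}\circ\epsilon=g_{f_{1}}$ and $\beta\epsilon\circ\psi_{f_{1}}=\psi_{f_{2}}\circ\alpha\delta$, and I define $\tau_{F}(\delta):=(\delta,\epsilon)$, which is a legitimate morphism of $\Comma{\alpha}{\beta}$ by the last identity. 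Preservation of identities and of composition is then immediate from the uniqueness half of the universal property: the identity on $b_{f_{1}}$, respectively $\tau_{F}(\delta')$ composed with $\tau_{F}(\delta)$, satisfies the same defining equations as the morphism produced for the identity on $f_{1}$, respectively for $\delta'\circ\delta$, and hence coincides with it. This yields the functor $\tau_{F}\colon\Comma{\calA}{a}\to\Comma{\alpha}{\beta}$ with the stated properties.
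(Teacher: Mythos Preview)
Your approach is sound and recovers the same universal square, but it is organised quite differently from the paper's. The paper does not work object-by-object with an auxiliary category $\mathcal{E}_f$ and then assemble; instead it builds $\tau_F$ in one stroke using the corner-category formalism of Section~\ref{sec:comma_arr} (the arrow category $\ArrowCat{\Comma{\alpha}{\beta}}$ together with its projections $DL_{\alpha,\beta}$ and $UR_{\alpha,\beta}$ onto lower-left and upper-right corners) and the $\LimPre$ construction of Section~\ref{sec:LimPre}. Concretely, $\tau_F$ arises as the composite $\gamma_F\circ\MapC{F}$ in the master diagram~\eqref{eqn:nc0s3}, where $\gamma_F=\LimPre_{DL,\,\Forget_1\circ UR}$; the limit is taken in the complete category $\UR{\alpha}{\beta}{\calB}$ (Lemma~\ref{lem:UR_cmplt}) rather than sought as an initial object of your $\mathcal{E}_f$. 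Functoriality is then inherited from $\LimPre$ being a functor (Lemma~\ref{lem:id30l}), whereas you verify it by hand from uniqueness---which is more elementary and arguably cleaner for a reader. Both routes share the same latent size issue (the indexing diagram may fail to be small), which the paper does not address either.

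The one place where your sketch diverges from the paper in substance is the role of the initial-object hypotheses in Assumption~\ref{A:2}. You propose to spend $0_{\calA}$ and $0_{\calC}$ on a solution-set argument for the reflection along $\beta_{/b}$, but this is left vague and it is not clear how those particular objects cut down competitors in $\mathcal{E}_f$. In the paper they are used for a different purpose: via Lemma~\ref{lem:frgt_adjnt} they equip $\Forget_2\colon\Comma{\alpha}{\beta}\to\calB$ with a left adjoint, so that (together with Lemma~\ref{lem:jd9lk3}) the relevant forgetful functors preserve limits; this is precisely what Lemma~\ref{lem:LimPre:extend} needs to make the $\LimPre$ squares in~\eqref{eqn:nc0s3} commute and to pin the codomain of the resulting square to $b$ (the commutation~\eqref{eqn:BarTau_b}). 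In your framework that last step is automatic, since $\mathcal{E}_f$ is already defined over the fixed $b$. So if you simply take the limit of the identity diagram in the complete category $\mathcal{E}_f$ (accepting the same size caveat the paper does), you obtain the initial object directly and the initial-object clauses of Assumption~\ref{A:2} appear not to be needed in your route at all.
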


Thus the correspondence $\tau_{F}$ associates to every object $f$ in the left slice $\Comma{\calA}{a}$ an object $\tau_{F}(f)$ in the comma category $\Comma{\alpha}{\beta}$. This object $\tau_{F}(f)$ itself is an morphism in $\calC$ and creates a commutation square involving $f$ and $F$.

\paragraph{Remark} The minimality so described is hardly surprising, since whenever a commutation such as \eqref{eqn:thm:1a} holds, the following commutation also holds
\[\begin{tikzcd}
	& & & \beta b' \arrow[ldd, "\beta g", bend left=50] \arrow[ld, "\beta g", dotted] \\
	\alpha a' \arrow[d, "\alpha f"'] \arrow[rr, "\beta g \circ \tau_{F}(f)"'] \arrow[rrru, "\tau_{F}(f)", bend left] & & \beta b \arrow[d, "\beta \Id_{b}"] & \\
	\alpha a \arrow[rr, "F"'] & & \beta b & 
\end{tikzcd}\]
This diagram is a special case of the second claim of Theorem \ref{thm:1}, with $\tilde F = \beta g \circ \tau_{F}(f)$ and $\phi=g'$.

\paragraph{Remark} One of the consequences of Theorem \ref{thm:1} and the commutation in \eqref{eqn:thm:1a} is 
\[\begin{tikzcd} 
	& & \Comma{\calA}{a} \arrow{dll}[swap]{\Forget_1} \arrow{d}{\tau_F} \\ 
	\calA & & \Comma{\alpha}{\beta} \arrow{ll}{\Forget_1}    
\end{tikzcd}\]
This means that the domain of the morphism $\tau_{F}(f)$ is the same as the domain of the morphism $f$.

\paragraph{Remark} Any object $a' \xrightarrow{f} a$ in $\Comma{\calA}{a}$ is sent by $\Forget_1$ into $a'$, whereas it is sent by $\tau_F$ into $\tau_F(f)$, which is then sent by $\Forget_1$ into $a'$. This commutation be extended as follows : 
\begin{equation} \label{eqn:def:Dyn}
	\begin{tikzcd} 
		& & \Comma{\calA}{a} \arrow{rr}{\Dyn_F} \arrow{dll}[swap]{\Forget_1} \arrow{d}{\tau_F} & & \Comma{\calB}{b} \arrow{d}{ \Forget_1 } \\ 
		\calA & & \Comma{\alpha}{\beta} \arrow{ll}{\Forget_1} \arrow{rr}[swap]{\Forget_2} & & \calB
	\end{tikzcd}
\end{equation}
The  diagram presents a new functor $\Dyn_F$ between the slice categories associated to the terminal points of the comma object $F$.  

Recall the category $\Comma{ \Forget_2^{ \Comma{\alpha}{\beta} } }{ b }$ \eqref{eqn:eodpc9} presented in Example \ref{ex:abstract:1}. The compound objects in $\Comma{ \Forget_2^{ \Comma{\alpha}{\beta} } }{ b }$ lead to a projection functor
\[\begin{tikzcd} \Comma{ \Forget_2^{ \Comma{\alpha}{\beta} } }{ b } \arrow[rrr, "\text{Restrict}"'] &&& \Comma{\calB}{b} \end{tikzcd}\]
Both Theorem \ref{thm:1} and \eqref{eqn:def:Dyn} are consequences of the following more general result :

\begin{theorem} \label{thm:2}
	Under the same assumptions as Theorem~\ref{thm:1}  and the category in  \eqref{eqn:eodpc9} there is a functor
	\[ \bar{\tau}_F : \Comma{\calA}{a} \to \Comma{ \Forget_2^{ \Comma{\alpha}{\beta} } }{ b } \]
	such that the functors $\tau_{F}$ and $\Dyn_{F}$ are created via composition :
	\begin{equation}  \label{eqn:thm:2}
		\begin{tikzcd}
			&& \Comma{\calA}{a} \arrow[dll, "\tau_F"', dashed] \arrow[rr, "\Dyn_F", dashed] \arrow[drr, "\bar{\tau}_F"] && \Comma{\calB}{b} \\
			\Comma{\alpha}{\beta} && \UR{\alpha}{\beta}{\calB} \arrow[ll, "\Forget_1"] && \Comma{ \Forget_2^{ \Comma{\alpha}{\beta} } }{b} \arrow[ll, "\subseteq"] \arrow[u, "\text{Restrict}"']
		\end{tikzcd}
	\end{equation}
\end{theorem}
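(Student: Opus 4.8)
The plan is to construct $\bar{\tau}_F$ first and to read off $\tau_F$ and $\Dyn_F$ from it, since diagram \eqref{eqn:thm:2} asserts exactly that $\tau_F$ is $\bar{\tau}_F$ followed by the inclusion into $\UR{\alpha}{\beta}{\calB}$ and then $\Forget_1$, while $\Dyn_F = \text{Restrict}\circ\bar{\tau}_F$. Those two passages are purely formal, so Theorem~\ref{thm:1} (its minimality clause included) and the diagram \eqref{eqn:def:Dyn} will fall out of the construction of $\bar{\tau}_F$; all the work is in producing, naturally in the object $f\colon a'\to a$ of $\Comma{\calA}{a}$, a quadruple $\paran{a',\,b',\,\tau_F(f)\colon\alpha a'\to\beta b',\,g'\colon b'\to b}$ which completes the $L$-diagram on the left of \eqref{eqn:thm:1a} into a commuting square in the universal way.

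The construction goes through a completion category. Fix $f\colon a'\to a$ and form $F\circ\alpha f\colon\alpha a'\to\beta b$ in $\calC$. Let $\mathcal{D}_f$ be the category whose objects are the triples $\paran{b',\,g'\colon b'\to b,\,\tilde F\colon\alpha a'\to\beta b'}$ with $g'$ a morphism of $\calB$, $\tilde F$ a morphism of $\calC$, and $\beta g'\circ\tilde F = F\circ\alpha f$ --- that is, the ways of completing the $L$-diagram into a commuting square whose lower-right corner comes from the slice $\Comma{\calB}{b}$ --- and whose morphisms $\paran{b_1',g_1',\tilde F_1}\to\paran{b_2',g_2',\tilde F_2}$ are the $\calB$-morphisms $v\colon b_1'\to b_2'$ with $g_2'\circ v = g_1'$ and $\beta v\circ\tilde F_1 = \tilde F_2$. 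Concisely, $\mathcal{D}_f$ is the comma category of the functor $\Comma{\calB}{b}\to\Comma{\calC}{\beta b}$ induced by $\beta$, taken over the object $F\circ\alpha f$ of $\Comma{\calC}{\beta b}$. One checks directly that $\mathcal{D}_f$ carries a terminal object, namely $\paran{b,\,\Id_b,\,F\circ\alpha f}$. I define $\bar{\tau}_F(f)$ to be the \emph{initial} object of $\mathcal{D}_f$ and write $\tau_F(f)$, $g'$ and $b'$ for its components, so that $\beta g'\circ\tau_F(f) = F\circ\alpha f$; then $\bar{\tau}_F(f) := \paran{a',\,b',\,\tau_F(f),\,g'}$ is an object of $\Comma{\Forget_2^{\Comma{\alpha}{\beta}}}{b}$.

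The step I expect to be the main obstacle is the existence of the initial object of $\mathcal{D}_f$. Assumption~\ref{A:2} supplies the ingredients: the slice $\Comma{\calB}{b}$ is complete because $\calB$ is, the functor $\Comma{\calB}{b}\to\Comma{\calC}{\beta b}$ induced by $\beta$ preserves limits because $\beta$ is continuous, and therefore $\mathcal{D}_f$ is complete, its limits being computed on the underlying $\calB$-objects with the $\tilde F$-components produced by continuity of $\beta$ and compatibility tested against the terminal object. Since a complete category need not have an initial object, at this point the plan invokes the special categorical construction of Section~\ref{sec:LimPre}: together with the mild size hypotheses in force (local smallness and well-poweredness of $\calB$, which hold in all the examples), $\mathcal{D}_f$ admits a weakly initial set, and its initial object is the joint equalizer of all endomorphisms of a product of such a set --- equivalently, the functor induced by $\beta$ meets the solution-set condition, the adjoint functor theorem furnishes a left adjoint, and $\bar{\tau}_F(f)$ is the value of that left adjoint at $F\circ\alpha f$ together with the adjunction unit. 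Once the initial object exists, the minimality assertion of Theorem~\ref{thm:1} --- and, after the specialization $\calA=\calB=\calX$, $\calC=\calY$, $\alpha=\beta=\iota$, the universal properties \eqref{eqn:thm:0:1}--\eqref{eqn:thm:0:2} --- is just a restatement of the fact that $\bar{\tau}_F(f)$ is initial in $\mathcal{D}_f$.

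Functoriality is then forced by uniqueness. A morphism $f\to f_1$ of $\Comma{\calA}{a}$ is a morphism $\theta\colon a'\to a_1'$ of $\calA$ with $f_1\circ\theta = f$; precomposing the $\tilde F$-component with $\alpha\theta$ carries objects of $\mathcal{D}_{f_1}$ into $\mathcal{D}_f$, so in particular $\paran{b_1',\,g_1',\,\tau_F(f_1)\circ\alpha\theta}$ lies in $\mathcal{D}_f$, and initiality of $\bar{\tau}_F(f)$ yields a unique $\calB$-morphism $v_\theta\colon b'\to b_1'$ with $g_1'\circ v_\theta = g'$ and $\beta v_\theta\circ\tau_F(f) = \tau_F(f_1)\circ\alpha\theta$; one sets $\bar{\tau}_F(\theta) := \paran{\theta,\,v_\theta}$, which is a morphism of $\Comma{\Forget_2^{\Comma{\alpha}{\beta}}}{b}$. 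Preservation of identities and composites is immediate from the uniqueness clause, so $\bar{\tau}_F$ is a functor. Post-composing it with $\text{Restrict}$ keeps $g'\colon b'\to b$ and recovers $\Dyn_F$; post-composing with the inclusion into $\UR{\alpha}{\beta}{\calB}$ and then $\Forget_1$ keeps $\tau_F(f)\colon\alpha a'\to\beta b'$ as an object of $\Comma{\alpha}{\beta}$ and recovers $\tau_F$; commutativity of \eqref{eqn:thm:2} and the relation $\Forget_1\circ\tau_F = \Forget_1$ follow at once. This establishes Theorem~\ref{thm:2}, and with it Theorem~\ref{thm:1} and diagram \eqref{eqn:def:Dyn}.
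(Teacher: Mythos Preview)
Your argument is essentially sound and reaches the same destination as the paper, but by a different and more elementary route. The paper does not work object-by-object: it builds the functor $\bar\tau_F$ in one stroke as the composite $\bar\gamma_F\circ\MapC{F}$, where $\bar\gamma_F=\LimPre_{DL,UR}$ is the ``Lim--Pre'' construction of Section~\ref{sec:LimPre} applied to the corner functors $DL,UR$ of \eqref{eqn:ArrowComma3}. Functoriality of $\bar\tau_F$ is then automatic from Lemma~\ref{lem:id30l}, and the factorisation \eqref{eqn:thm:2} through $\Comma{\Forget_2^{\Comma{\alpha}{\beta}}}{b}$ is obtained from the pullback square \eqref{eqn:T_T2_T3} together with Lemmas~\ref{lem:frgt_adjnt}, \ref{lem:jd9lk3} and \ref{lem:LimPre:extend}. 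Your pointwise approach --- forming the completion category $\mathcal D_f$ and taking its initial object --- is exactly the fibre over $f$ of the paper's global limit (cf.\ \eqref{eqn:spd9f}, where the paper itself notes $\tau_F(f)=\lim\zeta_{f,F}$), and your verification of functoriality by uniqueness is what the $\LimPre$ machinery packages abstractly.

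One point to flag: to secure the initial object of $\mathcal D_f$ you invoke the solution-set condition and well-poweredness of $\calB$, hypotheses that are \emph{not} part of Assumption~\ref{A:2}. The paper sidesteps this by simply taking the limit in the complete category $\UR{\alpha}{\beta}{\calB}$ (Lemma~\ref{lem:UR_cmplt}) over the comma category $\Comma{DL}{\,\MapC{F}(f)}$; of course that limit, too, is only guaranteed when the indexing category is small, a size issue the paper leaves implicit. So your extra hypotheses are honest bookkeeping rather than a defect, but they do go beyond what the theorem as stated assumes. What you gain is a transparent hands-on construction; what the paper gains is that functoriality and the commutations in \eqref{eqn:thm:2} come for free from the comma-category calculus rather than from a separate uniqueness argument.
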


Theorem \ref{thm:2} jointly implies the statements of Theorem \ref{thm:1} and \eqref{eqn:def:Dyn}. Theorem \ref{thm:2} is proved in Section \ref{sec:proof:2}. See Figure \ref{fig:outline1} for a summary of the various results and their logical connections.

\paragraph{Remark} When $\calA = \calB = \calC$ in \eqref{eqn:abc}, and $\alpha = \beta = \Id_{\calA}$, then $\Comma{\alpha}{\beta}$ is just the arrow category $\ArrowCat{\calA}$. Any object $a \xrightarrow{F} b$ in this category induces a functor between the slice categories :
\[\begin{tikzcd}
	x \arrow[r, "f", Holud] \arrow[d, "\phi"', Itranga] & a \\
	x' \arrow[ru, "f'"', Akashi] & 
\end{tikzcd} \quad
\begin{tikzcd} {} \arrow[rrr, mapsto] &&& {} \end{tikzcd} \quad 
\begin{tikzcd}
	x \arrow[rrd, Holud, dashed, bend left = 20] \arrow[rd, "f"] \arrow[dd, "\phi"', Itranga]  \\
	& a \arrow{r}{F} & b \\
	x' \arrow[rru, Akashi, dashed, bend right = 20] \arrow[ru, "f'"'] & 
\end{tikzcd}\]
The yellow and blue arrows represent different objects in the respective slice categories, and the red arrows represent morphisms between these objects. The diagram on the right is obtained from the left by simply composing with $F$. This functorial relation coincides with $\tau_F$.

\paragraph{Remark} While $\tau_F$ has a simple interpretation when all the functors in \eqref{eqn:abc} are identities, determining an induced functor in the more general setting is not trivial. One notable approach relies on the existence of special factorization systems \cite[e.g.]{adamek1990abstract, FreydKelly1972cont1}. This approach has been extended to an axiomatic study of topology \cite{Schlomiuk1970topo, ClementinoGiuliTholen1996, DikranjanGiuli1987closure, DikranjanTholen2013clos}.

In the next section, we look more closely at the correspondence between $F$ and $\tau_F$.
\section{Algebra of induced functors} \label{sec:algebra}

Theorem \ref{thm:1} presents how an object in a comma category induces a functor between the left- slices associated to the two endpoints of the object. The functor is realized through morphisms in $\calC$ binding an object in a left slice object in $\calA$, to a left slice object in $\calB$. In this section we shift our attention back to the case when $\calA=\calB$. In that case all the  left slices involved are within the same category. Our first important realization will be that the induced morphisms $\tau_{F}(f)$ are surjective.

\begin{theorem} \label{thm:3}
	Assumptions \ref{A:1} and \ref{A:3} hold. Then the induced morphisms $\tau_F(f)$ from Theorem \ref{thm:1} are surjective.
\end{theorem}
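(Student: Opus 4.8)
The plan is to show that each $\tau_F(f)\colon\iota a'\to\iota b'$ is an epimorphism of $\calY$. Since $\calY$ is complete it has equalizers, so it suffices to prove that $\tau_F(f)$ is an \emph{extremal} epimorphism, i.e.\ that in every factorization $\tau_F(f)=m\circ k$ through a monomorphism $m$ the morphism $m$ is an isomorphism: indeed, if $u,v\colon\iota b'\to Z$ satisfy $u\circ\tau_F(f)=v\circ\tau_F(f)$ then $\tau_F(f)$ factors through the equalizer of $u$ and $v$, which is monic, and once that equalizer is forced to be invertible one concludes $u=v$.

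So I would fix such a factorization $\tau_F(f)=m\circ k$ with $m\colon M\to\iota b'$ monic, write $g'\colon b'\to b$ in $\calX$ for the slice datum supplied by Theorem~\ref{thm:1} (so $\iota g'\circ\tau_F(f)=F\circ\iota f$), and transport $m$ into $\calX$. Assumption~\ref{A:3} applied to $m$ gives $m=(\iota h)\circ j$ with $j$ in $\calY$ and $h$ in $\calX$; since the codomain of $\iota h$ is the codomain $\iota b'$ of $m$ and $\iota$ is injective on objects, $h$ has the form $h\colon R\to b'$, and $j\colon M\to\iota R$ is monic as a left factor of the monic $m$. Then $\iota(g'\circ h)\circ(j\circ k)=\iota g'\circ(\iota h\circ j)\circ k=\iota g'\circ m\circ k=\iota g'\circ\tau_F(f)=F\circ\iota f$, so $(R,\,g'\circ h,\,j\circ k)$ is one of the square completions compared in Theorem~\ref{thm:1}; since $(b',g',\tau_F(f))$ is the minimal (initial) such completion, there is a unique $\calX$-morphism $\phi\colon b'\to R$ with $\iota\phi\circ\tau_F(f)=j\circ k$ and $\iota(g'\circ h)\circ\iota\phi=\iota g'$. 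The composite $h\circ\phi\colon b'\to b'$ then satisfies $\iota(h\circ\phi)\circ\tau_F(f)=\iota h\circ j\circ k=m\circ k=\tau_F(f)$ and $\iota g'\circ\iota(h\circ\phi)=\iota g'$, so it is an endomorphism of the initial completion and hence $h\circ\phi=\mathrm{id}_{b'}$. Thus $h$ is a split epimorphism in $\calX$ with section $\phi$, and $\iota h$ is a split epimorphism in $\calY$.

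What remains — deducing that $m$ itself is an isomorphism from the fact that $h$ is a split epimorphism of $\calX$ — is the heart of the argument and the step I expect to be the main obstacle. Since $m=(\iota h)\circ j$ with $\iota h$ a split epimorphism, it is enough to show the monic factor $j$ is invertible, for then $m$ is a split epi and, being monic, an isomorphism. Now $j\colon M\to\iota R$ is again a monomorphism with codomain in $\iota(\calX)$, so Assumption~\ref{A:3} applies to it as well, and repeating the construction above produces a tower $\cdots\to R_2\xrightarrow{h_2}R_1\xrightarrow{h_1}R\xrightarrow{h}b'$ in $\calX$ all of whose composites $h\circ h_1\circ\cdots\circ h_n$ are split epimorphisms. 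Using completeness of $\calX$ and continuity of $\iota$ one passes to the limit $R_\infty$ of this tower, obtains a factorization $m=(\iota h_\infty)\circ j_\infty$ with $j_\infty$ still monic, and reapplies the universal property of Theorem~\ref{thm:1} at $R_\infty$; this should force the tower to stabilize and pin $j_\infty$, hence $j$ and hence $m$, down to an isomorphism. Everything before this last step is a routine diagram chase using only Theorem~\ref{thm:1} and Assumptions~\ref{A:1} and~\ref{A:3}; arranging the iteration so that it terminates is the delicate point.
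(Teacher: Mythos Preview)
Your overall strategy coincides with the paper's: reduce epimorphy to an extremality statement via equalizers, apply Assumption~\ref{A:3} to the resulting monomorphism to extract an $\calX$-factor, build a competing square completion, and invoke the initiality of $\tau_F(f)$ from Theorem~\ref{thm:1}. The paper's argument and yours are identical up through the point where one obtains $h\circ\phi=\mathrm{id}_{b'}$ (in your notation), i.e.\ that the $\calX$-factor is a split epimorphism. Where you diverge is precisely at the step you flag: the paper does \emph{not} iterate. After obtaining its section from universality it simply asserts ``by the universality of $\tau_F(f)$, the morphism $\psi$ must be an isomorphism,'' and then immediately concludes the equalizer is an isomorphism. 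No tower, no limit.

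Your instinct that this step is the crux is correct, but your proposed transfinite fix is a genuine gap, not a proof. Iterating Assumption~\ref{A:3} on the residual monic $j$ produces a tower $\cdots\to R_2\to R_1\to R\to b'$ of split epimorphisms in $\calX$, and completeness of $\calX$ together with continuity of $\iota$ does yield a limit $R_\infty$; but nothing in Assumptions~\ref{A:1} and~\ref{A:3} alone forces this tower to stabilise, nor does reapplying the universal property at $R_\infty$ pin anything down---you just get yet another section and can keep going. There is no well-foundedness, no size bound, and no balancedness hypothesis (Assumptions~\ref{A:4} and~\ref{A:5} are \emph{not} assumed in Theorem~\ref{thm:3}) to terminate the process. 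As written, the last paragraph is a hope rather than an argument; you should either find a direct reason the first $j$ is already invertible, or accept the paper's one-line closure of the argument.
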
 

The proof requires the following lemma :

\begin{lemma} \label{lem:Eq_inj}
	In any category, an equalizer is injective.
\end{lemma}  

\paragraph{Proof of Theorem \ref{thm:3}} To prove surjectivity we need to show that for any pair of morphisms $\alpha, \beta : \iota B \to C$, if $\alpha \circ \tau_F{f} = \beta \circ \tau_F{f}$ then $\alpha = \beta$. Since $\calY$ is complete it has equalizers. Consider the following diagram in which the equalizer of $\alpha, \beta$ has been shown. 
\[\begin{tikzcd}
	& D \arrow{d}{\text{Eq}(\alpha, \beta)"} \arrow[rrd, bend left] & & \\
	\iota A \arrow[r, "\tau_F(a)"'] \arrow[ru, "\exists! \phi", dotted] & \iota B \arrow[r, "\alpha"'] \arrow[rd, "\beta"', bend right] & C & C \arrow[l, "\cong"] \arrow[ld, "\cong", bend left] \\
	& & C & 
\end{tikzcd}\]
Since the equalizer is by definition, the universal morphism $\gamma$ such that $\beta \gamma = \alpha \gamma$, the morphism $\tau_F(f)$ must factor through the equalizer via the morphism $\phi$ as shown. Now by Lemma \ref{lem:Eq_inj}, the morphism $\text{Eq}(\alpha, \beta)$ is injective. By Assumption \ref{A:3}, $\text{Eq}(\alpha, \beta)$ factorizes as shown below.
\[\begin{tikzcd}
	& & D \arrow{d}{\text{Eq}(\alpha, \beta)"} \arrow[rr, "f"] & & \iota E \arrow[lld, "\iota \psi"] \\
	\iota A \arrow[rr, "\tau_F(a)"'] \arrow[rru, "\phi"] & & \iota B & & 
\end{tikzcd}\]
This commutation diagram can be joined with the definition of $\tau_F(f)$  to get 
\[\begin{tikzcd}
	& & D \arrow{d}{\text{Eq}(\alpha, \beta)"} \arrow[rr, "f"] & & \iota E \arrow[lld, "\iota \psi"] \arrow[lldd, "\iota (b\circ \psi)", dotted, bend left] \\
	\iota A \arrow[rr, "\tau_F(a)"'] \arrow[rru, "\phi"] \arrow[d, "\iota a"'] \arrow[rrrru, dashed, bend left=50] & & \iota B \arrow[d, "\iota b"] & & \\
	\iota \Omega \arrow[rr, "F"'] & & \iota \Omega' & & 
\end{tikzcd}\]
By the universality of $\tau_F(f)$, the morphism $\psi$ must be an isomorphism. This would mean that $\text{Eq}(\alpha, \beta)$ is an isomorphism too. This in turn implies that $\alpha=\beta$, which was our goal. This completes the proof of Theorem \ref{thm:3}. \qed 

We have been examining the particular instance of \eqref{eqn:abc} when $\calA = \calB = \calX$, and $\calC = \calY$, and both functors $\alpha, \beta$ are  $\iota : \calX \to \calY$. In that case, the diagram \eqref{eqn:def:Dyn} becomes
\[\begin{tikzcd}
	\calX & \Comma{\iota}{\iota} \arrow[l, "\Forget_1"'] \arrow[r, "\Forget_2"] & \calX \\
	\Comma{\calX}{\Omega} \arrow[u, "\Forget_2"] \arrow[ru, "\tau_F"'] \arrow[rr, "\Dyn_F", dotted] & & \Comma{\calX}{\Omega'} \arrow[u, "\Forget_2"]
\end{tikzcd}\]
One of the consequences of equating $\calA$ and $\calB$ is that the functor described by Theorems \ref{thm:1} and \ref{thm:2} are between slices of the same category. Our goal is to investigate the composability of the horizontal arrows in the bottom row. To gain a precise footing, we assume

\begin{figure} [!t]
	\begin{tikzpicture}[scale=0.55, transform shape, framed, background rectangle/.style={double, ultra thick, draw=gray, rounded corners}]
		\node (thm0) [style={rect6}] at (0, -\rowA) {Theorem \ref{thm:0} : Functor from codomain category into category of left slices };
		\node (thm1) [style={rect6}] at (1.2\columnA, \rowA) { Theorem \ref{thm:1} : induced functor between left slice categories };
		\node (thm2) [style={rect6}] at (1.2\columnA, 0) { Theorem \ref{thm:2} : Commutation \eqref{eqn:thm:2} };
		\node (thm3) [style={rect6}] at (1.2\columnA, -\rowA) { Theorem \ref{thm:3} : surjectivity of the induced morphism };
		\node (thm4) [style={rect6}] at (1.2\columnA, -2\rowA) { Theorem \ref{thm:4} : compositionality };
		\node (A1) [style={rect5}] at (0.0\columnA, 0.0\rowA) { Assumption \ref{A:1} };
		\node (A2) [style={rect5}] at (0.0\columnA, 1.0\rowA) { Assumption \ref{A:2} };
		\node (A3) [style={rect5}] at (2.4\columnA, -1.0\rowA) { Assumption \ref{A:3} };
		\node (A4) [style={rect5}] at (2.4\columnA, -2\rowA) { Assumption \ref{A:4} };
		\node (A5) [style={rect5}] at (0, -2\rowA) { Assumption \ref{A:5} };
		\node (master) [style={rect2}] at (2.4\columnA, 0) { Construction \eqref{eqn:nc0s3} };
		\node (6) [style={rect2}] at (2.4\columnA, \rowA) { Lim-Pre construction \eqref{eqn:LimPre} };
		\node (7) [style={rect2}] at (3.5\columnA, 1\rowA) { Corner categories \eqref{eqn:ArrowComma3} };
		\node (8) [style={rect2}] at (3.5\columnA, 0.0\rowA) { Lemmas \ref{lem:frgt_adjnt}, \ref{lem:jd9lk3} };
		\node (9) [style={rect2}] at (3.5\columnA, -1\rowA) { Lemma \ref{lem:LimPre:extend} };
		\node (10) [style={rect2}] at (3.5\columnA, -2\rowA) { Pullback \eqref{eqn:T_T2_T3} };
		\draw[-to] (A2) to (thm1);
		\draw[-to] (A2) to (A1);
		\draw[-to] (A1) to (thm0);
		\draw[-to] (A3) to (thm3);
		\draw[-to] (thm1) to (thm0);
		\draw[-to] (thm2) to (thm0);
		\draw[-to] (thm3) to (thm4);
		\draw[-to] (thm4) to (thm0);
		\draw[-to] (A4) to (thm4);
		\draw[-to] (A5) to (thm4);
		\draw[-to] (master) to (thm2);
		\draw[-to] (6) to (thm1);
		\draw[-to] (7) to (master);
		\draw[-to] (8) to (master);
		\draw[-to] (9) to (master);
		\draw[-to] (10) to (master);
	\end{tikzpicture}
	\caption{Outline of the results, assumptions, and their logical dependence. The white boxes display the various assumptions, and grey boxes display the main results.}
	\label{fig:outline1}
\end{figure}
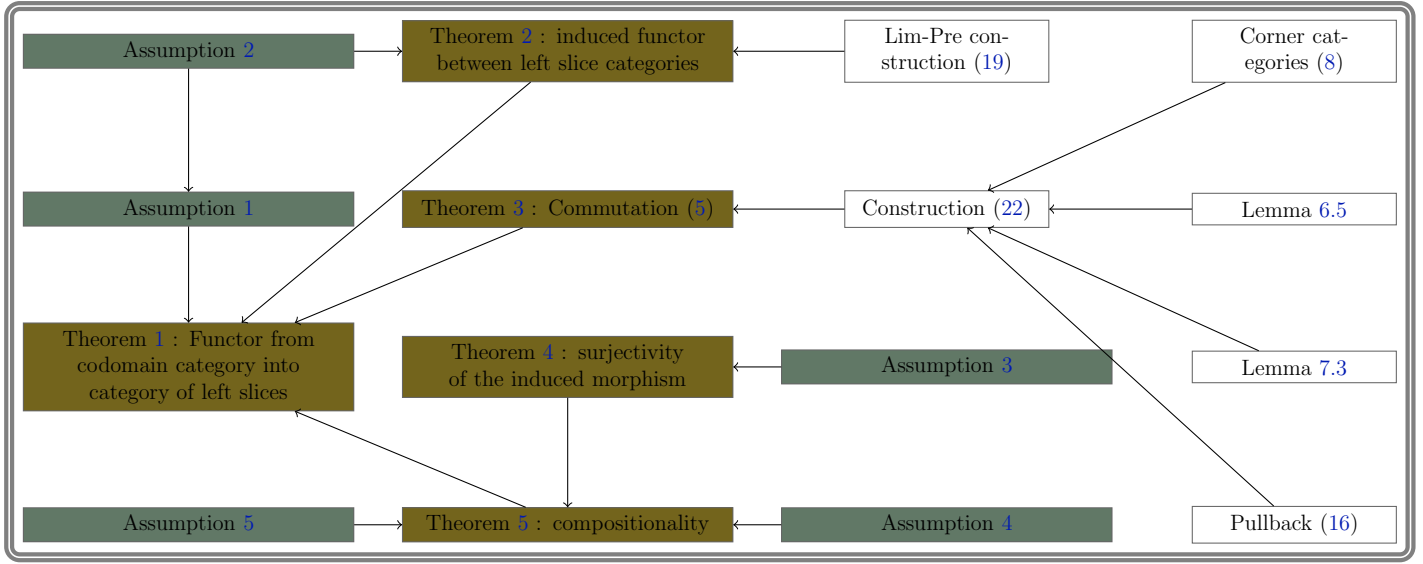

\begin{theorem}[Compositionality of induced functors] \label{thm:4}
	Suppose Assumptions \ref{A:1} \ref{A:4} and \ref{A:5} hold. Then there is a functor 
	\[ \iota(\calX) \xrightarrow{\tau} \LSlice{ \calX } , \]
	which maps an morphism $\iota \Omega \xrightarrow{F} \iota \Omega'$ into $\Comma{\calX}{\Omega} \xrightarrow{\tau_F} \Comma{\calX}{\Omega'}$.
\end{theorem}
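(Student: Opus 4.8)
}
The plan is to set $\tau(\Omega)=\Comma{\calX}{\Omega}$ on objects and $\tau(F)=\tau_F$ on morphisms, where for $\iota\Omega\xrightarrow{F}\iota\Omega'$ the functor $\tau_F\colon\Comma{\calX}{\Omega}\to\Comma{\calX}{\Omega'}$ is the one produced by Theorem~\ref{thm:1} (specialised to $\calA=\calB=\calX$, $\calC=\calY$, $\alpha=\beta=\iota$) together with the square \eqref{eqn:def:Dyn}, and then to verify the two functor axioms. The only extra input is the \emph{characterising} property of $\tau_F$: for $a\colon A\to\Omega$ the pair $\tau_F(a)=(b\colon B\to\Omega')$, with associated top edge $\iota A\to\iota B$ of \eqref{eqn:thm:0:1}, is the \emph{initial} commuting square over the L-diagram $(\iota a,F)$, in the precise sense of \eqref{eqn:thm:0:2}. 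Preservation of identities is then immediate: over $(\iota a,\Id_{\iota\Omega})$ the square with top edge $\Id_{\iota A}$ and right edge $\iota a$ is visibly initial, since the comparison morphism to any competing square $(B',b',f)$ is forced to be $f$ itself; hence $\tau_{\Id_{\iota\Omega}}=\Id_{\Comma{\calX}{\Omega}}$, using only the existence part (Assumption~\ref{A:1}).

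The substance is a pasting lemma giving $\tau_{G\circ F}=\tau_G\circ\tau_F$ for composable $\iota\Omega\xrightarrow{F}\iota\Omega'\xrightarrow{G}\iota\Omega''$. Fix $a\colon A\to\Omega$; write $\tau_F(a)=(b\colon B\to\Omega')$ with top edge $t_1\colon\iota A\to\iota B$, and $\tau_G(b)=(c\colon C\to\Omega'')$ with top edge $t_2\colon\iota B\to\iota C$, so that $\tau_G(\tau_F(a))=(c\colon C\to\Omega'')$. Vertically stacking the two structure squares of \eqref{eqn:thm:0:1} produces a commuting square over $(\iota a,G\circ F)$ with top edge $t_2\circ t_1$ and right edge $\iota c$. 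I would show that this \emph{pasted square is again initial}, which is exactly the assertion $\tau_G(\tau_F(a))\cong\tau_{G\circ F}(a)$. Writing $\tau_{G\circ F}(a)=(d\colon D\to\Omega'')$ with top edge $s\colon\iota A\to\iota D$, initiality of this square applied to the pasted one yields a unique $\phi\colon\iota D\to\iota C$ with $\phi\circ s=t_2\circ t_1$ and $\iota c\circ\phi=\iota d$. The decisive point is that $\phi$ is an isomorphism: it is monic because $\iota c\circ\phi=\iota d$ and $\iota d$ is monic by Assumption~\ref{A:4} (as $d$ is a morphism of $\calX$); it is epic because $\phi\circ s=t_2\circ t_1$ while $t_1$ and $t_2$ are both epic by Theorem~\ref{thm:3}; and Assumption~\ref{A:5} (balancedness of $\calY$) then forces $\phi$ to be invertible. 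An object isomorphic to an initial one is initial, so the pasted square is initial over $(\iota a,G\circ F)$, and the uniqueness clause of \eqref{eqn:thm:0:2} shows further that these comparison isomorphisms are natural in $a$ and conjugate $\tau_G\tau_F(\psi)$ to $\tau_{G\circ F}(\psi)$ on morphisms $\psi$ of the slice.

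To promote this natural isomorphism to the equality of functors demanded by the statement, I would appeal to the explicit construction of $\tau_F$ in Section~\ref{sec:proof:2}: one checks that it makes a coherent choice of universal completion, so that iterating it reproduces \emph{on the nose} the completion of the composite; failing that, one rectifies the resulting normal pseudofunctor $\iota(\calX)\to\CatCat$ into an isomorphic strict one, which is harmless since $\LSlice{\calX}$ is a full subcategory of $\CatCat$. I expect this to be the main obstacle — not the pasting lemma, which is short, but (i) keeping the chosen universal squares rigid enough that $\tau$ is strictly functorial, and (ii) checking that the comparison $\phi$ is realised \emph{inside} $\Comma{\calX}{\Omega''}$, not merely in $\calY$; resolving (ii) is presumably exactly why the detailed Section~\ref{sec:proof:2} construction, and not just the universal property, is needed. (Note that the appeal to Theorem~\ref{thm:3} also quietly brings Assumption~\ref{A:3} into the hypotheses.)
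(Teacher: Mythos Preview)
Your proposal is correct and follows essentially the same route as the paper: paste the two universal squares, obtain the comparison morphism $\phi$ from the universal property of $\tau_{G\circ F}(a)$, show $\phi$ is epic (via Theorem~\ref{thm:3} and the fact that the last factor of an epic composite is epic) and monic (via Assumption~\ref{A:4}), and invoke balancedness (Assumption~\ref{A:5}) to conclude $\phi$ is an isomorphism. The paper's proof is exactly this, phrased with the auxiliary Lemma~\ref{lem:surj_right_cancel} in place of your direct ``last factor of an epi is epi'' observation, and using Assumption~\ref{A:4} applied directly to $\iota\phi$ rather than your ``first factor of a mono is mono'' argument; both variants are equivalent.

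You are in fact more careful than the paper on two points. First, the paper stops at ``equal up to isomorphism'' and does not discuss the passage from pseudofunctor to strict functor, nor identity preservation; your remarks about coherence and rectification address a genuine gap the paper leaves implicit. Second, your parenthetical observation is correct: the appeal to Theorem~\ref{thm:3} does require Assumption~\ref{A:3}, which is missing from the stated hypotheses of Theorem~\ref{thm:4} (it reappears in Theorem~\ref{thm:0}, which bundles all four assumptions).
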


\paragraph{Remark} Theorem \ref{thm:4} essentially says that the correspondence of $\Dyn_F$ with $F$ preserves composition. This leads to the following diagram :
\[ \begin{tikzcd} \iota \Omega \arrow[dashed,  bend right = 40]{dd}[swap]{F'\circ F} \arrow{d}{F} \\ \iota \Omega' \arrow{d}{F'} \\ \iota \Omega'' \end{tikzcd} \imply 
\begin{tikzcd}
	\Comma{\calX}{\Omega} \arrow[d, "\tau_F"] \arrow[rr, "\Dyn_F"] \arrow[dd, "\tau_{F'\circ F}"', bend right=49] \arrow[rrrr, "\Dyn_{F'\circ F}", bend left=30] & & \Comma{\calX}{\Omega'} \arrow[d, "\tau_F"] \arrow[ld, "\Forget_1"'] \arrow[rr, "\Dyn_{F'}"] & & \Comma{\calX}{\Omega''} \arrow[ld, "\Forget_21"'] \arrow[dd, "\Forget_1"] \\
	\Comma{\iota}{\iota} \arrow[r, "\Forget_2"] & \calX & \Comma{\iota}{\iota} \arrow[r, "\Forget_2"] & \calX & \\
	\Comma{\iota}{\iota} \arrow[rrrr, "\Forget_2"] & & & & \calX 
\end{tikzcd}\]
The upper commuting loop is the statement of Theorem \ref{thm:4}. The outer commutating loop, along with the two smaller loops are a consequence of \eqref{eqn:def:Dyn}.

\begin{lemma} \label{lem:surj_right_cancel}
	In any category $\calC$, if $f,g$ are two composable morphisms such that $f, g\circ f$ are surjective, then $g$ is also surjective.
\end{lemma}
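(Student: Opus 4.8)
The plan is to unwind the definition of surjectivity (i.e.\ of being an epimorphism) given earlier in the paper and perform a single right-cancellation. Write $f : A \to B$ and $g : B \to C$, so that the composite $g \circ f : A \to C$ is defined. To show that $g$ is surjective I would take an arbitrary pair of morphisms $u, v : C \to D$ satisfying $u \circ g = v \circ g$, and aim to conclude $u = v$.

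The key step is to pre-compose the hypothesis $u \circ g = v \circ g$ with $f$ on the right and use associativity:
\[ u \circ (g \circ f) = (u \circ g) \circ f = (v \circ g) \circ f = v \circ (g \circ f) . \]
Since $g \circ f$ is surjective by assumption, its cancellation property applied to the composable pair $u, v$ forces $u = v$, which is exactly what is required. Note that only the surjectivity of $g \circ f$ is actually used in this direction; the extra hypothesis that $f$ is surjective is harmless and is stated merely because this lemma is the natural "right-cancellation" companion to the elementary fact that a composite of epimorphisms is an epimorphism.

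There is essentially no obstacle here: the whole argument is a two-line diagram chase relying only on associativity of composition and on the universal cancellation property that defines an epimorphism. The sole point requiring care is bookkeeping of the direction of composition — one must make sure that $u$ and $v$ are post-composed with $g$, and hence with $g \circ f$, so that the epimorphism hypothesis on $g \circ f$ applies verbatim rather than in some dualized form.
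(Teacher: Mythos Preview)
Your proof is correct and essentially identical to the paper's own argument: the paper too takes two parallel morphisms $\alpha,\beta$ with $\alpha g = \beta g$, pre-composes with $f$, and invokes the surjectivity of $gf$ to conclude $\alpha=\beta$. Your observation that the hypothesis ``$f$ is surjective'' is not actually used is also accurate --- the paper's proof likewise never invokes it.
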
 

\begin{proof} We need to shown that for any morphisms $\alpha, \beta$ such that $\alpha g = \beta g$, $\alpha$ must equal $\beta$. Now note that
	\[ \alpha \paran{g f} = \paran{\alpha g} f = \paran{\beta g} f = \beta \paran{gf} . \]
	Since $gf$ is surjective, we must have $\alpha=\beta$, proving the claim. 
\end{proof}

\paragraph{Proof of Theorem \ref{thm:4}} We start with the following setup : %
\[\begin{tikzcd}
	\iota A \arrow[d, "\iota a"']  \\
	\iota \Omega \arrow[rr, "F"'] & & \iota \Omega' \arrow[rr, "F'"'] & & \iota \Omega'' & 
\end{tikzcd}\]
This contains an object $A\in \Comma{\calX}{\Omega}$, and two composable morphisms $F, F' \in \Comma{\iota}{\iota}$. We can apply the functors $\tau_F$ and $\tau_{F'}$ in succession to get
\[\begin{tikzcd}
	\iota A \arrow[d, "\iota a"'] \arrow[rr, "\tau_{F} (a)"] & & \iota B \arrow[d, "\iota b"] \arrow[rr, "\tau_{F'} (b)"] & & \iota C \arrow[d, "\iota c"']  & \\ 
	\iota \Omega \arrow[rr, "F"'] & & \iota \Omega' \arrow[rr, "F'"'] & & \iota \Omega'' & 
\end{tikzcd}\]
To prove Theorem \ref{thm:4}, it has to be shown that the composition along the morphisms in the upper row equals $\tau_{F'\circ F}$. The object $\tau_{F'\circ F}(a)$ itself can be drawn as shown below :
\[\begin{tikzcd}
	& & & & & \iota D \arrow[ldd, "\iota d", bend left]  \arrow[ld, "! \iota \phi"'] \\
	\iota A \arrow[d, "\iota a"'] \arrow[rr, "\tau_{F} (a)"] \arrow[rrrrru, "\tau_{F'\circ F} (a)", bend left] & & \iota B \arrow[d, "\iota b"] \arrow[rr, "\tau_{F'} (b)"] & & \iota C \arrow[d, "\iota c"'] & \\
	\iota \Omega \arrow[rr, "F"'] & & \iota \Omega' \arrow[rr, "F'"'] & & \iota \Omega'' & 
\end{tikzcd}\]
The connecting morphism $\phi : D \to C$ exists by the minimality of $\tau_{F'\circ F}(a)$. The upper commuting loop can be expressed as
\[ \tau_{F'} (b) \circ \tau_{F} (a) = \iota \phi \circ \tau_{F'\circ F} (a) . \]
By Theorem \ref{thm:3}, all the three morphisms $\tau_{F} (a)$, $\tau_{F'} (b)$ and $\tau_{F'\circ F} (a)$ are surjective. Thus by Lemma \ref{lem:surj_right_cancel}, $\iota \phi$ must be surjective too. By Assumption \ref{A:4}, $\iota \phi$ is also injective. Thus by Assumption \ref{A:5}, $\iota \phi$ is an isomorphism. This implies that $\tau_{F'} (b) \circ \tau_{F} (a)$ and $\tau_{F'\circ F} (a)$ are equal up to isomorphism. This completes the proof of Theorem \ref{thm:4}. \qed 

This completes the statement of our main results. The proof of Theorem \ref{thm:0} can now be completed.

\paragraph{Proof of Theorem \ref{thm:0}} Note that Assumption \ref{A:1} in Theorem \ref{thm:0} is a special case of Assumption \ref{A:2}. As a result we can build the induced functors $\bar{\tau}_F$ and $\tau_F$ from Theorems \ref{thm:1} and \ref{thm:2} respectively. Since $\iota$ is assumed to be injective on objects, the objects of $\iota(\calX)$ are in bijection with the object of $\calX$. Thus each object in $\iota(\calX)$ corresponds to a unique image $\iota \Omega$, for some $\Omega \in ob(\calX)$. The functoriality now follows from Theorem \ref{thm:4}. This completes the proof of Theorem \ref{thm:0}. \qed 

Theorems \ref{thm:1} and \ref{thm:2} remain to be proven. The proofs require building a deeper insight into the inter-relations between comma, arrow, and slice categories. We build this insight over the course of three sections \ref{sec:comma_arr}, \ref{sec:comma_adj} and \ref{sec:LimPre}. In the next section, we complete the proof of Theorem \ref{thm:0}.

\section{Comma and arrow categories} \label{sec:comma_arr}

In this section we take a deeper look into the commutation squares in comma categories. We assume throughout this section the general arrangement of \eqref{eqn:abc}, and the resultant comma category $\Comma{\alpha}{\beta}$. We have seen how an  arrow category is a special instance of a comma category. In this section we are interested in the arrow category of the comma category : $\ArrowCat{ \Comma{\alpha}{\beta} }$. The objects of this category are commutations of the form
\begin{equation} \label{eqn:ijdo4}
	\begin{tikzcd}
		\alpha a \arrow[r, "\phi"] \arrow[d, "\alpha f"'] & \beta b \arrow[d, "\beta g"] \\
		\alpha a' \arrow[r, "\phi'"] & \beta b' 
	\end{tikzcd}, \quad a,a' \in ob(\calA) , \; b, b' \in ob(\calB) .
\end{equation}
The vertical morphisms lie in $\calC$ while the horizontal morphisms are the images of morphisms in $\calA$ and $\calB$. The key to proving our results is the realization that the different pieces of \eqref{eqn:ijdo4} are also comma categories of various kinds. Let us consider the lower left and upper right corners of \eqref{eqn:ijdo4} :
\[\begin{tikzcd} \alpha a \arrow[d, "\alpha f"'] \\ \alpha a' \arrow[r, "\phi'"] & \beta b' \end{tikzcd} , \quad \quad 
\begin{tikzcd} \alpha a \arrow[r, "\phi"] & \beta b \arrow[d, "\beta g"] \\ & \beta b' \end{tikzcd}\]
This first diagram is an object of the comma category 
\[ \DL{\alpha}{\beta}{\calA} := \Comma{ \Id_{\calA} }{ \Forget_1^{ \Comma{\alpha}{\beta} } } \]
The initials DL indicates "down-left", the position of an object of this category relative to an object of $\ArrowCat{ \Comma{\alpha}{\beta} }$ \eqref{eqn:ijdo4}. Similarly, the upper-right corner is an object of the category
\[ \UR{\alpha}{\beta}{\calB} := \Comma{ \Forget_2^{ \Comma{\alpha}{\beta} } }{ \Id_{\calB} } . \]
Both the categories $\DL{\alpha}{\beta}{\calA}$ and $\UR{\alpha}{\beta}{\calB}$ can be written more expressively as
\[\left[ \begin{tikzcd} \calA \arrow[bend right=20]{dr}[swap]{\Id} & & \Comma{\alpha}{\beta} \arrow[bend left=20]{dl}{\Forget_1} \\ & \calA \end{tikzcd}\right] , \quad \quad \left[ \begin{tikzcd} \Comma{\alpha}{\beta} \arrow[bend right=20]{dr}[swap]{\Forget_2} & & \calB \arrow[bend left=20]{dl}{\Id} \\ & \calB \end{tikzcd}\right] . \]
One can proceed similarly to describe each of the other two corners of \eqref{eqn:ijdo4} as categories. This leads to the following layout of the arrow category and its corner categories :
\begin{equation} \label{eqn:ArrowComma2}
	\begin{tikzpicture}[scale=0.5, transform shape]
		\node (0) at (\columnA, -\rowA) {$\ArrowCat{ \Comma{\alpha}{\beta} }$};
		\node (dl) at (0, -\rowA) {$\left[ \begin{tikzcd} \calA \arrow[bend right=20]{dr}[swap]{\Id} & & \Comma{\alpha}{\beta} \arrow[bend left=20]{dl}{\Forget_1} \\ & \calA \end{tikzcd}\right]$};
		\node (ul) at (0, 0.5\rowA) {$\left[ \begin{tikzcd} \Comma{\alpha}{\beta} \arrow[bend right=20]{dr}[swap]{\Forget_1} & & \calA \arrow[bend left=20]{dl}{\Id} \\ & \calA \end{tikzcd}\right]$};
		\node (ur) at (2\columnA, 0.5\rowA) {$\left[ \begin{tikzcd} \Comma{\alpha}{\beta} \arrow[bend right=20]{dr}[swap]{\Forget_2} & & \calB \arrow[bend left=20]{dl}{\Id} \\ & \calB \end{tikzcd}\right]$};
		\node (dr) at (2\columnA, -\rowA) {$\left[ \begin{tikzcd} \calB \arrow[bend right=20]{dr}[swap]{\Id} & & \Comma{\alpha}{\beta} \arrow[bend left=20]{dl}{\Forget_2} \\ & \calB \end{tikzcd}\right]$};
		\node (lm) at (-\columnA, -1.5\rowA) {$\calA$};
		\node (rm) at (3\columnA, -1.5\rowA) {$\calB$};
		\node (dm) at (\columnA, -3\rowA) {$\Comma{\alpha}{\beta}$};
		\node (um) at (\columnA, 0.5\rowA) {$\Comma{\alpha}{\beta}$};
		\draw[-to] (0) to (dl);
		\draw[-to] (0) to (ul);
		\draw[-to] (0) to (dr);
		\draw[-to] (0) to (ur);
		\draw[-to] (ul) to (um);
		\draw[-to] (ur) to (um);
		\draw[-to] (dl) to (dm);
		\draw[-to] (dr) to (dm);
		\draw[-to] (ul) to (lm);
		\draw[-to] (ur) to (rm);
		\draw[-to] (dm) to (lm);
		\draw[-to] (dm) to (rm);
	\end{tikzpicture}
\end{equation}
The arrows connecting the comma categories are functors, created from the forgetfull functors associated with the arrow category. The commutative diagram in \eqref{eqn:ijdo4} is an object in the central category of this diagram. The image of \eqref{eqn:ijdo4} under the various functors of \eqref{eqn:ArrowComma2} are displayed below :
\[\begin{tikzpicture}[scale=0.5, transform shape]
	\node (0) at (\columnA, -\rowA) { \begin{tikzcd} \Holud{\alpha(a)} \arrow{d}[swap]{\alpha f} \arrow[Holud]{r}{\phi} & \Holud{\beta(b)} \arrow{d}{\beta g} \\ \akashi{ \alpha(a') } \arrow[Akashi]{r}{\phi'} & \akashi{ \beta(b') } \end{tikzcd} };
	\node (dl) at (0, -\rowA) {\begin{tikzcd} \Holud{\alpha(a)} \arrow{d}[swap]{\alpha f} \\ \akashi{ \alpha(a') } \arrow[Akashi]{r}{\phi'} & \akashi{ \beta(b') } \end{tikzcd}};
	\node (ul) at (0, 0.5\rowA) {\begin{tikzcd} \Holud{\alpha(a)} \arrow{d}[swap]{\alpha f} \arrow[Holud]{r}{\phi} & \Holud{\beta(b)} \\ \akashi{ \alpha(a') } \end{tikzcd}};
	\node (ur) at (2\columnA, 0.5\rowA) {\begin{tikzcd} \Holud{ \alpha(a) } \arrow[Holud]{r}{\phi} & \Holud{ \beta(b) } \arrow{d}{\beta g} \\ & \akashi{ \beta(b') } \end{tikzcd}};
	\node (dr) at (2\columnA, -\rowA) {\begin{tikzcd} & \Holud{\beta(b)} \arrow{d}{\beta g} \\ \akashi{ \alpha(a') } \arrow[Akashi]{r}{\phi'} & \akashi{ \beta(b') } \end{tikzcd}};
	\node (lm) at (-\columnA, -1.5\rowA) {\akashi{a'}};
	\node (rm) at (3\columnA, -1.5\rowA) {\Holud{b'}};
	\node (dm) at (\columnA, -3\rowA) {\begin{tikzcd} \akashi{ \alpha(a') } \arrow[Akashi]{r}{\phi'} & \akashi{ \beta(b') } \end{tikzcd}};
	\node (um) at (\columnA, 0.5\rowA) {\begin{tikzcd} \Holud{\alpha(a)} \arrow[Holud]{r}{\phi} & \Holud{\beta(b)} \end{tikzcd}};
	\draw[-to] (0) to (dl);
	\draw[-to] (0) to (ul);
	\draw[-to] (0) to (dr);
	\draw[-to] (0) to (ur);
	\draw[-to] (ul) to (um);
	\draw[-to] (ur) to (um);
	\draw[-to] (dl) to (dm);
	\draw[-to] (dr) to (dm);
	\draw[-to] (ul) to (lm);
	\draw[-to] (ur) to (rm);
	\draw[-to] (dm) to (lm);
	\draw[-to] (dm) to (rm);
\end{tikzpicture}\]
The corner categories, which have been presented pictorially, can be written more succinctly as comma categories :
\begin{equation} \label{eqn:ArrowComma3}
	\begin{tikzcd}[column sep = large, row sep = large]
		\calA & \Comma{\alpha}{\beta} \arrow{l}[swap]{\Forget^{ \Comma{\alpha}{\beta} }_1} \arrow{r}{\Forget^{ \Comma{\alpha}{\beta} }_2} & \calB \\
		\Comma{ \Forget^{ \Comma{\alpha}{\beta} }_1 }{ \Id_{\calA} } \arrow{ur}[swap]{\Forget_1} \arrow[Akashi, bend right = 90]{ddd}[swap]{\Forget_2} &  & \Comma{ \Forget^{ \Comma{\alpha}{\beta} }_2 }{ \Id_{\beta} } \arrow{ul}{\Forget_1} \arrow[Akashi, bend left = 90]{ddd}{\Forget_2} \\
		& \ArrowCat{ \Comma{\alpha}{\beta} } \arrow[Shobuj]{uu}{U_{\alpha, \beta}} \arrow[Shobuj]{dd}{D_{\alpha, \beta}} \arrow[Shobuj]{dr}{DR_{\alpha, \beta}} \arrow[Shobuj]{ur}[swap]{UR_{\alpha, \beta}} \arrow[Shobuj]{dl}[swap]{DL_{\alpha, \beta}} \arrow[Shobuj]{ul}{UL_{\alpha, \beta}} & \\ 
		\Comma{ \Id_{\calA} }{ \Forget^{ \Comma{\alpha}{\beta} }_1 } \arrow{dr}{\Forget_2} \arrow[Holud, bend left=90]{uuu}{ \Forget_1} & & \Comma{ \Id_{\beta} }{ \Forget^{ \Comma{\alpha}{\beta} }_2 } \arrow{dl}[swap]{ \Forget_2 } \arrow[bend right = 90, Holud]{uuu}[swap]{ \Forget_1 } \\
		\calA & \Comma{\alpha}{\beta} \arrow{l}{\Forget_1} \arrow{r}[swap]{\Forget_2} & \calB 
	\end{tikzcd}
\end{equation}
The commutations in \eqref{eqn:ArrowComma3} will be one of the most important theoretical tools in our proofs. The green arrows labeled $U, D, UR, UL, DL, DR$ respectively represent the upper, lower, upper-right, upper-left, lower left and lower-right corners of the object in \eqref{eqn:ijdo4}. The categories $\calA, \calB$ also find their place in this diagram as the smallest ingredients of the  arrow comma category $\ArrowCat{ \Comma{\alpha}{\beta} }$.
We next shift our attention to transformations from between comma categories. 

\paragraph{Comma transformations} Consider a commuting diagram
\begin{equation} \label{eqn:gpd30}
	\begin{tikzcd}
		\calA \arrow{d}{I} \arrow[Holud]{r}{F} & \calB \arrow{d}[swap]{J} & \calC \arrow[Holud]{l}[swap]{G} \arrow{d}{K} \\
		\calA' \arrow[Akashi]{r}[swap]{F'} & \calB' & \calC' \arrow[Akashi]{l}{G'}
	\end{tikzcd}
\end{equation}
in which functors $I,J,K$ connect two comma arrangements $F,G$ and $F'G'$. Then we have

\begin{proposition} [Functors between comma categories] \label{lem:gpd30}
	Consider the arrangement of categories $\calA, \calB, \calC, \calD, \calE$ and functors $F,G,H,I, J$ from \eqref{eqn:gpd30}. Then there is an induced functor between comma categories
	\begin{equation} \label{eqn:odl9}
		\MapA{I,J,K} : \Comma{F}{G} \to \Comma{F'}{G'}, 
	\end{equation}
	where the map between objects and morphisms is as follows :
	\[\begin{tikzcd} [scale cd = 0.7] \paran{a, \phi, c} \arrow{d}{f,g} \\ \paran{a', \phi', c'} \end{tikzcd} \,=\, 
	\begin{tikzcd} [scale cd = 0.7] Fa \arrow{d}[swap]{Ff} \arrow{r}{\phi} & Gc \arrow{d}{Gg} \\ Fa' \arrow{r}[swap]{\phi'} & Gc' \end{tikzcd} \;\mapsto\; 
	\begin{tikzcd} [scale cd = 0.7] 
		F'Ia \arrow{r}{=} & JFa \arrow{d}[swap]{JFf = F'If} \arrow{r}{J\phi} & JGc \arrow{r}{=} \arrow{d}{JGg = G'Kf} & IKc \\ 
		F'Ia' \arrow{r}[swap]{=} & JFa' \arrow{r}[swap]{J\phi'} & JGc' \arrow{r}[swap]{=} & IKc
	\end{tikzcd} \,=\, 
	\begin{tikzcd} [scale cd = 0.7] \paran{Ia, J\phi, Kc} \arrow{d}{If,Kg} \\ \paran{Ia', J\phi', Kc'} \end{tikzcd}\]
	Moreover, the following commutation holds with the marginal functors :
	\begin{equation} \label{eqn:dgbw}
		\begin{tikzcd} [column sep = large]
			\calA \arrow[d, "I"'] & \Comma{F}{G} \arrow[l, "\Forget_1"'] \arrow[r, "\Forget_2"] \arrow[d, "\MapA{I,J,K}"] & \calA \arrow[d, "K"] \\
			\calA' & \Comma{F'}{G'} \arrow[l, "\Forget_1"] \arrow[r, "\Forget_2"'] & \calC'  
		\end{tikzcd}
	\end{equation}
\end{proposition}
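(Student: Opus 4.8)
The plan is to define $\MapA{I,J,K}$ directly by the formula displayed in the statement and then verify, in turn, (i) that it sends objects of $\Comma{F}{G}$ to objects of $\Comma{F'}{G'}$, (ii) that it sends morphisms to morphisms, (iii) that it preserves identities and composition, and finally (iv) that the square \eqref{eqn:dgbw} commutes. The commutativity of \eqref{eqn:gpd30}, i.e. the functor identities $F'I = JF$ and $G'K = JG$, enters in exactly one place — step (i) — so I would isolate that as the single substantive point and treat the rest as bookkeeping.

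For (i): an object of $\Comma{F}{G}$ is a triple $(a,\phi,c)$ with $\phi \in \Hom_{\calB}(Fa, Gc)$. Applying the functor $J$ yields $J\phi \in \Hom_{\calB'}(JFa, JGc)$, and the commutation of the left and right squares of \eqref{eqn:gpd30} gives the equalities of objects $JFa = F'Ia$ and $JGc = G'Kc$. Hence $J\phi \in \Hom_{\calB'}(F'Ia, G'Kc)$, so $(Ia, J\phi, Kc)$ is a genuine object of $\Comma{F'}{G'}$. For (ii): a morphism $(f,g)\colon (a,\phi,c)\to(a',\phi',c')$ is a pair $f\colon a\to a'$, $g\colon c\to c'$ with $\phi'\circ Ff = Gg\circ\phi$. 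Applying $J$ and using $F'(If) = (F'I)(f) = (JF)(f) = J(Ff)$ and likewise $G'(Kg) = J(Gg)$, the identity $\phi'\circ Ff = Gg\circ\phi$ becomes $J\phi'\circ F'(If) = G'(Kg)\circ J\phi$, which is precisely the commutation witnessing that $(If, Kg)$ is a morphism $(Ia, J\phi, Kc)\to(Ia', J\phi', Kc')$ in $\Comma{F'}{G'}$.

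For (iii): since $I$ and $K$ are functors, $\MapA{I,J,K}$ sends $(\Id_a,\Id_c)$ to $(I\Id_a, K\Id_c) = (\Id_{Ia},\Id_{Kc})$, and it sends $(f',g')\circ(f,g) = (f'f,\, g'g)$ to $(I(f'f),\, K(g'g)) = (If'\circ If,\, Kg'\circ Kg)$, which is the composite of the images of $(f,g)$ and $(f',g')$. For (iv): $\Forget_1$ extracts the first coordinate of an object and the first component of a morphism, while $\Forget_2$ extracts the last coordinate and the last component; comparing with the formula for $\MapA{I,J,K}$, one reads off $\Forget_1\circ\MapA{I,J,K}\colon (a,\phi,c)\mapsto Ia$, $(f,g)\mapsto If$, which equals $I\circ\Forget_1$, and symmetrically $\Forget_2\circ\MapA{I,J,K} = K\circ\Forget_2$. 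This establishes \eqref{eqn:dgbw}.

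I do not expect a genuine obstacle: the proof is a direct diagram chase, and the content is entirely the functoriality of $I,J,K$ together with the two square identities of \eqref{eqn:gpd30}. The only step that truly needs those identities — and hence the only one worth writing out with care — is the object-level identification $JFa = F'Ia$, $JGc = G'Kc$ used in step (i); everything else is forced once that is in place. If anything requires a remark, it is merely to note that these are equalities of objects (not just isomorphisms), which is what lets $J\phi$ be reinterpreted verbatim as a morphism in the target comma category.
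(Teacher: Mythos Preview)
Your proof is correct and complete: the four-step verification is exactly the routine diagram chase the proposition calls for, and you have correctly isolated the one substantive input, namely the equalities $JF = F'I$ and $JG = G'K$ coming from \eqref{eqn:gpd30}. The paper itself omits the proof of this proposition, so there is no approach to compare against; your direct verification is the natural argument and would serve well as the missing proof.
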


The proof of Proposition \ref{lem:gpd30} will be omitted. A particular instance of \eqref{eqn:gpd30} is shown in the center below, 
\begin{equation} \label{eqn:def:MapC}
	\begin{tikzcd} \alpha a \arrow{d}{F} \\ \beta b \end{tikzcd} \imply 
	\begin{tikzcd} [column sep = huge, scale cd = 0.8]
		\calA \arrow{d}{\Id_{\calA}} \arrow{r}{\Id_{\calA}} & \calA \arrow{d}{\Id_{\calA}} & \star \arrow[dashed]{l}[swap]{a} \arrow{d}{F} \\
		\calA \arrow{r}[swap]{\Id_{\calA}} & \calA & \Comma{\alpha}{\beta} \arrow{l}{ \Forget_1^{\Comma{\alpha}{\beta}} }
	\end{tikzcd} \imply 
	\begin{tikzcd}[scale cd = 0.8]
		\Comma{\calA}{a} \arrow[Shobuj, dotted]{dr}{ \MapC{F} := \MapA{ \Id_{\calA}, \Id_{\calA}, F} } \arrow{d}[swap]{\Forget_1} \\
		\calA & \Comma{\Id_{\calA}}{ \Forget_1^{\Comma{\alpha}{\beta}} } \arrow{l}{\Forget_1} 
	\end{tikzcd}
\end{equation}
The leftmost figure in \eqref{eqn:def:MapC} is an object $F$ in $\Comma{\alpha}{\beta}$. The middle diagram presents a simple commutation in which this object is re-interpreted as a functor. Finally, the leftmost figure presents an application of Proposition \ref{lem:gpd30} to this commutation. The dashed arrow in the above diagram indicate that it is are defined via composition. Proposition \ref{lem:gpd30} applied to the commutative diagram in the center leads to the functor $\MapA{ \Id_{\calA}, \Id_{\calA}, F}$ shown on the right. Composition with this functor leads to the functor $\MapC{F}$ shown in green on the right. The top right commutation is a consequence of \eqref{eqn:dgbw}. The action of $\MapC{F}$ can be explained simply as
\[\begin{tikzcd}
	& x \arrow[rd, Holud] \arrow[ld] \arrow[rdd, dashed, Akashi] & \\
	x' \arrow[rr, Holud] \arrow[rrd, dashed, Akashi] & & a \arrow[d, "\phi", Shobuj] \\
	& & a' 
\end{tikzcd} \quad
\begin{tikzcd} {} \arrow[rrr, mapsto, "\Phi_F"] &&& {} \end{tikzcd} \quad 
\begin{tikzcd}
	& \alpha x \arrow[rd, Holud] \arrow[ld] \arrow[rdd, dashed, Akashi] & \\
	\alpha x' \arrow[rr, Holud] \arrow[rrd, dashed, Akashi] & & \alpha a \arrow[d, "\alpha \phi", Shobuj] \arrow[Holud]{r}{F} & \beta b \arrow[Shobuj]{d}{\beta \psi} \\
	& & \alpha a' \arrow[Akashi]{r}{F'} & \beta b'
\end{tikzcd} \]
The yellow and blue sub-diagrams on the actions of $\Phi_{F}$, $\Phi_{F'}$ for different objects $F,F'\in \Comma{\alpha}{\beta}$.
Two other examples of Proposition \ref{lem:gpd30} can be found in the diagram on the left below :
\begin{equation} \label{eqn:0lkd0e}
	\begin{tikzcd}
		\Comma{\alpha}{\beta} \arrow{rr}{ \Forget_2^{ \Comma{\alpha}{\beta}} }  && \calB  & \calB \arrow[l, "="'] \\
		\Comma{\alpha}{\beta} \arrow[u, "="'] \arrow{rr}{ \Forget_2^{ \Comma{\alpha}{\beta}} } \arrow{d}[swap]{ \Forget_2^{ \Comma{\alpha}{\beta}} } && \calB \arrow[u, "="']  \arrow[d, "="'] & \star \arrow[l, "b"] \arrow[u, "b"] \arrow[d, "="'] \\
		\calB \arrow[rr, "="'] && \calB & \star \arrow[l, "b"]
	\end{tikzcd} \imply 
	\begin{tikzcd} \UR{\alpha}{\beta}{\calB} = \Comma{ \Forget_2^{ \Comma{\alpha}{\beta}} }{ \Id_{\calB} } \\ \Comma{ \Forget_2^{ \Comma{\alpha}{\beta}} }{ b } \arrow[u, Shobuj, "\text{Restrict}"'] \arrow[d, Shobuj, "\subseteq"] \\ \Comma{\calB}{b} \end{tikzcd}
\end{equation}
gain Proposition \ref{lem:gpd30} yields the trivial transformations between three categories, as indicated on the right above. We next use this functorial relation $\Phi_F$ between categories to study more complicated arrangements.

\paragraph{The dynamics map} The language of comma categories enable complex arrangements of spaces and transformations to be concisely depicted by the comma notation. Once a comma category is built, one has two forgetful or projection functors from the comma category. One can then use these functors to build comma categories of a higher level of complexity. We have already seen several examples of this constructive procedure over the course of the diagrams \eqref{eqn:ArrowComma2} and \eqref{eqn:ArrowComma3}. Another important instance arises when the functor $\MapC{F}$ from \eqref{eqn:def:MapC} is combined with a part of \eqref{eqn:ArrowComma3} to give
\[\begin{tikzcd}
	\Comma{\calA}{a} \arrow{r}[swap]{\MapC{F}} & \Comma{\alpha}{\beta} & & \ArrowCat{ \Comma{\alpha}{\beta} } \arrow{ll}{ D_{\alpha, \beta} }
\end{tikzcd}\]
Now consider any element $a' \xrightarrow{f} a$ from the slice category $\Comma{\calA}{a}$, which we just represent as $f$. This can be represented as a functor from the one point category $\star$. This leads to 
\[\begin{tikzcd}
	& \star \arrow[dashed]{d} \arrow[blue]{dl}[swap]{f} & & \\
	\Comma{\calA}{a} \arrow{r}[swap]{\MapC{F}} & \Comma{\alpha}{\beta} & & \ArrowCat{ \Comma{\alpha}{\beta} } \arrow{ll}{ D_{\alpha, \beta} }
\end{tikzcd}\]
This arrangement is also a diagram in $\CatCat$, the category of small categories. As a result we can construct its pull back, which is shown below in green :
\[\begin{tikzcd}
	& \star \arrow[dashed]{d} \arrow[blue]{dl}[swap]{f} & & \Shobuj{ \calZ \paran{ f, F } } \arrow[Shobuj, ll] \arrow[d, Shobuj] \\
	\Comma{\calA}{a} \arrow{r}[swap]{\MapC{F}} & \Comma{\alpha}{\beta} & & \ArrowCat{ \Comma{\alpha}{\beta} } \arrow{ll}{ D_{\alpha, \beta} }
\end{tikzcd}\]
The category $\calZ \paran{ f, F }$ is the full subcategory of $\ArrowCat{ \Comma{\alpha}{\beta} }$ whose objects are pairs $(F',g)$ such that
\[\begin{tikzcd}
	\alpha a' \arrow[d, "\beta f"'] \arrow[r, "F'"] & \beta b' \arrow[d, "\beta g"] \\
	\alpha a \arrow[r, "F"'] & \beta b 
\end{tikzcd}\]
The upper horizontal arrow can be recovered via the functor $U_{\alpha, \beta}$. This functor can be added to our previous arrangement to get :
\begin{equation} \label{eqn:def:tau:2}
	\begin{tikzcd}
		& \star \arrow[dashed]{d} \arrow[blue]{dl}[swap]{f} & & \Shobuj{ \calZ \paran{ f, F } } \arrow[Shobuj, ll] \arrow[d, Shobuj] \arrow[dashed, blue]{drr}{ \zeta_{f,F} } \\
		\Comma{\calA}{a} \arrow{r}[swap]{\MapC{F}} & \Comma{\alpha}{\beta} & & \ArrowCat{ \Comma{\alpha}{\beta} } \arrow{ll}{ D_{\alpha, \beta} } \arrow{rr}[swap]{ U_{\alpha, \beta} } & & \Comma{\alpha}{\beta}
	\end{tikzcd}
\end{equation}
The functor $\zeta_{f,F}$ which is created via composition, directly yields the functor we are looking for :
\begin{equation} \label{eqn:spd9f}
	\tau_F(f) := \lim \zeta_{f,F} .
\end{equation}
Equation \eqref{eqn:spd9f} is an alternative and easy route to construct the functor $\tau_F$ from Theorem \ref{thm:1}. However, this construction does not reveal the functorial nature of the correspondence between $f$ and $\lim \zeta_{f,F}$. In the following section, we describe a different route to establishing functoriality. 
\section{Adjointness in commas} \label{sec:comma_adj}

In this section the categorical properties of various comma categories and forgetful functors will be examined. The first is a classic result from Category theory :

\begin{lemma} \label{lem:comma_cocmplt} 
	\cite[Thm 3]{RydeheardBurstall988} Let $\alpha : \calA \to  \calC$ and $\beta : \calB \to  \calC$ be functors with $\alpha$ (finitely) continuous. If $\calA$ and $\calB$ are (finitely) complete, then so is the comma category $\Comma{\alpha}{\beta}$
\end{lemma}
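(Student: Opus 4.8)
The plan is to verify completeness of $\Comma{\alpha}{\beta}$ by exhibiting all small limits explicitly, computing them componentwise in $\calA$ and $\calB$ and then using the continuity of $\alpha$ to glue the two components together by a canonical structure morphism. First I would recall that a limit of a diagram $D : \calJ \to \Comma{\alpha}{\beta}$ can be constructed from limits of its two projected diagrams $\Forget_1 \circ D : \calJ \to \calA$ and $\Forget_2 \circ D : \calJ \to \calB$, which exist by the assumed completeness of $\calA$ and $\calB$. Write $a^* = \lim(\Forget_1 \circ D)$ with limiting cone $(p_j : a^* \to a_j)_{j\in\calJ}$, and $b^* = \lim(\Forget_2 \circ D)$ with limiting cone $(q_j : b^* \to b_j)_{j\in\calJ}$. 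Each object $D(j)$ carries a structure morphism $\phi_j : \alpha a_j \to \beta b_j$, and for each arrow $u : j \to j'$ in $\calJ$ the morphism $D(u) = (f_u, g_u)$ satisfies the comma square $\phi_{j'} \circ \alpha f_u = \beta g_u \circ \phi_j$.

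The key step is to produce the structure morphism $\phi^* : \alpha a^* \to \beta b^*$ for the candidate limit object. Here I would use that $\alpha$ is continuous: $\alpha a^* = \alpha(\lim \Forget_1 D) = \lim(\alpha \circ \Forget_1 D)$, with limiting cone $(\alpha p_j)_j$. The family $(\phi_j \circ \alpha p_j : \alpha a^* \to \beta b_j)_{j\in\calJ}$ forms a cone over $\beta \circ \Forget_2 D$ — the compatibility over each $u : j\to j'$ follows from the comma-square identity for $D(u)$ together with the cone identities $f_u \circ p_j = p_{j'}$ and $g_u \circ q_j = q_{j'}$. Since $b^*$ is a limit and $\beta$ need not be continuous, I instead observe that $(q_j)_j$ is a limiting cone in $\calB$, hence $(\beta q_j)_j$ need not be limiting — so the cone $(\phi_j \circ \alpha p_j)$ must be factored through $b^*$ directly in $\calB$: the assignment $j \mapsto \phi_j \circ \alpha p_j$ together with the universal property of $b^* = \lim \Forget_2 D$ does not quite apply since the target is $\beta b_j$, not $b_j$. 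The correct move: the family $(p_j)$ and the requirement that the would-be limiting cone in $\Comma{\alpha}{\beta}$ have components $(p_j, q_j')$ forces $q_j'$ to be a cone on $\Forget_2 D$, so set $q_j' = q_j$; then $\phi^*$ is obtained because $(\phi_j\circ\alpha p_j)_j$ is a cone on $\beta\Forget_2 D$ and, $\beta$ being merely a functor, this cone is carried by $b^*$ via its universal property applied in $\calB$ \emph{before} applying $\beta$ — which works precisely because each $\phi_j \circ \alpha p_j$ is already of the form $\beta(\text{something})$ only after we know $\phi^*$ exists. To avoid this circularity, the clean route is: $\calJ$-limits reduce to products and equalizers, and for these two special shapes one checks directly that the comma square can be completed, using continuity of $\alpha$ to identify $\alpha(\prod a_j)$ with $\prod \alpha a_j$ and $\alpha(\mathrm{eq})$ with the corresponding equalizer, so that the product/equalizer of the $\phi_j$ lands in the right place.

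Once $\phi^* : \alpha a^* \to \beta b^*$ is in hand, the object $(a^*, \phi^*, b^*)$ together with the cone $\big((p_j, q_j)\big)_j$ is easily checked to be a cone in $\Comma{\alpha}{\beta}$ (each $(p_j,q_j)$ satisfies the comma square by construction of $\phi^*$), and its universality follows from the universality of $a^*$ and $b^*$ componentwise: given any competing cone $\big((r_j, s_j) : (c,\psi,d) \to D(j)\big)_j$, the unique factorizations $r : c \to a^*$ and $s : d \to b^*$ in $\calA$ and $\calB$ assemble to a morphism $(r,s)$ in $\Comma{\alpha}{\beta}$ provided the square $\phi^* \circ \alpha r = \beta s \circ \psi$ holds, which one deduces by post-composing with the jointly monic family $(\beta q_j)$ — or, more safely, by appealing again to the product/equalizer presentation where the relevant family is genuinely jointly monic. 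I expect the main obstacle to be exactly this last point: since $\beta$ is not assumed continuous, one cannot treat $(\beta q_j)$ as a limiting (hence jointly monic) cone, so the uniqueness/well-definedness of the induced structure morphism must be routed through the explicit product-and-equalizer construction rather than an abstract "compute limits componentwise" slogan. I would therefore organize the proof as: (1) reduce to binary products and equalizers; (2) construct each using continuity of $\alpha$; (3) verify the universal property in each case; (4) conclude general completeness. The cited reference \cite{RydeheardBurstall988} presumably follows this template, and the finitary parenthetical version is identical with "small" replaced by "finite" throughout.
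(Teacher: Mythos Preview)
The paper does not supply a proof of this lemma; it is quoted from Rydeheard--Burstall with a bare citation, so there is no argument in the paper to compare yours against.

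Your proof, however, contains a real gap, and you have already half-diagnosed it. The circularity you flag in constructing $\phi^*$ is not resolved by passing to products and equalizers. For a product, the morphism $\prod_j \phi_j$ has domain $\prod_j \alpha a_j \cong \alpha\bigl(\prod_j a_j\bigr)$ by continuity of $\alpha$, but its codomain is $\prod_j \beta b_j$; the only canonical comparison with $\beta\bigl(\prod_j b_j\bigr)$ points the wrong way, namely $\beta\bigl(\prod_j b_j\bigr) \to \prod_j \beta b_j$, so you still cannot land in $\beta b^*$. The equalizer case is identical. The difficulty is that the continuity hypothesis, as printed, sits on the wrong functor: the componentwise construction needs $\beta$ continuous, so that $\beta b^* \cong \lim_j \beta b_j$ and the cone $(\phi_j \circ \alpha p_j)_j$ factors uniquely through it; universality then follows because $(\beta q_j)_j$ is a limiting, hence jointly monic, cone. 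This is also what the paper actually uses: Assumption~\ref{A:2} takes $\beta$ to be continuous, and the only invocation of the lemma (Lemma~\ref{lem:alph_bet_cmplt}) is under that assumption. The Rydeheard--Burstall theorem is stated for \emph{colimits}, where the roles of $\alpha$ and $\beta$ are reversed under dualization; together with the label \texttt{lem:comma\_cocmplt} this suggests a transcription slip in the statement. Your outline becomes a correct proof once the hypothesis is placed on $\beta$; under the hypothesis as written, the componentwise argument cannot be completed.
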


One immediate consequence of Lemma \ref{lem:comma_cocmplt} is :

\begin{lemma} \label{lem:alph_bet_cmplt}
	The comma category $\Comma{\alpha}{\beta}$ is complete.  
\end{lemma}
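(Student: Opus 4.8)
The plan is to derive Lemma~\ref{lem:alph_bet_cmplt} as a direct corollary of Lemma~\ref{lem:comma_cocmplt}, simply by checking that the hypotheses of the latter are met in the situation under consideration. The point is that in this section we have fixed the general arrangement \eqref{eqn:abc}, namely $\calA \xrightarrow{\alpha} \calC \xleftarrow{\beta} \calB$, together with the standing Assumption~\ref{A:2}. So the first step is to recall what Assumption~\ref{A:2} gives us: the categories $\calA$ and $\calB$ are complete, and the functor $\beta$ is continuous. (The extra data in Assumption~\ref{A:2} about initial objects and $\alpha(0_\calA) = 0_\calC$ is not needed here; only completeness of the domains and continuity of one of the two functors matters for this lemma.)

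Next I would line up these facts against the hypotheses of Lemma~\ref{lem:comma_cocmplt}. That lemma, quoted from \cite[Thm 3]{RydeheardBurstall988}, asserts that if $\alpha : \calA \to \calC$ and $\beta : \calB \to \calC$ are functors with \emph{one} of them continuous, and if $\calA$ and $\calB$ are complete, then $\Comma{\alpha}{\beta}$ is complete. There is a small bookkeeping point to be careful about: the statement of Lemma~\ref{lem:comma_cocmplt} as written names $\alpha$ as the continuous functor, whereas Assumption~\ref{A:2} declares $\beta$ to be the continuous one. By the evident left-right symmetry of the comma construction --- $\Comma{\alpha}{\beta}$ is isomorphic to $\Comma{\beta}{\alpha}^{\mathrm{op}}$-flavoured bookkeeping, or more simply, the proof of Lemma~\ref{lem:comma_cocmplt} only uses continuity of the functor out of whichever factor one computes limits "against" --- the roles of $\alpha$ and $\beta$ can be interchanged. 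So I would either invoke this symmetry explicitly in one sentence, or note that the cited theorem is stated for either factor; in any case no real work is required. Having matched the hypotheses, Lemma~\ref{lem:comma_cocmplt} applies and delivers completeness of $\Comma{\alpha}{\beta}$.

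Honestly, there is no substantive obstacle here: the lemma is a one-line consequence of a previously stated (and externally cited) result plus the standing assumptions, and its role in the paper is organizational --- it packages the completeness fact in the exact form needed later (e.g.\ for constructing the limits $\lim \zeta_{f,F}$ in \eqref{eqn:spd9f} and for the equalizer arguments in the proof of Theorem~\ref{thm:3}). The only thing demanding the slightest attention is the $\alpha$-versus-$\beta$ continuity convention noted above, and that is dispatched by the symmetry of the comma construction. So the write-up I would produce is essentially: "By Assumption~\ref{A:2}, $\calA$ and $\calB$ are complete and $\beta$ is continuous; applying Lemma~\ref{lem:comma_cocmplt} (with the roles of the two functors interchanged, which is legitimate by the symmetry of the comma category construction) yields that $\Comma{\alpha}{\beta}$ is complete. \qed"
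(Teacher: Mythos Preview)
Your proposal is correct and matches the paper's approach exactly: the paper states Lemma~\ref{lem:alph_bet_cmplt} as ``one immediate consequence of Lemma~\ref{lem:comma_cocmplt}'' with no further argument. You have in fact been more careful than the paper by flagging the $\alpha$-versus-$\beta$ continuity discrepancy between the statement of Lemma~\ref{lem:comma_cocmplt} and Assumption~\ref{A:2}; the paper glosses over this entirely.
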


We also have :

\begin{lemma} \label{lem:UR_cmplt}
	The upper right category $\UR{\alpha}{\beta}{\calB}$ is complete.  
\end{lemma}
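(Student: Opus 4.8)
The plan is to realize $\UR{\alpha}{\beta}{\calB} = \Comma{\Forget_2^{\Comma{\alpha}{\beta}}}{\Id_{\calB}}$ as a comma category to which Lemma~\ref{lem:comma_cocmplt} (equivalently, its corollary Lemma~\ref{lem:alph_bet_cmplt}) applies directly, and simply check the two hypotheses of that lemma. Explicitly, the upper-right category is built from the arrangement
\[
\begin{tikzcd}
\Comma{\alpha}{\beta} \arrow[rd, "\Forget_2"'] & & \calB \arrow[ld, "\Id_{\calB}"] \\
& \calB &
\end{tikzcd}
\]
so that the two legs of the comma construction are the functor $\Forget_2^{\Comma{\alpha}{\beta}} : \Comma{\alpha}{\beta} \to \calB$ in the role of $\alpha$, and $\Id_{\calB} : \calB \to \calB$ in the role of $\beta$. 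By Lemma~\ref{lem:comma_cocmplt}, $\UR{\alpha}{\beta}{\calB}$ is complete as soon as (i) both source categories $\Comma{\alpha}{\beta}$ and $\calB$ are complete, and (ii) the left leg $\Forget_2^{\Comma{\alpha}{\beta}}$ is continuous.

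First I would dispatch (i). The category $\calB$ is complete by Assumption~\ref{A:2} (it is one of the standing completeness hypotheses on the diagram~\eqref{eqn:abc}), and $\Comma{\alpha}{\beta}$ is complete by Lemma~\ref{lem:alph_bet_cmplt}, which was already derived above as an immediate consequence of Lemma~\ref{lem:comma_cocmplt} under Assumption~\ref{A:2}. So both source categories are complete with no further work. For (ii) I would argue that $\Forget_2^{\Comma{\alpha}{\beta}}$ preserves all limits: limits in $\Comma{\alpha}{\beta}$ are computed componentwise over the $\calA$- and $\calB$-coordinates (this is exactly the content of the construction in the proof of Lemma~\ref{lem:comma_cocmplt}), so the projection onto the $\calB$-component carries a limit cone in $\Comma{\alpha}{\beta}$ to the corresponding limit cone in $\calB$. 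Hence $\Forget_2^{\Comma{\alpha}{\beta}}$ is continuous. (One could alternatively invoke the general fact that both forgetful functors out of a comma category preserve all limits that exist in the comma category and are created from the base, but the componentwise description is the cleanest justification given what is already on the page.) Applying Lemma~\ref{lem:comma_cocmplt} with these two inputs yields that $\UR{\alpha}{\beta}{\calB}$ is complete, which is the claim.

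The only point demanding any genuine care — and thus the main obstacle — is (ii): one must be sure that $\Forget_2^{\Comma{\alpha}{\beta}}$ really is continuous and not merely limit-reflecting. This is where the asymmetry of Lemma~\ref{lem:comma_cocmplt} matters: that lemma requires the \emph{left} leg to be continuous, so it is essential that it be $\Forget_2$ (not $\Forget_1$) that sits in the $\alpha$-slot, matching the layout of $\UR{\alpha}{\beta}{\calB}$ as written in~\eqref{eqn:ArrowComma3}. Since $\Id_{\calB}$ is trivially continuous, no hypothesis is needed on the right leg, so there is no hidden obstruction there. Once the componentwise computation of limits in $\Comma{\alpha}{\beta}$ is cited, continuity of $\Forget_2$ is automatic and the proof closes.
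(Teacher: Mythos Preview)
Your approach is essentially the same as the paper's: realize $\UR{\alpha}{\beta}{\calB}$ as the comma category $\Comma{\Forget_2^{\Comma{\alpha}{\beta}}}{\Id_{\calB}}$, check completeness of the two source categories, and invoke Lemma~\ref{lem:comma_cocmplt}. The paper's proof is a single sentence citing exactly these ingredients.

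The one place you diverge is in which leg's continuity you verify. You follow the literal statement of Lemma~\ref{lem:comma_cocmplt} (left leg continuous) and therefore spend effort arguing that $\Forget_2^{\Comma{\alpha}{\beta}}$ preserves limits via the componentwise description. The paper instead cites ``the continuity of the identity functor,'' i.e.\ the right leg $\Id_{\calB}$, and says nothing about $\Forget_2$. Mathematically it is the right leg that must be continuous for comma-category completeness (one needs $G(\lim b_i)\cong \lim Gb_i$ to assemble the connecting morphism), so the paper's one-liner is the clean route, and the ``$\alpha$ continuous'' in the statement of Lemma~\ref{lem:comma_cocmplt} appears to be a typo for $\beta$; this is also consistent with Assumption~\ref{A:2} supplying continuity of $\beta$ for Lemma~\ref{lem:alph_bet_cmplt}. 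Your argument for continuity of $\Forget_2$ is correct, just superfluous once one appeals to $\Id_{\calB}$.
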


Lemma \ref{lem:UR_cmplt} follows directly from the construction of $\UR{\alpha}{\beta}{\calB}$ as a comma category in Section \ref{sec:comma_arr}, the continuity of the identity functor, and Lemma \ref{lem:comma_cocmplt}. The following basic lemma is an useful tool in establishing the existence of right adjoints. 

\begin{lemma} [Right inverse as right adjoint] \label{lem:RIRA}
	Suppose $F:\calP \to \calQ$ and $G:\calQ \to \calP$ are two functors such that $FG = \Id_{\calQ}$ and $\Id_{\calP} \Rightarrow GF$. Then $F,G$ are left and right adjoints of each other.
\end{lemma}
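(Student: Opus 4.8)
The plan is to show that the unit $\eta : \Id_{\calP} \Rightarrow GF$ together with the identity natural transformation $\epsilon = \Id_{\Id_{\calQ}} : FG = \Id_{\calQ} \Rightarrow \Id_{\calQ}$ form an adjunction $F \dashv G$. Concretely, I would verify that $(\eta, \epsilon)$ satisfies the two triangle identities, since a pair of natural transformations satisfying them is precisely the data of an adjunction. The two identities to check are
\[
\begin{tikzcd}
F \arrow[r, "F\eta"] \arrow[rr, bend right, "\Id_F"'] & FGF \arrow[r, "\epsilon F"] & F
\end{tikzcd}
\qquad\text{and}\qquad
\begin{tikzcd}
G \arrow[r, "\eta G"] \arrow[rr, bend right, "\Id_G"'] & GFG \arrow[r, "G\epsilon"] & G .
\end{tikzcd}
\]

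First I would observe that because $FG = \Id_{\calQ}$ strictly, the component $\epsilon F$ is simply the identity of $F$ (its components are $\epsilon_{Fp} = \Id_{FGFp} = \Id_{Fp}$, using $FGF = F$), so the first triangle identity reduces to $(\epsilon F)\circ (F\eta) = F\eta$; hence I need $F\eta = \Id_F$, i.e. that $F$ sends every component $\eta_p : p \to GFp$ to an identity. This should follow from applying $F$ to $\eta$ and using $FG = \Id_{\calQ}$: $F\eta_p : Fp \to FGFp = Fp$, and one argues it must be $\Id_{Fp}$ — the cleanest way is to note $\eta$ being a natural transformation into $GF$, whiskering by $F$ and using $FG=\Id$ forces $F\eta$ to be idempotent and in fact the identity, or alternatively this is an additional hypothesis one reads into "$\Id_{\calP}\Rightarrow GF$" being compatible with $FG=\Id$. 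For the second triangle, again $G\epsilon$ has components $G\Id_{\Id_{\calQ}} = \Id_G$, so it reduces to needing $\eta G = \Id_G$, i.e. $\eta_{Gq} = \Id_{Gq}$ for every $q$; since $GFGq = Gq$ this is at least a well-typed endomorphism, and one shows it is the identity by naturality of $\eta$ applied to morphisms out of $Gq$ together with the previous fact $F\eta = \Id_F$.

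The step I expect to be the main obstacle is pinning down exactly what "$\Id_{\calP} \Rightarrow GF$" is being assumed to provide — in particular whether the natural transformation is merely asserted to exist, or is assumed to interact coherently with the strict equality $FG = \Id_{\calQ}$ so that the triangle identities hold on the nose. In the generic situation a right inverse need not be a right adjoint without such coherence, so I anticipate the proof will either (a) invoke a mild additional compatibility that is implicitly part of the statement (e.g. that the whiskered transformation $F\eta$ is the identity, which is typical when $G$ is a section of $F$ arising from a reflective-type situation), or (b) use a universal-property argument: show directly that for each $q \in \calQ$ the object $Gq$ with the map $\epsilon_q = \Id_q : FGq \to q$ is universal among objects of $\calP$ mapping to $q$, i.e. every $f : Fp \to q$ factors uniquely as $\epsilon_q \circ Ff'$ for $f' : p \to Gq$, taking $f' = Gf \circ \eta_p$ and checking uniqueness via the naturality square for $\eta$. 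I would lean toward presenting route (b), as it is the most robust and makes transparent where each hypothesis is used: existence of the factorization uses $\eta$ and functoriality of $G$, while uniqueness uses naturality of $\eta$ together with $FG = \Id_{\calQ}$ to cancel. Finally I would remark that this lemma will be applied with $F$ one of the forgetful/restriction functors and $G$ its evident section, so the hypotheses are easy to check in each instance.
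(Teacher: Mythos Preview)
The paper does not actually prove this lemma; it is stated as a tool and immediately applied. Your suspicion that extra coherence is needed is correct, and in fact the lemma as literally stated is \emph{false}. Take $\calP$ to be the one-object category with endomorphism monoid $\{1,0\}$ where $0$ is absorbing ($0m=m0=0$ for all $m$), take $\calQ$ to be the terminal category, and let $F,G$ be the unique functors. Then $FG=\Id_{\calQ}$, and setting $\eta_p:=0$ gives a natural transformation $\Id_{\calP}\Rightarrow GF$ (the naturality square reads $1\cdot 0 = 0\cdot m$, i.e.\ $0=0$). Yet $F$ has no right adjoint whatsoever, since $\calP$ has no terminal object.

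Your route (b) is not more robust than route (a); it runs into exactly the same obstruction. Verifying that $f\mapsto Gf\circ\eta_p$ and $g\mapsto Fg$ are inverse bijections unwinds to the two coherence conditions you flagged: the first composite gives $F(Gf\circ\eta_p)=f\circ F\eta_p$, which equals $f$ only if $F\eta_p=\Id_{Fp}$; and for uniqueness, naturality of $\eta$ together with $FG=\Id$ gives only $GFg\circ\eta_p=\eta_{Gq}\circ g$, which equals $g$ precisely when $\eta_{Gq}=\Id_{Gq}$. There is no ``cancellation'' available beyond that. In the counterexample above $F\eta_p=\Id$ happens to hold trivially, but $\eta_{Gq}=0\neq 1$, and indeed both $0$ and $1$ are sent by $F$ to $\Id_\star$, so uniqueness fails. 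The paper's applications (Lemmas~\ref{lem:frgt_adjnt} and~\ref{lem:jd9lk3}) construct the section and the transformation explicitly, and in those concrete cases one can verify the triangle identities by hand; but as a general lemma the statement needs an additional hypothesis such as $F\eta=\Id_F$ (equivalently $\eta G=\Id_G$) to be correct.
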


\blue{These insights lead to two technical results. The first one is :}

\begin{lemma} \label{lem:frgt_adjnt}
	Consider an arrangement of categories and functors $\calP \xrightarrow{P} \calR \xleftarrow{Q} \calQ$. If $\calP$ and $\calR$ have initial elements $0_{\calP}$ and $1_{\calR}$ respectively, and $P \paran{0_{\calP}} = 0_{\calR}$, then the functor $\Forget_2 : \Comma{P}{Q}$ has a left adjoint given by
	\[ \paran{ \Forget_2 }^{(L)} : \calQ \to \Comma{P}{Q}, \quad q \mapsto \; \begin{tikzcd} P 0_{\calP} = 0_{\calR} \arrow[d, "!_{Qq}"] \\ Qq \end{tikzcd} \]
	In fact, $\paran{ \Forget_1 }^{(L)}$ is a right inverse of $\Forget_1$, i.e., $\Forget_2 \circ \paran{ \Forget_2 }^{(L)} = \Id_{\calQ}$. 
\end{lemma}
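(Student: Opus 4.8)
The plan is to check that $\paran{\Forget_2}^{(L)}$ is a well-defined functor, that it is a section of $\Forget_2$ on the nose, that there is a natural transformation from $\paran{\Forget_2}^{(L)}\circ\Forget_2$ to $\Id_{\Comma{P}{Q}}$, and then to invoke Lemma~\ref{lem:RIRA}. First I would pin down the action on morphisms: a morphism $h:q\to q'$ of $\calQ$ is sent to the pair $\paran{\Id_{0_\calP},\,h}$, regarded as a candidate morphism $\paran{0_\calP,\,!_{Qq},\,q}\to\paran{0_\calP,\,!_{Qq'},\,q'}$ in $\Comma{P}{Q}$. To see this is a legitimate comma morphism one must verify that the square with top edge $P\Id_{0_\calP}$, left edge $!_{Qq}$, bottom edge $Qh$ and right edge $!_{Qq'}$ commutes; but both composites are morphisms out of $P0_\calP=0_\calR$, which is initial in $\calR$, so they coincide. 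Functoriality (identities to identities, composites to composites) is then inherited coordinatewise, and $\Forget_2\circ\paran{\Forget_2}^{(L)}=\Id_\calQ$ is immediate: on objects $q\mapsto\paran{0_\calP,\,!_{Qq},\,q}\mapsto q$, and on arrows $h\mapsto\paran{\Id_{0_\calP},\,h}\mapsto h$.

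Next I would build the counit $\varepsilon:\paran{\Forget_2}^{(L)}\circ\Forget_2\Rightarrow\Id_{\Comma{P}{Q}}$. Its component at an object $\paran{p,\phi,q}$ should be the pair $\paran{!_p,\,\Id_q}$, where $!_p:0_\calP\to p$ is the unique arrow out of the initial object; this defines a morphism $\paran{0_\calP,\,!_{Qq},\,q}\to\paran{p,\phi,q}$ precisely when $\phi\circ P!_p=!_{Qq}$, which again holds because both sides are arrows out of the initial object $0_\calR=P0_\calP$. Naturality of $\varepsilon$ along a comma morphism $\paran{u,v}:\paran{p,\phi,q}\to\paran{p',\phi',q'}$ reduces to $u\circ{!_p}={!_{p'}}$ (uniqueness out of $0_\calP$) and $v\circ\Id_q=\Id_{q'}\circ v$, both trivial.

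Finally, the pair $\paran{\Forget_2}^{(L)},\,\Forget_2$ meets the hypotheses of Lemma~\ref{lem:RIRA}: one composite is $\Id_\calQ$ and the other carries the natural transformation $\varepsilon$ to $\Id_{\Comma{P}{Q}}$, whence $\paran{\Forget_2}^{(L)}$ is left adjoint to $\Forget_2$, with identity unit and counit $\varepsilon$. I do not anticipate a genuine obstacle here; the only points requiring care are that every commutativity check is discharged by the universal property of an initial object ($0_\calR$ for the squares in $\calR$, $0_\calP$ for the $\calP$-components), and that the orientation of the adjunction is read off correctly from the direction of $\varepsilon$ — left adjoint $\paran{\Forget_2}^{(L)}$, right adjoint $\Forget_2$. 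As a sanity check, the same conclusion follows directly from the bijection $\Hom_{\Comma{P}{Q}}\!\paran{\paran{0_\calP,!_{Qq},q},\,\paran{p,\phi,q'}}\cong\Hom_\calQ\paran{q,q'}$, which holds because the $\calP$-component of any such comma morphism is forced to be ${!_p}$ and the square condition is then automatic, and which is manifestly natural in $q$ and in $\paran{p,\phi,q'}$.
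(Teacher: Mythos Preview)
Your proof is correct and follows essentially the same route as the paper: verify that $(\Forget_2)^{(L)}$ is a section of $\Forget_2$, construct the counit $\varepsilon_{(p,\phi,q)} = (!_p,\Id_q)$ using the universal property of the initial objects, and then invoke Lemma~\ref{lem:RIRA}. The paper's proof is terser (it omits the explicit verification that $(\Forget_2)^{(L)}$ is a functor and the naturality check for $\varepsilon$), while you fill these in and add the hom-set bijection as an independent confirmation; but the argument is the same.
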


\begin{proof} 
	Since $\paran{ \Forget_1 }^{(R)}$ is a right inverse of $\Forget_1$ by Lemma \ref{lem:RIRA}, it only remains to be shown that there is a natural transformation $\paran{ \Forget_2 }^{(L)} \circ \Forget_2 \Rightarrow \Id_{\Comma{P}{Q}}$, called the \emph{counit}. The diagram on the left below traces the action of this composite functor on an object $F$ (blue) of $\Comma{P}{Q}$ into an object (green) of $\Comma{P}{Q}$ :
	\[\begin{tikzcd} Pp \arrow[d, "F"', Akashi] \\ Qq \end{tikzcd}
	\;\begin{tikzcd} {} \arrow[rr, mapsto, "\Forget_2"] && {} \end{tikzcd} \; 
	q \;\begin{tikzcd} {} \arrow[rr, mapsto, "\paran{ \Forget_2 }^{(L)}"] && {} \end{tikzcd} \; 
	\begin{tikzcd} P 0_{\calP} = 0_{\calR} \arrow[d, "!_{Qq}", Shobuj] \\ Qq \end{tikzcd} ; \quad 
	\begin{tikzcd}
		P0_{\calP} \arrow[d, "P(!_p)"', Holud] \arrow[r, "!_{Qq}", Shobuj] & Qq \arrow[d, "Q \Id_q", Holud]\\
		Pp \arrow[r, "F", Akashi] & Qq
	\end{tikzcd}\]
	The diagram on the left demonstrates a commutation arising out of the initial element preserving property. The descending yellow morphisms together constitute the connecting morphism of the counit transformation we seek. This completes the proof.
\end{proof}

The second technical results is :

\begin{lemma} \label{lem:jd9lk3}
	In the arrangement of \eqref{eqn:abc}, the forgetful functor 
	\[ \Forget_{1} : \UR{\alpha}{\beta}{\calB} := \Comma{ \Forget_2^{ \Comma{\alpha}{\beta} } }{ \Id_{\calB} } \to \Comma{\alpha}{\beta} , \]
	has a left adjoint 
	\[ \Forget_{1}^{(L)} :  \Comma{\alpha}{\beta} \to \UR{\alpha}{\beta}{\calB} , \quad 
	\begin{tikzcd} \alpha a \arrow[d, "F"'] \\ \beta b \end{tikzcd} \; 
	\begin{tikzcd} {}  \arrow[rr, mapsto] && {} \end{tikzcd} \; 
	\begin{tikzcd} \alpha a \arrow[r, "F"'] & \beta b \arrow[d, "\beta \Id_b"]\\ & \beta b  \end{tikzcd} .
	\]
\end{lemma}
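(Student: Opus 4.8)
} The plan is to argue exactly as in Lemma~\ref{lem:frgt_adjnt}: first show that $\Forget_1^{(L)}$, as written, is a functor which is a \emph{section} of $\Forget_1$ (a right inverse), and then exhibit a counit natural transformation $\Forget_1^{(L)}\circ\Forget_1 \Rightarrow \Id_{\UR{\alpha}{\beta}{\calB}}$; the adjunction $\Forget_1^{(L)}\dashv\Forget_1$ then drops out of Lemma~\ref{lem:RIRA}. The preliminary step is pure bookkeeping: one unwinds the iterated comma construction $\UR{\alpha}{\beta}{\calB}=\Comma{\Forget_2^{\Comma{\alpha}{\beta}}}{\Id_{\calB}}$, which is built on the forgetful functor of another comma category. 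An object of it is a triple $\paran{F,b',g}$, where $F$ is an object of $\Comma{\alpha}{\beta}$ with $\calB$-marginal $b:=\Forget_2^{\Comma{\alpha}{\beta}}(F)$, where $b'$ is an object of $\calB$, and where $g:b\to b'$ is a morphism of $\calB$ (in the pictorial language of Section~\ref{sec:comma_arr} this is the upper-right corner $\alpha a \xrightarrow{F} \beta b \xrightarrow{\beta g} \beta b'$). A morphism $\paran{F_1,b'_1,g_1}\to\paran{F_2,b'_2,g_2}$ is a pair $\paran{\psi,h}$ with $\psi:F_1\to F_2$ in $\Comma{\alpha}{\beta}$ and $h:b'_1\to b'_2$ in $\calB$ satisfying $h\circ g_1 = g_2\circ \Forget_2^{\Comma{\alpha}{\beta}}(\psi)$. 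Under this description $\Forget_1$ simply discards the pair $\paran{b',g}$ and the component $h$, while $\Forget_1^{(L)}$ sends $F$ to $\paran{F,b,\Id_b}$ and a morphism $\psi$ to $\paran{\psi,\,\Forget_2^{\Comma{\alpha}{\beta}}(\psi)}$ --- the unique lift turning it into a section, and functorial because $\Forget_2^{\Comma{\alpha}{\beta}}$ is. In particular $\Forget_1\circ\Forget_1^{(L)}=\Id_{\Comma{\alpha}{\beta}}$ on the nose.

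Next I would build the counit. For an object $X=\paran{F,b',g}$ one has $\Forget_1^{(L)}\Forget_1 X=\paran{F,b,\Id_b}$, and I set
\[ \epsilon_X := \paran{\Id_F,\; g}\;:\;\paran{F,b,\Id_b}\longrightarrow \paran{F,b',g}. \]
This is a legitimate $\UR{\alpha}{\beta}{\calB}$-morphism because its defining compatibility reduces to $g\circ\Id_b = g\circ\Forget_2^{\Comma{\alpha}{\beta}}(\Id_F)$, which holds. For naturality, take any morphism $\paran{\psi,h}:X_1\to X_2$ of $\UR{\alpha}{\beta}{\calB}$: the two composites $\epsilon_{X_2}\circ\Forget_1^{(L)}\Forget_1\paran{\psi,h}$ and $\paran{\psi,h}\circ\epsilon_{X_1}$ both have first component $\psi$, and their $\calB$-components are $g_2\circ\Forget_2^{\Comma{\alpha}{\beta}}(\psi)$ and $h\circ g_1$ respectively, which coincide precisely because $\paran{\psi,h}$ is a morphism of $\UR{\alpha}{\beta}{\calB}$. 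Hence $\epsilon$ is natural. Finally $\Forget_1(\epsilon_X)=\Id_F$ and $\epsilon_{\Forget_1^{(L)}F}=\paran{\Id_F,\Id_b}$ is an identity arrow, so the two triangle identities hold trivially against the identity unit supplied by $\Forget_1\circ\Forget_1^{(L)}=\Id$; Lemma~\ref{lem:RIRA} then yields $\Forget_1^{(L)}\dashv\Forget_1$, completing the argument.

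The only step that demands genuine care is the bookkeeping in the first paragraph --- correctly reading off the objects and morphisms of an iterated comma category whose left leg is itself a forgetful functor. Once that is pinned down, the verification is formally \emph{easier} than in Lemma~\ref{lem:frgt_adjnt}: there the section used the canonical arrow out of an initial object and relied on that lemma's initial-object-preservation hypothesis, whereas here the section uses the identity arrow $\Id_b$ and needs no hypothesis beyond the bare arrangement \eqref{eqn:abc}. A small sanity check worth recording is the direction of the adjunction: the comparison $\epsilon_X:\paran{F,b,\Id_b}\to\paran{F,b',g}$ is built from $g$ itself and always exists, whereas a transformation in the opposite direction would require a retraction of $g$, which need not exist --- so $\Forget_1^{(L)}$ is genuinely the \emph{left} adjoint, as claimed.
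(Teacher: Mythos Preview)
Your proposal is correct and follows essentially the same route as the paper: both show that $\Forget_1^{(L)}$ is a right inverse of $\Forget_1$, construct the counit $\Forget_1^{(L)}\circ\Forget_1\Rightarrow\Id$ componentwise as $(\Id_F,g)$, and then invoke Lemma~\ref{lem:RIRA}. Your version is simply more explicit --- you unwind the iterated comma category in coordinates, check naturality of $\epsilon$ against a generic morphism $(\psi,h)$, and verify the triangle identities directly --- whereas the paper compresses the same content into a single commutative diagram.
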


\begin{proof} The functor $\Forget_{1}^{(L)}$ is clearly a right inverse of $\Forget_{1}$. Thus again by Lemma \ref{lem:RIRA} , it is enough to show the existence of a natural transformation as shown on the left below :
	\[ \eta : \Forget_1^{(L)} \circ \Forget_1 \Rightarrow \Id , \quad
	\begin{tikzcd} \alpha a \arrow[d, "F"', Akashi] \\ \beta b \end{tikzcd}, \, 
	\begin{tikzcd} b  \arrow[d, "\phi"', Akashi] \\ b' \end{tikzcd} , \; 
	\begin{tikzcd} {}  \arrow[rr, "\eta", mapsto] && {} \end{tikzcd} \;
	\begin{tikzcd}
		& \alpha a \arrow[r, "F", Holud] \arrow[ld, "="] & \beta b \arrow[ld, "="'] \arrow[d, "\beta \Id_b", Holud] & \\
		\alpha a \arrow[r, "F"', Akashi] & \beta b \arrow[d, "\phi"', Akashi] & \beta b \arrow[ld, "\phi"] & \\
		& \beta b' & & 
	\end{tikzcd}\]
	The connecting morphisms of this natural transformation is shown above on the right. 
\end{proof}

\blue{ The reader is once again referred to the outline presented in Figure \ref{fig:outline1}. The main results that remain to be proven are Theorems \ref{thm:1} and \ref{thm:2}. They are proved by a final, complicated diagram presented later in \eqref{eqn:nc0s3}. Lemmas \ref{lem:frgt_adjnt}, \ref{lem:jd9lk3} help in the construction of this diagram. }

We end this section with one final observation about the upper-right corner category. Recall the inclusion functor from \eqref{eqn:0lkd0e}. It leads to a pull-back square :
\[\begin{tikzcd}
	\Comma{\Forget_2^{ \Comma{\alpha}{\beta}}}{b}  \arrow[r, "\subset"] \arrow[d, ""] & \UR{\alpha}{\beta}{\calB} \arrow[d, "\Forget_2"] \\
	\star \arrow[r, "b"'] & \calB
\end{tikzcd}\]
Now consider any functor $\calT: \calP \to \UR{\alpha}{\beta}{\calB}$ such that $\Forget_2 \circ \calT \equiv b$, for some object $b$ of $\calB$. Then the commutation on the left below is satisfied :
\begin{equation} \label{eqn:T_T2_T3}
	\begin{tikzcd}
		\calP \arrow[r, "\calT"] \arrow[d, ""] & \UR{\alpha}{\beta}{\calB} \arrow[d, "\Forget_2"] \\
		\star \arrow[r, "b"'] & \calB
	\end{tikzcd} \imply 
	\begin{tikzcd}
		\calP \arrow[drr, "\calT", bend left=20] \arrow[ddr, "", bend right=20] \arrow[Akashi, dr, "\calT'"] \arrow[Akashi, dd, "\calT''"', dashed]  \\
		& \Comma{\Forget_2^{ \Comma{\alpha}{\beta}}}{b} \arrow[dl, Holud, "\text{Restrict}"] \arrow[r, "\subset"] \arrow[d, ""] & \UR{\alpha}{\beta}{\calB} \arrow[d, "\Forget_2"] \\
		\Comma{\calB}{b} & \star \arrow[r, "b"'] & \calB
	\end{tikzcd}
\end{equation}
This commutation must factor through the pullback square as shown by the blue arrow in the diagram in the middle. Thus given any functor $\calT$ as above,  \eqref{eqn:T_T2_T3} says that $\calT$ factors through a map $\calT'$ mapping into $\Comma{\Forget_2^{ \Comma{\alpha}{\beta}}}{b}$, a subcategory of $\UR{\alpha}{\beta}{\calB}$. Since the category $\Comma{\Forget_2^{ \Comma{\alpha}{\beta}}}{b}$ can be further restricted to the slice $\Comma{\calB}{b}$, $\calT'$ extends to a functor mapping into $\Comma{\calB}{b}$.

\section{The Lim-Pre construction} \label{sec:LimPre}

At this stage we can start constructing the functors declared in Theorem \ref{thm:1} and \eqref{eqn:thm:2}. These will be constructed by taking various limits. For that purpose we need to established the completeness and continuity of various categories and functors involved. We start with a classic result from Category theory :

\begin{lemma} [Right adjoints preserve limits] \label{lem:RAPL}
	\cite[Thm 4.5.3]{Riehl_context_2017} If a functor $F:\calP\to \calQ$ has a left adjoint, then for any diagram $\Psi : J\to \calP$, if $\lim \psi$ exists, then $\lim (F\circ \Psi) = F \paran{ \lim \Psi }$. 
\end{lemma}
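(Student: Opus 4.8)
The plan is to prove this classical fact via the hom-functor characterization of limits, with the adjunction supplying the bridge between $\calP$ and $\calQ$. Write $L \dashv F$, so that $L : \calQ \to \calP$ is the left adjoint, and recall that an object $p \in \calP$ is a limit of $\Psi : J \to \calP$ exactly when the functor $\Hom_{\calP}(-, p)$ is naturally isomorphic to $p' \mapsto \lim_{j \in J} \Hom_{\calP}\paran{p', \Psi j}$, the latter being a limit of sets (equivalently, the set of cones over $\Psi$ with apex $p'$), which always exists. So it suffices to produce, naturally in an object $q \in \calQ$, a bijection between $\Hom_{\calQ}\paran{q, F(\lim \Psi)}$ and the set of cones over $F \circ \Psi$ with apex $q$, and then invoke uniqueness of representing objects.

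The core of the argument is the chain of natural isomorphisms
\begin{align*}
	\Hom_{\calQ}\paran{q, F(\lim \Psi)}
	&\cong \Hom_{\calP}\paran{Lq, \lim \Psi} \\
	&\cong \lim_{j \in J} \Hom_{\calP}\paran{Lq, \Psi j} \\
	&\cong \lim_{j \in J} \Hom_{\calQ}\paran{q, F \Psi j},
\end{align*}
where the first and third steps are the adjunction bijection $\Hom_{\calQ}(q, Fp) \cong \Hom_{\calP}(Lq, p)$, natural in both variables, and the middle step is the universal property of $\lim \Psi$ applied to the apex $Lq$. The composite identifies $\Hom_{\calQ}(q, F(\lim \Psi))$ with the set of cones over $F \circ \Psi$ with apex $q$, naturally in $q$; transporting $\Id_{F(\lim\Psi)}$ through the chain yields a distinguished cone, and the Yoneda lemma then forces $F(\lim \Psi)$ together with that cone to be a limit of $F \circ \Psi$, i.e. $\lim(F \circ \Psi) = F(\lim \Psi)$.

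The step that needs care — and which I expect to be the only real obstacle in writing the argument cleanly — is checking that the middle isomorphism is natural in $q$ and, more importantly, that the full chain is compatible with the structure maps $\Psi(u) : \Psi j \to \Psi j'$ for $u : j \to j'$ in $J$, so that the image of $\Hom_{\calQ}(q, F(\lim\Psi))$ really lands inside the set of cones rather than merely in $\prod_j \Hom_{\calQ}(q, F\Psi j)$. This reduces to the naturality of the adjunction in the $\calP$-variable: post-composition with $F\Psi(u)$ on the $\calQ$-side corresponds under transposition to post-composition with $\Psi(u)$ on the $\calP$-side, so the cone identity $\Psi(u) \circ (\text{leg at } j) = (\text{leg at } j')$, which holds by definition of $\lim\Psi$, transports across. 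Equivalently, one can run the whole thing diagrammatically: given a cone $(\gamma_j : q \to F\Psi j)_j$ over $F\circ\Psi$, transpose each leg to $\tilde\gamma_j : Lq \to \Psi j$, use naturality of the counit to verify $(\tilde\gamma_j)_j$ is a cone over $\Psi$, apply the universal property of $\lim\Psi$ to obtain a unique $Lq \to \lim\Psi$, and transpose back; uniqueness on the $\calP$-side yields uniqueness on the $\calQ$-side. Either route reduces the entire proof to the naturality of the adjunction and the defining universal property of $\lim\Psi$.
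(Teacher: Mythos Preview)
Your proof is correct and is the standard argument for this classical fact. Note, however, that the paper does not supply its own proof of this lemma: it simply cites it as \cite[Thm 4.5.3]{Riehl_context_2017} and moves on. Your hom-set chain via the adjunction isomorphism is exactly the proof given in Riehl, so there is nothing to compare---you have reproduced the cited argument.
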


Consider an arrangement $\calY \xleftarrow{F} \calX \xrightarrow{G} \calZ$. Given an object $y$ of $\calY$, one can create the composite functor :
\begin{equation} \label{eqn:def:preFG}
	\begin{tikzcd}
		\Comma{F}{y} \arrow[dashed]{rr}{ \green{ \Pre{F}{G}(y) } } \arrow{d}[swap]{ \Forget_{ \Comma{F}{y} } } && Z \\
		X \times \{\star\} \arrow{rr}{\cong} && X \arrow{u}[swap]{ G }
	\end{tikzcd}
\end{equation}
Recall that $\Comma{ \CatCat }{\calZ}$ is the left-slice of $\calZ$ in the category $\CatCat$ of small categories. Its objects are thus all possible functors with codomain $\calZ$. The construction \eqref{eqn:def:preFG} thus gives us a functor
\begin{equation} \label{eqn:def:Pre}
	\Pre{F}{G} : \calY \to \Comma{ \CatCat }{\calZ}, \; y\mapsto \paran{ \Comma{F}{y}, \Pre{F}{G}(y) } ;\;  
	\begin{tikzcd} y \arrow{d}{\psi} \\ y' \end{tikzcd} \,\mapsto\, 
	\begin{tikzcd}
		\Comma{F}{y} \arrow[bend left=20]{drr}{ \Pre{F}{G}(y) } \arrow[bend right=10]{dr}{ \Forget_{ \Comma{F}{y} } } \arrow{dd}{\psi\circ}\\
		& X \arrow{r}{G} & Z\\
		\Comma{F}{y'} \arrow[bend left=10]{ur}[swap]{ \Forget_{ \Comma{F}{y'} } } \arrow[bend right=20]{urr}[swap]{ \Pre{F}{G}(y') }
	\end{tikzcd}
\end{equation}
Now suppose that $\calZ$ is a complete category. The collection of all diagrams in $\calZ$, which are functors $F:J\to \calZ$ is the left slice of $\calZ$ within $\CatCat$ the category of small categories. We then have the following result from basic category theory :

\begin{lemma} \label{lem:id30l}
	Given a complete category $\calZ$, there is a functor $\lim : \Comma{\CatCat}{\calZ} \to \calZ$ which maps each diagram $F:J\to \calZ$ into $\lim F$.
\end{lemma}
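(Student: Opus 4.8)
The plan is to build $\lim$ on objects and on morphisms of $\Comma{\CatCat}{\calZ}$ separately, reducing every verification to the universal property of a limit. On objects there is essentially nothing to do: an object is a small category $J$ together with a diagram $F : J \to \calZ$, and completeness of $\calZ$ furnishes a limiting cone. I would fix once and for all a choice of limit cone $\paran{ p^F_j : \lim F \to F j }_{j \in J}$ for every such $F$ — a global selection, needed only to make the assignment strictly, rather than pseudo-, functorial — and declare the image of $(J,F)$ to be the vertex $\lim F$.

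For morphisms, recall that a morphism $(J,F) \to (K,G)$ in $\Comma{\CatCat}{\calZ}$ is a functor $H : J \to K$ with $G \circ H = F$. The slickest route is through the Yoneda lemma: for each object $c$ of $\calZ$ the cones from $c$ to a diagram $F : J \to \calZ$ form the set $\lim_{j} \Hom_{\calZ}(c, F j)$, a limit taken in $\SetCat$ and hence always available, and the limiting cone exhibits $\lim F$ as a representing object, $\Hom_{\calZ}(c, \lim F) \cong \mathrm{Cone}(c, F)$ naturally in $c$. Precomposition with $H$ yields a restriction of cones $\mathrm{Cone}(c, G) \to \mathrm{Cone}(c, G H) = \mathrm{Cone}(c, F)$, natural in $c$; Yoneda then converts this transformation of representables into a unique morphism $\lim(H) : \lim G \to \lim F$ in $\calZ$, characterised by $p^F_j \circ \lim(H) = p^G_{H j}$ for all $j \in J$. (Without Yoneda one argues directly: $\paran{ p^G_{H j} }_{j}$ is a cone over $F$ because $F u = G(H u)$ for every arrow $u$ of $J$, so the universal property of $p^F$ supplies the mediating arrow.)

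Functoriality is then automatic. Restriction along $\Id_J$ is the identity map of cones, so $\lim$ preserves identities; restriction along a composite $H' \circ H$ is restriction along $H$ followed by restriction along $H'$, so $\lim(H' \circ H) = \lim(H) \circ \lim(H')$ by the uniqueness half of Yoneda (equivalently, of the universal property). Since the object part was fixed to be $F \mapsto \lim F$, this is exactly the functor asserted, and it is the one that is later postcomposed with $\Pre{F}{G}$ in \eqref{eqn:def:Pre}.

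I do not anticipate a genuine obstacle: as the author notes, this is basic category theory, and the universal property (or Yoneda) disposes of all the bookkeeping. The only two points deserving a line of care are (i) the global choice of limit cones — without it one obtains a functor defined only up to canonical isomorphism, which is harmless but worth flagging; and (ii) keeping the variance straight, i.e. that a slice-morphism coming from a functor $J \to K$ over $\calZ$ induces the morphism $\lim G \to \lim F$ in the direction indicated above. The would-be shortcut of invoking "$\lim$ is right adjoint to the constant-diagram functor" does not directly apply, since here the indexing category $J$ is allowed to vary; that adjointness is the special case of a fixed $J$, and the present lemma is precisely the statement that these fit together coherently as $J$ ranges over $\CatCat$.
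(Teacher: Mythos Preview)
The paper does not prove this lemma at all; it is introduced as ``a result from basic category theory'' and used without argument. Your proof is correct and is exactly the standard one: choose limit cones globally, define the action on a slice-morphism $H:J\to K$ (with $GH=F$) by restricting the limiting cone of $G$ along $H$ to obtain a cone over $F$, and let the universal property supply the mediating arrow; functoriality is forced by uniqueness.

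Your variance remark is well taken and worth emphasising. As you compute, the induced map runs $\lim G \to \lim F$, so the assignment is \emph{contravariant} on $\Comma{\CatCat}{\calZ}$; strictly speaking one obtains a functor $(\Comma{\CatCat}{\calZ})^{op} \to \calZ$. The paper's statement is silent on this point, and the subsequent construction $\LimPre_{F,G}$ in \eqref{eqn:LimPre} inherits the same reversal. This is not a defect in your argument --- it is the correct behaviour of limits under reindexing --- but it is a wrinkle in the paper's phrasing that you have correctly flagged rather than glossed over.
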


For a complete category $\calZ$, the $\lim$ functor can be used to extend the functor from \eqref{eqn:def:Pre} into the dashed green arrow as shown below : 
\begin{equation} \label{eqn:LimPre}
	\begin{tikzcd}
		\calY \arrow[drr, Shobuj, "\LimPre_{F,G}"', dashed] \arrow[rr, "\Pre{F}{G}"] && \Comma{ \CatCat }{Z} \arrow[d, "\lim"] \\
		&& \calZ
	\end{tikzcd}
\end{equation}
This construction $\LimPre_{F,G}$ is one of the main innovations in this paper. The use of colimits instead of limits would have yielded the right Kan extension of $F$ along $G$. for any object $y$ of $\calY$, $\LimPre_{F,G}(y)$ is the limit point of the functor $G$ restricted to the left slice $\Comma{F}{y}$. 

Given two complete categories $\calZ, \calZ'$ a functor $\Psi : \calZ \to \calZ'$ is called \emph{limit preserving} if the following commutation holds : 
\begin{equation} \label{eqn:svo02l}
	\begin{tikzcd}
		\Comma{\CatCat}{\calZ} \arrow[d, "\lim"'] \arrow[r, "F\circ"] & \Comma{\CatCat}{\calZ'} \arrow[d, "\lim"] \\
		\calQ \arrow[r, "F"] & \calZ'
	\end{tikzcd}
\end{equation}
\blue{The $\LimPre$ construction involves two functors $R,Q$ with the same domain category. The second functor $Q$ may be extended to a different codomain by composition with some functor $F$. The next lemma presents a simple condition under which the $\LimPre$ applied to a composition of $Q$ with $F$ coincides with the composition of $\LimPre_{R,Q}$ with $F$.}

\begin{lemma} \label{lem:LimPre:extend}
	Given an arrangement of functors 
	\[\begin{tikzcd}
		\calR & \calP \arrow[r, "Q"] \arrow[l, "R"'] & \calQ \arrow[r, "F"] & \calQ'
	\end{tikzcd}\]
	in which $F$ is limit preserving, the following commutation holds between the $\LimPre$ functors :
	\[\begin{tikzcd}
		\calP \arrow[rrr, bend left=30, "F\circ Q"] \arrow[rr, "Q"'] \arrow[d, "R"'] && \calQ \arrow[r, "F"'] & \calQ' \\
		\calR \arrow[rrr, bend right=30, "\LimPre_{R,F\circ Q}"'] \arrow[rr, "\LimPre_{R,Q}"] && \calQ \arrow[r, "F"] & \calQ'
	\end{tikzcd}\]
\end{lemma}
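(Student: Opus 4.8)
The plan is to peel off the two layers in the definition of $\LimPre$ and to invoke the hypothesis on $F$ exactly once. By \eqref{eqn:LimPre} we have $\LimPre_{R,Q} = \lim \circ \Pre{R}{Q}$ with $\lim : \Comma{\CatCat}{\calQ} \to \calQ$, and likewise $\LimPre_{R,F\circ Q} = \lim \circ \Pre{R}{F\circ Q}$ with $\lim : \Comma{\CatCat}{\calQ'} \to \calQ'$; both $\lim$ functors are defined because $\calQ$ and $\calQ'$ are complete, which is part of the meaning of ``$F$ is limit preserving''. Hence it suffices to prove the single strict identity of functors $\calR \to \Comma{\CatCat}{\calQ'}$,
\[
	\Pre{R}{F\circ Q} \;=\; (F\circ -)\circ\Pre{R}{Q} ,
\]
since then, using the limit-preservation square \eqref{eqn:svo02l} for $F$,
\[
	\LimPre_{R,\,F\circ Q} \;=\; \lim\circ\Pre{R}{F\circ Q} \;=\; \lim\circ(F\circ-)\circ\Pre{R}{Q} \;=\; F\circ\lim\circ\Pre{R}{Q} \;=\; F\circ\LimPre_{R,Q} ,
\]
which is exactly the asserted commutation.

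To establish the displayed identity I would argue straight from the construction \eqref{eqn:def:preFG}--\eqref{eqn:def:Pre}. For an object $r$ of $\calR$, the comma category $\Comma{R}{r}$ together with its forgetful functor $\Forget : \Comma{R}{r} \to \calP$ depends only on $R$ and $r$, not on $Q$. Therefore $\Pre{R}{F\circ Q}(r)$ is, by definition, the composite $\Comma{R}{r} \xrightarrow{\Forget} \calP \xrightarrow{F\circ Q} \calQ'$, which by associativity of functor composition equals $F$ post-composed with $\Comma{R}{r} \xrightarrow{\Forget} \calP \xrightarrow{Q} \calQ$, i.e.\ with $\Pre{R}{Q}(r)$. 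On a morphism $r\xrightarrow{\psi} r'$ of $\calR$, both $\Pre{R}{Q}$ and $\Pre{R}{F\circ Q}$ act through the same reindexing functor $\Comma{R}{r} \to \Comma{R}{r'}$ (post-composition with $\psi$) placed underneath the respective diagram functors, and post-composing those diagrams with $F$ commutes with this reindexing; so the two assignments agree on morphisms too, and the identity holds on the nose, not merely objectwise.

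I do not expect a genuine obstacle. The only point that calls for a moment's care is the upgrade from ``equal on objects'' to ``equal as functors'' in the identity above --- that is, checking that $\Pre{R}{-}$ is functorial in a way compatible with post-composition by $F\circ-$ --- and this is immediate once one notes that the comma categories and forgetful functors occurring in the $\Pre$ construction do not involve $Q$ at all. Everything else is substitution into the definitions together with the single application of \eqref{eqn:svo02l}.
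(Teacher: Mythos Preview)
Your argument is correct and follows essentially the same route as the paper: unwind $\LimPre$ as $\lim\circ\Pre{R}{-}$, observe that $\Pre{R}{F\circ Q}(y)=F\circ\Pre{R}{Q}(y)$ because the comma category $\Comma{R}{y}$ and its forgetful functor are independent of $Q$, and then apply limit preservation of $F$. The paper's proof checks this only objectwise, so your extra paragraph verifying agreement on morphisms of $\calR$ is a welcome refinement rather than a different approach.
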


\begin{proof} Note that for every $y\in ob(\calY)$, 
	\[\begin{split}
		\LimPre_{R,F\circ Q} (y) &= \lim \Pre{R}{F\circ Q}(y) = \lim \Pre{R}{F\circ \Pre{R}Q}(y) \\
		& = F\circ \lim \Pre{R}{\Pre{R}Q}(y) = F\circ \LimPre_{R,Q} (y) ,
	\end{split}\]
	where the second last inequality holds from the limit preservation property.
\end{proof}

\blue{ The reader is once again referred to the outline presented in Figure \ref{fig:outline1}. The statement of Lemma \ref{lem:LimPre:extend} is essentially a commutation. This commutation will be seen to occur multiple times in the diagram presented later in \eqref{eqn:nc0s3}. This diagram is the final step towards proving Theorems \ref{thm:1} and \ref{thm:2}. At this point we are ready to begin the proof. }

\section{Proof of Theorems \ref{thm:1} and \ref{thm:2}} \label{sec:proof:2}

The proofs of the two theorems shall be derived simultaneously.
We begin the proof by drawing a portion of \eqref{eqn:ArrowComma3}. 
\[\begin{tikzcd} []
	\ArrowCat{ \Comma{\alpha}{\beta} } \arrow[dd, "DL"'] \arrow[rr, "UR", Shobuj] \arrow[bend left = 30, Holud, "\Forget_1", rrrr] && \UR{\alpha}{\beta}{\calB} \arrow[rr, "\Forget_1"] && \Comma{\alpha}{\beta}  \\
	&& && &&  \\
	\DL{\alpha}{\beta}{\calA} 
\end{tikzcd}\]
Since the categories $\Comma{\alpha}{\beta}$ and $\UR{\alpha}{\beta}{\calB}$ are complete by Lemmas \ref{lem:comma_cocmplt} and \ref{lem:UR_cmplt} respectively, we can create the $\LimPre$ constructions of these functors, as shown below : 
\[\begin{tikzcd} []
	\ArrowCat{ \Comma{\alpha}{\beta} } \arrow[dd, "DL"'] \arrow[rr, "UR", Shobuj] \arrow[bend left = 30, Holud, "\Forget_1", rrrr] && \UR{\alpha}{\beta}{\calB} \arrow[rr, "\Forget_1"] && \Comma{\alpha}{\beta}  \\
	&& && &&  \\
	\DL{\alpha}{\beta}{\calA} \arrow[rrrr, bend right=20, Holud, dotted, "\gamma_F"'] \arrow[rr, "\bar{\gamma}_F"', Shobuj, dotted] && \UR{\alpha}{\beta}{\calB} \arrow[rr, "\Forget_1"] && \Comma{\alpha}{\beta} 
\end{tikzcd}\]
Each colored dotted arrow is the $\LimPre$ construction corresponding to the functor of the same color on the top row. The commutation between the $\LimPre$ constructions in the second row holds by Lemma \ref{lem:frgt_adjnt}, Lemma \ref{lem:jd9lk3} and Lemma \ref{lem:LimPre:extend}. We now add some of the peripheral commutations of \eqref{eqn:ArrowComma3} to get
\[\begin{tikzcd} [row sep = large, scale cd = 0.6]
	\ArrowCat{ \Comma{\alpha}{\beta} } \arrow[dd, "DL"'] \arrow[rr, "UR", Shobuj] \arrow[bend left = 30, Holud, "\Forget_1", rrrr] && \UR{\alpha}{\beta}{\calB} \arrow[drrrr, "\Forget_2"'] \arrow[rr, "\Forget_1"] && \Comma{\alpha}{\beta} \arrow[drr, "\Forget_2"] \\
	&& && && \calB \\
	\DL{\alpha}{\beta}{\calA} \arrow[ddrr, "\Forget_2"', bend right=20] \arrow[rrrr, bend right=20, Holud, dotted, "\gamma_F"'] \arrow[rr, "\bar{\gamma}_F"', Shobuj, dotted] && \UR{\alpha}{\beta}{\calB} \arrow[rr, "\Forget_1"] && \Comma{\alpha}{\beta} \\ \\
	&& \Comma{\alpha}{\beta} \arrow[uuurrrr, "\Forget_2"', bend right=20] \\
\end{tikzcd}\]
Again, by Lemma \ref{lem:frgt_adjnt} and Lemma \ref{lem:LimPre:extend} we can fill in :
\[\begin{tikzcd} [row sep = large, scale cd = 0.6]
	\ArrowCat{ \Comma{\alpha}{\beta} } \arrow[dd, "DL"'] \arrow[rr, "UR", Shobuj] \arrow[bend left = 30, Holud, "\Forget_1", rrrr] && \UR{\alpha}{\beta}{\calB} \arrow[drrrr, "\Forget_2"'] \arrow[rr, "\Forget_1"] && \Comma{\alpha}{\beta} \arrow[drr, "\Forget_2"] \\
	&& && && \calB \\
	\DL{\alpha}{\beta}{\calA} \arrow[rrrr, bend right=20, Holud, dotted, "\gamma_F"'] \arrow[ddrr, "\Forget_2"', bend right=20] \arrow[rr, "\bar{\gamma}_F"', Shobuj, dotted] && \UR{\alpha}{\beta}{\calB} \arrow[rr, "\Forget_1"] \arrow[urrrr, "\Forget_2"] && \Comma{\alpha}{\beta} \arrow[urr, "\Forget_2"'] \\ \\
	&& \Comma{\alpha}{\beta} \arrow[uuurrrr, "\Forget_2"', bend right=20] \\
\end{tikzcd}\]
We now add the functor $\MapC{F}$ from \eqref{eqn:def:MapC} to this diagram : 
\[\begin{tikzcd} [row sep = large]
	\ArrowCat{ \Comma{\alpha}{\beta} } \arrow[dd, "DL"'] \arrow[rr, "UR", Shobuj] \arrow[bend left = 30, Holud, "\Forget_1", rrrr] && \UR{\alpha}{\beta}{\calB} \arrow[drrrr, "\Forget_2"'] \arrow[rr, "\Forget_1"] && \Comma{\alpha}{\beta} \arrow[drr, "\Forget_2"] \\
	&& && && \calB \\
	\DL{\alpha}{\beta}{\calA} \arrow[rrrr, bend right=20, Holud, dotted, "\gamma_F"'] \arrow[ddrr, "\Forget_2"', bend right=20] \arrow[rr, "\bar{\gamma}_F"', Shobuj, dotted] && \UR{\alpha}{\beta}{\calB} \arrow[rr, "\Forget_1"] \arrow[urrrr, "\Forget_2"] && \Comma{\alpha}{\beta} \arrow[urr, "\Forget_2"'] \\ \\
	&& \Comma{\alpha}{\beta} \arrow[uuurrrr, "\Forget_2"', bend right=20] \\
	\Comma{\calA}{a}  \arrow[Itranga]{uuu}{ \MapC{F} } \arrow[rr] && \star \arrow[u, Itranga, "F"] \arrow[uuuurrrr, Itranga, "b"', bend right=30] && 
\end{tikzcd}\]
Now set $\bar{\tau_F} = \bar{\gamma}_F \circ \MapC{F}$ and $\tau_F = \gamma_F \circ \MapC{F}$. Note that this creates a commutation :
\begin{equation} \label{eqn:BarTau_b}
	\begin{tikzcd}
		\Comma{\calA}{a} \arrow[rr, "\bar{\tau}_F"] \arrow[d, ""] && \UR{\alpha}{\beta}{\calB} \arrow[d, "\Forget_2"] \\
		\star \arrow[rr, "b"'] && \calB
	\end{tikzcd}
\end{equation}
This is precisely the commutation described on the left of \eqref{eqn:T_T2_T3}. Thus 
the conclusions of \eqref{eqn:T_T2_T3}  along with \eqref{eqn:0lkd0e} hold and we get :
\begin{equation} \label{eqn:nc0s3}
	\begin{tikzcd} [row sep = large, scale cd = 0.8]
		\ArrowCat{ \Comma{\alpha}{\beta} } \arrow[dd, "DL"'] \arrow[rr, "UR", Shobuj] \arrow[bend left = 30, Holud, "\Forget_1", rrrr] && \UR{\alpha}{\beta}{\calB} \arrow[drrrr, "\Forget_2"'] \arrow[rr, "\Forget_1"] && \Comma{\alpha}{\beta} \arrow[drr, "\Forget_2"] \\
		&& && && \calB \\
		\DL{\alpha}{\beta}{\calA} \arrow[rrrr, bend right=20, Holud, dotted, "\gamma_F"'] \arrow[ddrr, "\Forget_2"', bend right=20] \arrow[rr, "\bar{\gamma}_F"', Shobuj, dotted] && \UR{\alpha}{\beta}{\calB} \arrow[rr, "\Forget_1"] \arrow[urrrr, "\Forget_2"] && \Comma{\alpha}{\beta} \arrow[urr, "\Forget_2"'] \\ \\
		&& \Comma{\alpha}{\beta} \arrow[uuurrrr, "\Forget_2"', bend right=20] & \Comma{\Forget_2^{ \Comma{\alpha}{\beta}}}{b} \arrow[dr, Akashi, "\text{Restrict}"] \arrow[luu, Akashi, "\subseteq"] \\
		\Comma{\calA}{a} \arrow[Akashi, rrru, bend right=40, "\bar{\tau}_F"'] \arrow[rrrr, bend right=30, Shobuj, "\Dyn_F"'] \arrow[Itranga]{uuu}{ \MapC{F} } \arrow[rr] && \star \arrow[u, Itranga, "F"] \arrow[uuuurrrr, Itranga, "b"', bend right=30] && \Comma{\calB}{b} \arrow[uuuurr, "\Forget_2"', bend right=30]
	\end{tikzcd}
\end{equation}
The commutations in \eqref{eqn:thm:2} are included within the commutation of \eqref{eqn:nc0s3}. The claim of minimality in Theorem \ref{thm:1} follows from the construction of $\gamma_F$ as a limit. The commutation diagram in \eqref{eqn:BarTau_b} links this minimal comma object to minimal commutation squares completing $\begin{tikzcd} \alpha a' \arrow[d, "\alpha f"'] \\ \alpha a \arrow[r, "F"'] & \beta b \end{tikzcd}$. 
This completes the proof of Theorems \ref{thm:1} and \ref{thm:2}. \qed 

\bibliographystyle{\Path unsrt}
\bibliography{\Path References,Ref}
\end{document}